\documentclass[epic,eepic,11pt]{amsart}

\usepackage{amsfonts,mathrsfs}
\usepackage[mathscr]{eucal}
\usepackage{mathtools}
\usepackage{amssymb}
\usepackage{amscd}
\usepackage{fancyhdr}
\usepackage[all,cmtip]{xy}



\usepackage{euler,eucal}

\pagestyle{fancy} \fancyhf{} \rhead{\thepage}

\DeclareMathAlphabet{\mathbf}{T1}{ppl}{bx}{n}
\DeclareMathAlphabet{\mathrm}{T1}{ppl}{m}{n}



\numberwithin{equation}{section}

\newcommand\note[1]%
{$^\dagger$\marginpar{\footnotesize{$^\dagger${#1}}}}

\def\({\left(}
\def\){\right)}
\def\<{\left<}
\def\>{\right>}


\newtheorem{theorem}{Theorem}[section]
\newtheorem{proposition}[theorem]{Proposition}
\newtheorem{lemma}[theorem]{Lemma}
\newtheorem{definition}[theorem]{Definition}

\theoremstyle{definition}

\newtheorem{remark}[theorem]{Remark}



\newcommand\bb[1]{{\text{\bf#1}}}

\newcommand\Z{\bb{Z}}

\newcommand\R{\mathbb{R}}


%
\newcommand     {\comment}[1]   {}
\newcommand{\mute}[2] {}
\newcommand     {\printname}[1] {}



\newcommand\funclim[1]{\operatorname*{\mathrm{#1}}}

\renewcommand\lim{\funclim{lim}}






\newcommand\sur{\mathrel{\to\kern-1.8ex\to}}
\newcommand\iso{\mathrel{\hookrightarrow\kern-1.8ex\to}}

\newcommand\longhookrightarrow{\lhook\joinrel\longrightarrow}

\newcommand\longsur{\mathrel{\longrightarrow\kern-1.8ex\to}}
\newcommand\longiso{\mathrel{\longhookrightarrow\kern-1.8ex\to}}





\begin{document}

\bibliographystyle{amsalpha}
\date{\today}

\title{Symplectic Harmonic theory and the Federer-Fleming deformation theorem}

\author{Yi Lin }

 \dedicatory{Dedicated to Late Professor H. Federer with admiration.}
 \date{\today}

\maketitle
\begin{abstract} In this article, we initiate a geometric measure theoretic approach to symplectic Hodge theory. In particular, we apply one of the central results in geometric measure theory, the Federer-Fleming deformation theorem,  together with the cohomology theory of
normal currents on a differential manifold, to establish a fundamental property on symplectic Harmonic forms. We show that on a closed symplectic manifold, every real primitive cohomology class of positive degrees admits a symplectic Harmonic representative not supported on the entire manifold. As an application, we use it to investigate the support of symplectic Harmonic representatives of Thom classes, and give a complete solution to an open question asked by Guillemin.


\end{abstract}



\setcounter{section}{0} \setcounter{subsection}{0}

\section{Introduction}

Symplectic Hodge theory was introduced by Ehresmann and Libermann \cite{EL49}, \cite{L55}, and was rediscovered by Brylinski \cite{brylinski;differential-poisson}. By mimicking the construction in Riemannian Hodge theory, one can define the symplectic Hodge star operator $\star$. In this context, a differential form $\alpha$ is said to be symplectic Harmonic if and only if $d\alpha=d^{\Lambda} \alpha=0$, where $d^{\Lambda}=\pm \star d \star$ is the symplectic Hodge adjoint operator.


In contrast with the usual Riemannian Hodge theory, symplectic Hodge theory is not associated with elliptic operators. As a result, Harmonicity is a much flabbier property in symplectic Harmonic theory. One does not expect the Harmonic representative of a Thom class to exhibit any interesting global features. However, Bahramgiri showed in his MIT thesis \cite{Ba06} that this is not the case.

Indeed, Bahramgiri proved \cite{Ba06} that the Thom class of a compact oriented coisotropic submanifold of a symplectic manifold always admits a Harmonic representative not supported on the entire manifold. This stands in contrast with Riemannian Hodge theory, where any Harmonic form with a zero of infinite order is identically zero, cf. \cite{AKS62}. In addition, Bahramgiri also proved that any symplectic Harmonic representative of the Thom class of a symplectic submanifold is nowhere vanishing. This motivated Victor Guillemin to ask the following fundamental question in symplectic Hodge theory.\\

\textbf{ Question}:  What can we say about the support of symplectic Harmonic representatives of the Thom classes of isotropic submanifolds? More generally, can we give a characterization of the submanifolds of a symplectic manifold whose Thom class admits a symplectic Harmonic representative that is not supported on the entire manifold?\\

In a different direction, motivated by Chern Bott cohomology in complex geometry and by string theory, L. Tseng and S. T. Yau studied cohomology theories on symplectic manifolds in recent works \cite{TY09}, \cite{TY10}, \cite{TY11}. They discovered a remarkable fact that on a symplectic manifold there is an elliptic complex on the space of primitive differential forms, which is defined using only the symplectic structure. They went on to develop primitive cohomology theories naturally associated with this elliptic complex, and showed by examples that these cohomologies naturally gave rise to new symplectic invariants especially for non-K\"ahler symplectic manifolds.

In addition, Tseng and Yau \cite{TY09} proposed a definition of primitive homology using coisotropic chains, and proved that there is a natural homomorphism from the primitive homology to the primitive cohomology. In view of this result,  one naturally wonders whether or not every primitive cohomology class is represented by a coisotropic cycle (defined in an appropriate sense).

On a $2n$ dimensional symplectic manifold $(M,\omega)$, a cohomology class $\gamma \in H^p(M,\R)$
is said to be primitive if and only if $[\omega^{n-p+1}]\cup \gamma=0$, where $0\leq p\leq n$. D. Yan \cite{Yan96} had a simple algebraic proof that a primitive cohomology class always admits a symplectic Harmonic representative, even if the symplectic manifold does not satisfy the Hard Lefschetz property.

The Thom classes of compact oriented coisotropic submanifolds provide important examples of primitive cohomology classes of positive degrees. Moreover, Bahramgiri \cite{Ba06} proved that these classes are represented by Harmonic forms not supported on the entire manifold. It naturally leads to the following question concerning symplectic Harmonic forms. \vskip 2mm
 {\bf Question 1}:  On a closed symplectic manifold, is every primitive cohomology class of positive degrees represented by a symplectic Harmonic form not supported on the entire manifold? \vskip 2mm

 It is well known that one can smoothfy a closed De Rham current to get a closed differential form in the same cohomology class. On a symplectic manifold, using Bahramgiri's construction of symplectic smoothing operator \cite{Ba06}, one can smoothfy a closed primitive current and get a closed primitive differential form in the same cohomology class. Since a closed primitive differential form is automatically symplectic Harmonic, Question 1 has the following equivalent form.
 \vskip 2mm
  {\bf Question 2}: On a closed symplectic manifold, is every primitive cohomology class of positive degrees represented by a closed primitive De Rham current not supported on the entire manifold?
  \vskip 2mm

Indeed, Tseng and Yau \cite{TY10} showed that the canonical current of a compact oriented submanifold is primitive if and only if the submanifold is coisotropic. Clearly, canonical currents of submanifolds can never be supported on the entire manifold. In view of these observations, a positive answer to Question 2 indicates that every primitive cohomology class of positive degrees is represented by a coisotropic cycle in some weak sense.

In the literature, similar questions have been considered by R. Schoen and J. Wolfson in the context of Lagrangian homology classes. In a series of highly influential papers \cite{SW99}, \cite{SW01},  Schoen and Wolfson studied the Lagrangian plateau problem in the parametric setting.  As a preliminary consideration, they characterized the Lagrangian homology classes of a symplectic manifold.

On a $2n$ dimensional symplectic manifold $(M,\omega)$, an $n$ dimensional integral homology class $[\alpha]\in H_n(M,\Z)$ is called Lagrangian if it can be represented by a Lagrangian cycle. More precisely, an integral cycle is called Lagrangian if it is represented by n-simplicies that are the images of piecewise $C^1$ Lagrangian maps. It was shown for $n=2$ in \cite{SW01}, for $n=3$  in \cite{W00} when $M$ is simply connected, and for arbitrary $n$ in \cite{W04} when $[\omega]$ is integral, that an integral homology class $[\alpha]\in H_n(M,\Z)$ is Lagrangian if and only if $[\omega]\cap [\alpha]=0$. Equivalently, an integral homology class is Lagrangian if and only if its Poincar\'e dual is a primitive cohomology class.

It is important to note that the existing works on Lagrangian homology classes all depend on the H principles \cite{Gro86} for Lagrangian or contact immersions. For instance, Wolfson \cite{W04} uses successively a horizontal extension lemma due to Gromov \cite[3.5]{Gro96}. However, for simple dimensional reasons, these $H$-principle related techniques do not seem to have an extension to the coisotropic setting.

Inspired by the above-mentioned pioneering work of Tseng and Yau, and by Guillemin's question concerning symplectic Harmonic representatives of Thom classes,  we develop in the present paper a new approach to symplectic Hodge theory via geometric measure theory.
In particular, we apply one of the central analytic tools in geometric measure theory, the Federer-Fleming deformation theorem, together with the cohomology theory of normal currents on a differential manifold, to answer Question 2 in the affirmative. This immediately implies the following result.
\begin{theorem}\label{main-result1} Let $M$ be a closed symplectic manifold. Then every primitive cohomology class of positive degrees on $M$ admits a symplectic Harmonic representative not supported on the entire manifold.
\end{theorem}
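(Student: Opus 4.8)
The plan is to reduce the theorem, exactly as the discussion preceding it does, to a positive answer to Question 2: it suffices to produce, for a given primitive class $\gamma\in H^p(M;\R)$ with $1\le p\le n$, a closed primitive De Rham current representing $\gamma$ whose support is not all of $M$. Applying Bahramgiri's symplectic smoothing operator (which preserves closedness and primitivity, fixes the cohomology class, and enlarges supports by an arbitrarily small amount) then yields a closed primitive, hence symplectic Harmonic, form whose support is still not all of $M$. I would phrase the problem in the dual homological picture: via Poincar\'e duality $\gamma$ corresponds to a class in degree $2n-p$, and by the cohomology theory of normal currents this class is represented by a normal current $T$. The remaining task is to deform $T$ so that it becomes primitive and acquires non-full support without leaving its cohomology class.

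First I would fix a Darboux-adapted cubical decomposition of $M$ (cover $M$ by Darboux charts and lay down fine coordinate grids) and apply the Federer--Fleming deformation theorem to push $T$ onto the $(2n-p)$-skeleton. This produces a polyhedral current $P$ homologous to $T$, together with the deformation identity $T-P=\partial S+R$ with $S,R$ of controlled mass and support. Since $\supp(P)$ lies in a subcomplex of codimension $p\ge 1$, already $\supp(P)\ne M$, and $P$ represents $\gamma$. The homotopy $S$ keeps the class fixed and is the device I would later use to repair the defects introduced below.

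The hard part will be reconciling primitivity with closedness. By the theorem of Tseng--Yau a flat cell current is primitive exactly when the cell is coisotropic, and a generic coordinate $(2n-p)$-plane is not coisotropic, so $P$ need not be primitive. Here the Darboux-adapted grid is essential: a coordinate $(2n-p)$-plane is coisotropic precisely when its defining coordinate set meets every conjugate pair $(x_i,y_i)$ — for instance $\mathrm{span}(x_1,\dots,x_n,y_1,\dots,y_{n-p})$ — and I would arrange the Federer--Fleming projections, which are performed one coordinate direction at a time, so as to sweep the current onto such coisotropic faces. Equivalently, one may apply the pointwise algebraic primitive projection $\Pi$, a universal polynomial in $L=\omega\wedge\cdot$ and $\Lambda$, which has the crucial feature $\supp(\Pi P)\subseteq\supp(P)$, and then correct the resulting failure of closedness. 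The obstruction is genuine, since $d$ does not commute with $\Pi$: writing $P-\Pi P=LQ$ one computes $d(\Pi P)=-L\,dQ$, so the error lies in the image of $L$. The splitting $d=\partial_+ + L\,\partial_-$ of $d$ on primitive currents then reduces the correction to finding a primitive, support-controlled current $\eta$ with $\partial_+\eta=0$ and $\partial_-\eta=dQ$, whose solvability I expect to follow from the primitivity of $\gamma$ together with the deformation $S$ and the symplectic $\mathfrak{sl}_2$-structure. Producing an output that is simultaneously closed, primitive, and of non-full support is the heart of the matter; once it is achieved, the smoothing step of the first paragraph completes the proof.
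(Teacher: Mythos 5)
Your reduction to Question 2 via Bahramgiri's smoothing operator, and the passage to a polyhedral representative $Q$ of the Poincar\'e dual class, both match the paper (the paper obtains $Q$ directly from de Rham's duality theorem, so your preliminary Federer--Fleming pass on the cycle itself is unnecessary). But the heart of the matter --- producing something simultaneously closed, primitive, and of non-full support --- is precisely where both of your routes have genuine gaps. The Darboux-grid idea does not work as stated: the Federer--Fleming deformation pushes a current onto the \emph{entire} $(2n-p)$-skeleton by radial projections from generic centers of all cells, and gives you no mechanism for steering the mass onto a preferred subfamily of coisotropic faces; confining a cycle to the coisotropic subcomplex of a Darboux grid is essentially the coisotropic realization problem itself, which the introduction notes is inaccessible to the existing ($H$-principle) techniques and which the theorem is designed to settle only in a weak, currential sense. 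Your second route is honest about its gap: the correction equation (finding a primitive, \emph{support-controlled} $\eta$ with $\partial_-\eta=dQ$) is left unsolved, and solving such an equation by cohomological means would produce a solution with no support control, which defeats the purpose.

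The paper avoids both obstructions by never attempting to make the cycle itself primitive, whether by positioning or by projection. Instead it exploits the primitivity of the class one degree up: since $[\omega^{n-p+1}\wedge\alpha]=0$, one has $\omega^{n-p+1}\wedge Q=d\Gamma$, and by the isomorphism $H^{*}_{\mathrm{nor}}(M)\cong H^{-\infty,*}_{c}(M)$ (Theorem \ref{De-Rahm-2}) the coboundary $\Gamma$ may be taken to be a \emph{normal} current of dimension $p-1$. The deformation theorem is then applied to $\Gamma$, not to $Q$, yielding $\omega^{n-p+1}\wedge Q=d\left(\pi_{*}(P)+\pi_{*}(S)\right)$ with $\pi_{*}(P)$ a $(p-1)$-dimensional Lipschitz chain (support of Hausdorff dimension at most $p-1$) and $\pi_{*}(S)$ supported in the prescribed neighborhood $W$ of $\mathrm{supp}\,Q$. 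Surjectivity of $L^{n-p+1}$ on compactly supported currents (Proposition \ref{lefschetz-decomposition-for-compact-currents}) gives $B$ with $\omega^{n-p+1}\wedge B=\pi_{*}(P)+\pi_{*}(S)$, and --- this is the step your proposal is missing --- the Lefschetz components of $B$ are recovered as $\Phi_{r}(L,\Lambda)$ applied to $\pi_{*}(P)+\pi_{*}(S)$ (Lemma \ref{non-commutative-polynomial}); since $L$ and $\Lambda$ are local operators, this forces $\mathrm{supp}\,B\subset\mathrm{supp}\left(\pi_{*}(P)+\pi_{*}(S)\right)$. Setting $T=Q-dB$ then yields a closed primitive current in the class, supported in $W\cup\mathrm{supp}\,\pi_{*}(P)$, hence not on all of $M$. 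In short: primitivity is achieved by subtracting an exact correction manufactured from the deformed coboundary, with support control coming from the locality of the $sl_2$-calculus rather than from any coisotropic positioning or from solving a $\partial_-$-equation; to repair your proposal you would need to replace your two unresolved steps with this mechanism.
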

As an immediate application of Theorem \ref{main-result1}, we provide a complete solution to Guillemin's question concerning symplectic Harmonic representatives of Thom classes by establishing the following result.

 \begin{theorem} \label{Guillemin's-question} Assume that $(M,\omega)$ is a $2n$ dimensional connected compact symplectic manifold, and that $N$ is a compact oriented submanifold of $M$ whose Thom class $[\tau_N]$ admits a symplectic Harmonic representative. If $\text{\,codim\,}(N)$  is odd,
 then $[\tau_N]$ must always admit a symplectic Harmonic representative not supported on the entire manifold. If $\text{\,codim\,}(N)=2p$ is even, then a sufficient and necessary condition for $[\tau_N]$ to admit a symplectic Harmonic representative not supported on the whole manifold is that $[\omega]^{n-p}\wedge [\tau_N]=0$. As a special case, the Thom class of a compact oriented isotropic submanifold always admits a symplectic Harmonic representative not supported everywhere on $M$.
 \end{theorem}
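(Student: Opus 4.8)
The plan is to reduce everything to Theorem \ref{main-result1} via the Lefschetz (primitive) decomposition of a symplectic Harmonic representative. Write $L=\omega\wedge(\cdot)$ for the Lefschetz operator and $k=\codim(N)$, so $[\tau_N]\in H^k(M,\R)$. Using the hypothesis, fix a symplectic Harmonic representative $\tau$ of $[\tau_N]$ and decompose it pointwise into primitive components $\tau=\sum_{r}L^{r}B_{r}$, where $B_{r}$ is primitive of degree $k-2r$. The structural heart of the argument is the lemma that, for symplectic Harmonic $\tau$, every $B_{r}$ is itself closed, hence a closed primitive form. I would prove this by feeding the decomposition into both $d\tau=0$ and $d^{\Lambda}\tau=0$: writing $dB_{r}=\partial_{+}B_{r}+L\,\partial_{-}B_{r}$ (primitive parts of degree $k-2r\pm1$) and using $[L,d]=0$, $[L,d^{\Lambda}]=\pm d$, and $d^{\Lambda}B=(n-\deg B+1)\,\partial_{-}B$ on primitive forms, the two conditions combine to force $\partial_{\pm}B_{r}=0$ for every $r$. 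Consequently $[\tau_N]=\sum_{r}[\omega]^{r}\cup[B_{r}]$ is a genuine decomposition into primitive cohomology classes, even though Hard Lefschetz is not available.

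Next I would isolate the only obstruction to localization, the degree-zero component. When $k$ is odd no component has degree $0$. When $k=2p$ is even, the $r=p$ component is a closed primitive $0$-form, i.e. a global constant $c$, and $L^{p}(c)=c\,\omega^{p}$ is exactly the term forced to be supported on all of $M$ when $c\neq0$. A short computation identifies $c$ with the stated quantity: wedging with $\omega^{n-p}$ kills every positive-degree primitive component, since for $B_{r}$ primitive of degree $2s$ with $s\geq1$ one has $\omega^{n-s}\wedge B_{r}=\omega^{s-1}\wedge(\omega^{n-2s+1}\wedge B_{r})=0$, leaving $\omega^{n-p}\wedge\tau=c\,\omega^{n}$. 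Since $\int_M\omega^{n-p}\wedge\tau_N=\int_N(\omega|_N)^{\,n-p}$ and $\int_M\omega^{n}\neq0$, we get $c=\big(\int_M\omega^{n-p}\wedge\tau_N\big)/\big(\int_M\omega^{n}\big)$, so $c=0$ if and only if $[\omega]^{n-p}\wedge[\tau_N]=0$.

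The necessity direction is then immediate. If some symplectic Harmonic representative $\sigma$ of $[\tau_N]$ vanishes on a nonempty open set $U$, its pointwise primitive components vanish on $U$ too; in particular its degree-zero component, which by the lemma is the global constant $c$, vanishes on $U$ and hence is identically zero. Equivalently $\omega^{n-p}\wedge\sigma=c\,\omega^{n}$ vanishes on $U$, forcing $c=0$. Thus a not-everywhere-supported Harmonic representative can exist only when $[\omega]^{n-p}\wedge[\tau_N]=0$.

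For sufficiency I would reassemble localized pieces. Assuming $k$ odd, or $k=2p$ with $c=0$, every primitive class $[B_{r}]$ in $[\tau_N]=\sum_{r}[\omega]^{r}\cup[B_{r}]$ has positive degree; applying Theorem \ref{main-result1} to each gives a closed primitive representative $\widehat{B}_{r}$ not supported on all of $M$, and each $L^{r}\widehat{B}_{r}$ is again symplectic Harmonic with $\supp(L^{r}\widehat{B}_{r})\subseteq\supp(\widehat{B}_{r})$, so $\widehat{\tau}=\sum_{r}L^{r}\widehat{B}_{r}$ represents $[\tau_N]$. The main technical obstacle lies precisely here: a finite union of proper closed sets can exhaust $M$, so I must force the finitely many representatives to vanish on a common open set. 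I expect to handle this by revisiting the Federer--Fleming construction behind Theorem \ref{main-result1}, deforming every primitive current onto the skeleton of one fixed cubulation (and smoothing at a small enough scale) so that each $\widehat{B}_{r}$ vanishes near a prescribed point $x_{0}$, whence $\widehat{\tau}$ vanishes near $x_{0}$. Finally, for a compact oriented isotropic $N$ of positive dimension $\omega|_N=0$, so in the even-codimension case $[\omega]^{n-p}\wedge[\tau_N]=\big(\int_N(\omega|_N)^{n-p}\big)[\mathrm{pt}]=0$ (odd codimension being unconditional), which yields the stated special case.
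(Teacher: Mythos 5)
Your proposal is correct, and its skeleton coincides with the paper's own argument: the paper proves exactly your key lemma (Lemma \ref{yan's-result-2}, specialized in Lemma \ref{Thom-class-of-general-submanifolds}) that a symplectic Harmonic form Lefschetz-decomposes into \emph{closed} primitive components, isolates the degree-zero constant in the even case, and kills it using $L^{n-p}[\alpha]=0$; sufficiency is then obtained componentwise from the localization theorem, and the isotropic case from $\omega|_N=0$ (Lemma \ref{cup-product}). There are two genuine points of divergence. For necessity the paper simply cites Bahramgiri's theorem (\cite[Thm.~1]{Ba06}; cf. \cite[Lemma 4.1]{TY09}) that when $[\omega]^{n-p}\wedge[\tau_N]\neq 0$ every symplectic Harmonic representative is \emph{nowhere vanishing}; your direct argument --- the degree-zero primitive component is pointwise-determined, closed, hence a global constant $c$ on connected $M$, with $c$ pinned down by $\int_M\omega^{n-p}\wedge\tau_N$ independently of the representative --- re-derives the weaker full-support statement in a self-contained way, which is all the theorem requires, and is a legitimate simplification. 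Second, the union-of-supports obstacle you flag in the sufficiency step is real if Theorem \ref{main-result1} is used as a black box, but you do not need to reopen the Federer--Fleming construction: the paper's Theorem \ref{main-result2} (the precise form of Theorem \ref{main-result1}) already supplies exactly the refinement you propose to engineer, namely each $\widehat B_{r}$ can be taken supported in a small neighborhood of the union of a $(k-2r-1)$-dimensional Lipschitz chain and a $(2n-k+2r)$-dimensional polyhedral chain; since $1\leq k-2r$, all these sets have Hausdorff dimension at most $2n-1$, so finitely many of them cannot cover $M$, and shrinking the neighborhoods leaves a common ball untouched, which closes your reassembly rigorously. Finally, your explicit restriction to isotropic $N$ of positive dimension is apt: for a $0$-dimensional $N$ one has $p=n$ and the criterion $[\omega]^{n-p}\wedge[\tau_N]=[\tau_N]\neq 0$ fails, a caveat the paper leaves implicit in Lemma \ref{cup-product}.
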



 In the past, symplectic Harmonic theory has been studied mostly using algebraic tools such as Lie algebra representation and spectral sequences.
 The geometric measure theoretic approach to symplectic Hodge theory developed in the current paper is original. It allows us to establish deeper properties in this theory which can not be accessed using the usual algebraic methods.


The plan of this paper is as follows. Section \ref{preliminary} presents a quick review of symplectic Hodge theory, and the theory of compactly supported currents on a manifold. Section \ref{normal-currents} gives a self-contained intrinsic description of the cohomology theory of normal currents on a differential manifold, as well as the precise statement of the Federer-Fleming Deformation theorem.  Section \ref{sl2-on-distribution-de-rham} discusses the canonical $sl_2$ module structure on the space of compactly supported De Rham currents on a symplectic manifold. Section \ref{fundamental-result} proves that on a compact symplectic manifold every primitive cohomology class of positive degrees admits a
symplectic Harmonic representative not supported on the entire manifold. Finally, Section \ref{open-question} provides a complete solution to the open question asked by Guillemin concerning the symplectic Harmonic representatives of Thom classes.

\section{Preliminaries}\label{preliminary}
\subsection{Review of Symplectic Hodge theory}\medskip \noindent\vskip 0.5cm

In this section we present a brief review of background materials in symplectic Hodge theory. For more details, we refer to \cite{brylinski;differential-poisson}, \cite{Yan96}, \cite{Ba06}, \cite{TY09} and \cite{TY10}. Throughout this section, we assume that
$(M,\omega)$ is a $2n$ dimensional symplectic manifold.

On the symplectic manifold $(M,\omega)$, the Lefschetz map $L$, the dual Lefschetz map $\Lambda$, and the degree counting map $H$ are defined as follows.

\begin{equation} \label{three-canonical-maps} \begin{aligned} & L :\Omega^*(M) \rightarrow \Omega^{*+2}(M), \,\,\,\alpha \mapsto \alpha \wedge \omega,\\
 & \Lambda: \Omega^*(M) \rightarrow \Omega^{*-2}(M),\,\,\,\alpha \mapsto \iota_{\pi}\alpha,\\
& H: \Omega(M)\rightarrow \Omega(M),\,\,\,H(\alpha)=\displaystyle\sum_{k=0}^n (n-k)\Pi^k(\alpha),\end{aligned}\end{equation}
where $\pi=\omega^{-1}$ is the canonical Poisson bi-vector associated to $\omega$, and \[\Pi^k:\Omega(M)=\bigoplus_{i=0}^{2n}\Omega^i(M) \rightarrow \Omega^k(M)\] is the projection map.

The actions of $L$, $\Lambda$ and $H$ on $\Omega(M)$ satisfy the following commutator relations.

\begin{equation} \label{sl2-module-on-forms} [  \Lambda, L]=H, \,\,\,[H, \Lambda]=2\Lambda,\,\,\,[H, L]=-2L.
\end{equation}

Therefore they define a representation of the Lie algebra $sl(2)$ on $\Omega(M)$. Although the $sl_2$-module $\Omega(M)$ is infinite dimensional, there are only finitely many eigenvalues of the operator $H$. $sl_2$-modules of this type are studied in great details in \cite{Ma95} and \cite{Yan96}.  Among other things,the following result is proved in \cite{Yan96}.

\begin{lemma} \label{yan's-result}  Assume that $(M,\omega)$ is a $2n$ dimensional symplectic manifold.
\begin{enumerate}
\item [1)] For any $0\leq r \leq n$, the Lefschetz map
\[ L^{n-r}: \Omega^{r}(M) \rightarrow \Omega^{2n-r}(M), \,\alpha \mapsto \omega^{n-r}\wedge \alpha\]
is an isomorphism.
\item [2)] Let $\alpha\in \Omega^k(M)$ with $0\leq k\leq n$. Then $\alpha$ is primitive, i.e., $\omega^{n-k+1}\wedge \alpha=0$,  if and only if $
\Lambda \alpha=0$.
\item [3)]  Any differential form
$\alpha_k \in \Omega^k(M)$ admits a unique Lefschetz decomposition
\begin{equation}\label{lefschetz-decompose-forms} \alpha_k =\displaystyle \sum_{r\geq \text{max}(\frac{k-n}{2}, 0)} \dfrac{L^r}{r!}\beta_{k-2r},\end{equation}
where $\beta_{k-2r}$ is a primitive form of degree $k-2r$.

\end{enumerate}
\end{lemma}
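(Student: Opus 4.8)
The plan is to reduce all three assertions to the representation theory of finite-dimensional $sl_2$-modules, carried out fiberwise. The operators $L$, $\Lambda$, $H$ are algebraic (zeroth order) and act pointwise on the exterior algebra $\bigwedge^{\bullet} T_p^*M$ at each $p\in M$; by the commutation relations \eqref{sl2-module-on-forms}, setting $e=L$, $f=\Lambda$, $h=[L,\Lambda]=-H$ yields the standard relations $[h,e]=2e$, $[h,f]=-2f$, $[e,f]=h$, so each fiber is a finite-dimensional $sl_2$-module. Since $H$ acts on $\Omega^k(M)$ by the scalar $n-k$, the $h$-eigenvalue (weight) of a $k$-form is $k-n$, which runs through $-n,-n+2,\dots,n$ as $k$ runs from $0$ to $2n$; in particular the weight grading coincides with the degree grading. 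Because $L$, $\Lambda$, $H$ are tensorial bundle maps, it suffices to prove each assertion in a single fiber and then pass to sections. I would then invoke the two classical facts about finite-dimensional $sl_2$-modules: complete reducibility, and the classification of the irreducibles $V_m$ (of dimension $m+1$, with one-dimensional weight spaces of weights $-m,-m+2,\dots,m$, the bottom one being annihilated by $f$).

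For (1), the map $L^{n-r}=e^{\,n-r}$ carries the weight-$(r-n)$ space onto the weight-$(n-r)$ space, i.e.\ it goes from weight $-(n-r)$ to weight $+(n-r)$. In any finite-dimensional $sl_2$-module these two mirror weight spaces have equal dimension, and on each irreducible summand meeting them $e^{\,n-r}$ sends the bottom weight vector to a nonzero multiple of the top one; hence $e^{\,n-r}$ is a fiberwise isomorphism between them. As $L$ is a bundle map, $L^{n-r}$ is a bundle isomorphism, and therefore an isomorphism on forms.

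For (2), I would translate both conditions into $sl_2$ language: primitivity $\omega^{n-k+1}\wedge\alpha=0$ reads $e^{\,n-k+1}\alpha=0$, and this must be matched with $f\alpha=\Lambda\alpha=0$ for $\alpha$ of weight $k-n\le 0$. Decomposing $\alpha$ fiberwise into irreducible summands $\alpha_i\in V_{m_i}$ (each a weight-$(k-n)$ vector), $f\alpha=0$ holds iff every $\alpha_i$ is a lowest-weight vector, which forces $m_i=n-k$. On the other hand a short count of how many times $e$ may be applied before annihilation shows that a weight-$(k-n)$ vector in $V_m$ (which requires $m\ge n-k$ and $m\equiv n-k\bmod 2$) is killed by $e^{\,n-k+1}$ exactly when $m=n-k$, while the summands with $m\ge n-k+2$ survive one further application of $e$. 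The two conditions therefore single out the same summands, proving the equivalence; this weight-bookkeeping is the most delicate step, but it is elementary once the irreducible decomposition is in place.

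For (3), the Lefschetz decomposition is the isotypic decomposition of the fiber made explicit: each irreducible summand is the $L$-orbit of its unique primitive (lowest-weight) generator, so the weight-$(k-n)$ part of $\alpha_k$ in a summand whose generator has degree $k-2r$ is a multiple of $L^r$ applied to that generator. Collecting summands according to the degree of their primitive generators yields $\alpha_k=\sum_{r}\tfrac{L^r}{r!}\beta_{k-2r}$, with the normalization $\tfrac{1}{r!}$ dictated by the $sl_2$ structure constants and the range $r\ge\max(\tfrac{k-n}{2},0)$ forced by $0\le k-2r\le n$ for a nonzero primitive form. Uniqueness follows from the uniqueness of the isotypic decomposition together with the injectivity of $L^r$ on the relevant weight space supplied by (1). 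The only genuine obstacle is the fiberwise weight analysis in (2); everything else is the standard dictionary between the Lefschetz operators and $sl_2$ representation theory, and the global statements follow immediately from the fiberwise ones since $L$, $\Lambda$, $H$ are tensorial.
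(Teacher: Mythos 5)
Your proof is correct, but it takes a genuinely different route from the one the paper relies on. The paper gives no proof of this lemma at all: it cites \cite{Yan96}, and Yan's argument is not fiberwise. Since $\Omega(M)$ is infinite dimensional, complete reducibility --- the engine of your argument --- is not available for it as an $sl_2$-module; Yan (building on Mathieu \cite{Ma95}) instead develops a structure theory for $sl_2$-modules on which $H$ acts semisimply with only finitely many eigenvalues (``finite $H$-spectrum''), and deduces (1)--(3) directly for the global module $\Omega(M)$. You sidestep that machinery by exploiting the tensoriality of $L$, $\Lambda$, $H$ and running classical finite-dimensional $sl_2$ theory in each fiber $\bigwedge^{\bullet}T_p^{*}M$; this is essentially the Weil--Wells approach, and indeed the paper itself quotes \cite[Thm.~3.12]{We80} for the closely related refinement in Lemma \ref{yan's-result-2}. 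What Yan's module-theoretic approach buys is portability: the paper applies the same abstract statement verbatim to the space $\mathcal{E}'$ of compactly supported currents in Proposition \ref{lefschetz-decomposition-for-compact-currents}, where your fiberwise argument has no direct meaning, since currents are not sections of a vector bundle evaluated pointwise. What your approach buys is elementarity and self-containedness for differential forms, at the cost of having to redo (or abstract) the argument for currents.

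Two steps deserve tightening, though neither is a fatal gap. First, in (3), ``pass to sections'' needs a word on smoothness: the fiberwise components $\beta_{k-2r}(p)$ must vary smoothly in $p$. This follows either because the projections onto the primitive pieces are given by universal non-commutative polynomials $\Phi_{k,r}(L,\Lambda)$ applied to $\alpha_k$ (exactly \cite[Thm.~3.12]{We80}), or because the bundle map $(\beta_{k-2r})_r \mapsto \sum_r \frac{1}{r!}L^r\beta_{k-2r}$ is a vector-bundle isomorphism and hence has a smooth inverse. Second, in (1) your phrase ``sends the bottom weight vector to a nonzero multiple of the top one'' is literally accurate only for summands of type $V_{n-r}$; in $V_m$ with $m>n-r$ the weight-$(r-n)$ vector is interior, but $e^{n-r}$ is still nonzero on it because $(r-n)+2(n-r)=n-r\leq m$, so injectivity (and then bijectivity by equality of mirror weight-space dimensions) holds as you conclude. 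Relatedly, your uniqueness argument in (3) via ``injectivity of $L^r$'' is airtight for $k\leq n$ (where $r\geq 0\geq k-n$ guarantees $L^r$ is injective on primitive $(k-2r)$-forms), but for $k\geq n+2$ the stated range $r\geq\max(\frac{k-n}{2},0)$ includes indices $r<k-n$ for which $L^r$ annihilates every primitive $(k-2r)$-form; there isotypic uniqueness pins down each term $L^r\beta_{k-2r}$ but not $\beta_{k-2r}$ itself. That redundancy is inherited from the lemma as stated, so it counts against the formulation rather than against your proof, but it is worth noting that strict uniqueness of the $\beta$'s requires $r\geq\max(k-n,0)$.
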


Since the symplectic structure $\omega$ is a non-degenerate two form, using it to identify one forms with one vectors we obtain
a non-degenerate bi-linear pairing on the space of one forms. This pairing further extends to a non-degenerate bi-linear pairing
$(\cdot,\cdot)$ on the space of differential $k$-forms. In this context, we define the symplectic Hodge star
operator $\star$ as follows.
\begin{equation}\label{sym-hodge-star}  \star\alpha_k\wedge \beta_k=(\alpha_k,\beta_k)\dfrac{\omega^n}{n!},
\end{equation}
where both $\alpha_k$ and $\beta_k$ are differential $k$-forms.

On the space of differential $k$-forms,  the symplectic Hodge adjoint operator of the exterior differential $d$,
is given by \[ d^{\Lambda}\alpha_k=(-1)^{k+1}\star d\star\alpha_k.\]

It is straightforward to check that $d$ anti-commutes with $d^{\Lambda}$. In this context, a differential form $\alpha$ is said to be symplectic Harmonic if and only if $d\alpha=d^{\Lambda}\alpha=0$.

The following commutator relations are important.
\begin{equation} \label{commutator-on-forms}\begin{matrix}
 &[d,L]=0,  &[d^{\Lambda},\Lambda]=0,  &[d,\Lambda]=d^{\Lambda},
 \\&[d^{\Lambda}, L]=d,  &[dd^{\Lambda},L]=0,  &[dd^{\Lambda},\Lambda]=0.\end{matrix}
\end{equation}

We will need the following refinement of Lemma \ref{yan's-result}.
\begin{lemma}\label{yan's-result-2} Consider the Lefschetz decomposition of the differential form $\alpha_k$ as given in Equation (\ref{lefschetz-decompose-forms}). Then there are non-commutative polynomials $\Phi_{k,r}(L,\Lambda)$ such that
\[ \beta_{k-2r}=\Phi_{k,r}(L,\Lambda)\alpha_k;\] moreover,
each $\beta_{k-2r}$ is $d$-closed and primitive if $\alpha_k$ is Harmonic.

\end{lemma}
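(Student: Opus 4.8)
The plan is to split the assertion into two logically independent parts: the existence of the universal operators $\Phi_{k,r}(L,\Lambda)$, which is pure $sl_2$ representation theory, and the harmonicity statement, which is a formal consequence of the commutator relations (\ref{commutator-on-forms}). For the first part, I would exploit that the Lefschetz decomposition (\ref{lefschetz-decompose-forms}) writes $\Omega^k(M)$ as the direct sum of the subspaces $L^r P^{k-2r}$, where $P^{j}$ denotes the primitive forms of degree $j$. The composite operator $L\Lambda$ preserves each summand $L^r P^{k-2r}$, and by the standard $sl_2$-identity $\Lambda L^r\beta = r(m-r+1)L^{r-1}\beta$ applied to a primitive (hence highest weight, i.e. $\Lambda\beta=0$) form $\beta$ of weight $m=n-(k-2r)$, it acts there as the scalar $\lambda_r = r(n-k+r+1)$. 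A short check shows that $r\mapsto\lambda_r$ is strictly monotone on the admissible range $\max(\tfrac{k-n}{2},0)\le r\le \tfrac{k}{2}$, so these eigenvalues are pairwise distinct.

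Consequently the spectral projector onto $L^r P^{k-2r}$ is the Lagrange interpolation polynomial
\[ \pi_r = \prod_{s\neq r}\frac{L\Lambda-\lambda_s}{\lambda_r-\lambda_s}, \]
a polynomial in $L\Lambda$, and $\pi_r\alpha_k = \tfrac{1}{r!}L^r\beta_{k-2r}$. Applying $\Lambda^r$ and using $\Lambda^r L^r\beta = r!\prod_{j=1}^{r}(m-r+j)\,\beta$, whose scalar factor is nonzero on the admissible range, recovers $\beta_{k-2r}$. Taking $\Phi_{k,r}(L,\Lambda)$ to be the resulting (non-commutative) polynomial $\Lambda^r\pi_r$ rescaled by this nonzero constant then yields $\beta_{k-2r}=\Phi_{k,r}(L,\Lambda)\alpha_k$. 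The crucial feature, which is exactly what the harmonicity part will exploit, is that $\Phi_{k,r}$ depends only on $n,k,r$ and not on $\alpha_k$ or on $M$.

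For the second part, primitivity of each $\beta_{k-2r}$ is automatic from Lemma \ref{yan's-result}(3), so the content is $d$-closedness. Here I would observe that the two-dimensional space $V$ spanned by $d$ and $d^{\Lambda}$ is invariant under both $\ad_L$ and $\ad_\Lambda$: indeed (\ref{commutator-on-forms}) gives $\ad_L(d)=0$, $\ad_L(d^{\Lambda})=-d$, $\ad_\Lambda(d)=-d^{\Lambda}$, and $\ad_\Lambda(d^{\Lambda})=0$. A simultaneous induction on word length then shows that for any polynomial $\Phi(L,\Lambda)$ one can commute $d$ to the right,
\[ d\,\Phi(L,\Lambda)=\Psi_1(L,\Lambda)\,d+\Psi_2(L,\Lambda)\,d^{\Lambda}, \]
for suitable polynomials $\Psi_1,\Psi_2$ in $L,\Lambda$. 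Applying this with $\Phi=\Phi_{k,r}$ and using the harmonicity $d\alpha_k=d^{\Lambda}\alpha_k=0$ gives $d\beta_{k-2r}=\Psi_1(L,\Lambda)d\alpha_k+\Psi_2(L,\Lambda)d^{\Lambda}\alpha_k=0$. Since $\beta_{k-2r}$ is now primitive and $d$-closed, the relation $d^{\Lambda}=[d,\Lambda]$ forces $d^{\Lambda}\beta_{k-2r}=-\Lambda d\beta_{k-2r}=0$ as well, so each primitive component is in fact itself symplectic Harmonic.

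I expect the main obstacle to be the first part: one must verify carefully that the $L\Lambda$-eigenvalues $\lambda_r$ are genuinely distinct over the whole admissible range (including the regime $k>n$, where the lower summation bound is positive) and that the scalar $\prod_{j=1}^{r}(m-r+j)$ never vanishes, since these are precisely the facts making the spectral projection and the subsequent inversion by $\Lambda^r$ well defined. Once the universal operators $\Phi_{k,r}$ are in hand, the harmonicity statement follows formally, the only delicate point being the bookkeeping in the induction certifying that the $\ad$-invariance of $V$ propagates through arbitrary words in $L$ and $\Lambda$.
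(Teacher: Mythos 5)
Your proposal is correct, but it is considerably more self-contained than the paper's own proof, which disposes of the first assertion by citing \cite[Thm. 3.12]{We80} (where the universal non-commutative polynomials are produced by an inductive $sl_2$-argument) and of the second by asserting that it follows directly from the commutator relations (\ref{commutator-on-forms}). Your route to the first assertion is genuinely different from the cited one: rather than an induction, you diagonalize the single operator $L\Lambda$, which acts on the summand $L^rP^{k-2r}$ of the decomposition (\ref{lefschetz-decompose-forms}) as the scalar $\lambda_r=r(n-k+r+1)$, and you obtain the projectors by Lagrange interpolation; since $\lambda_r-\lambda_s=(r-s)(n-k+r+s+2)$, the needed distinctness indeed holds on the whole admissible range, including $k>n$. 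This buys explicit closed-form projectors, at the cost of the inversion step by $\Lambda^r$ discussed below. Your treatment of the second assertion is exactly the fleshed-out version of what the paper leaves implicit: the span of $\{d,d^{\Lambda}\}$ is stable under $\ad_L$ and $\ad_\Lambda$, whence $d\,\Phi(L,\Lambda)=\Psi_1(L,\Lambda)d+\Psi_2(L,\Lambda)d^{\Lambda}$ by induction on word length, and harmonicity of $\alpha_k$ kills both terms; your further remark that each primitive component is then itself symplectic Harmonic (via $d^{\Lambda}=[d,\Lambda]$ and $\Lambda\beta_{k-2r}=0$) is a correct bonus beyond the statement.

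One caveat, precisely at the point you flag as the main obstacle: the scalar $r!\prod_{j=1}^{r}(m-r+j)$ with $m=n-k+2r$ does \emph{not} stay nonzero on the full range $r\ge\max(\frac{k-n}{2},0)$ when $k>n$. The factors run through $n-k+r+1,\dots,n-k+2r$ and include $0$ exactly when $\max(\frac{k-n}{2},0)\le r\le k-n-1$. This is harmless rather than fatal: on exactly that range the $sl_2$-string of any primitive form of degree $k-2r$ has length $n-(k-2r)+1\le r$, so $L^r$ annihilates all of $P^{k-2r}$, those summands contribute nothing (equivalently $\pi_r\alpha_k=0$), and one may take $\Phi_{k,r}=0$ there; your inversion by $\Lambda^r$ is then legitimate on the effective range $r\ge\max(k-n,0)$, which is all the lemma requires. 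With that bookkeeping made explicit, the argument is complete.
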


\begin{proof} For the first assertion, we refer to \cite[Thm. 3.12]{We80} for a detailed proof. The second assertion in Lemma \ref{yan's-result-2}
follows directly from the commutator relations given in Equation (\ref{commutator-on-forms}).

\end{proof}

It is well known that on a differential manifold one can apply a smoothing operator to a current and get a smooth differential form.
 In the proof of \cite[Theorem 2]{Ba06}, Bahramgiri constructed a symplectic smoothing operator for currents on symplectic manifolds.
This is an important construction in symplectic Harmonic theory. We summarize the properties of the symplectic smoothing operator in the following theorem.

\begin{theorem}\label{symplectic-smoothing-operator}(\cite{Ba06}) Let $(M, \omega)$ be a symplectic manifold, and $T$ a current of degree $k$ supported inside an open set $W$ in $M$. Then there exists a symplectic smoothing operator $\mathcal{S}$ such that
 $\mathcal{S}(T)$ is a smooth differential $k$-form  supported inside $W$. Moreover, if $T$ is a closed current,
then $S(T)$ is a closed differential form such that $S(T)-T$ is a coboundary (in the space of De Rham currents);
if  $T$ is symplectic Harmonic, then $\mathcal{S}(T)$ is a symplectic Harmonic differential form;
and if $T$ is primitive, then $\mathcal{S}(T)$ is a primitive differential form.
\end{theorem}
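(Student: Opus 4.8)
The plan is to realize $\mathcal{S}$ as an average of pullbacks under symplectomorphisms of $(M,\omega)$, following Bahramgiri and refining de Rham's regularization. The guiding observation is that if $\phi$ is a symplectomorphism, then $\phi^*$ commutes with \emph{all} the symplectically defined operators: since $\phi^*\omega=\omega$ we have $\phi^*(L\alpha)=\phi^*(\alpha\wedge\omega)=\phi^*\alpha\wedge\omega=L\phi^*\alpha$, and since $\phi$ preserves $\pi=\omega^{-1}$ it commutes with $\Lambda=\iota_\pi$, hence with $H$. Because $\star$ is built from $\omega$ and the induced pairing in Equation (\ref{sym-hodge-star}), $\phi^*$ commutes with $\star$, and as pullback always commutes with $d$ it therefore commutes with $d^\Lambda=(-1)^{k+1}\star d\star$ as well. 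Consequently any operator assembled as an average of such pullbacks will automatically preserve closedness, the primitivity criterion $\Lambda T=0$ of Lemma~\ref{yan's-result}, and symplectic Harmonicity. The whole point is thus to carry out smoothing \emph{through symplectomorphisms} rather than through scalar mollification, which would break these commutations.

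First I would treat the local model. In a Darboux chart we may take $\omega=\sum dp_i\wedge dq_i$ with constant coefficients, so that translations $\phi_v(x)=x+v$ are symplectomorphisms. For a normalized mollifier $\phi_\eps$ supported near the origin, the average $\mathcal{S}_\eps(T)=\int_{\R^{2n}}\phi_v^{\,*}T\,\phi_\eps(v)\,dv$ is exactly the convolution $\phi_\eps*T$; it is a smooth form, it commutes with $L,\Lambda,d,d^\Lambda,\star$ precisely because the coefficients of $\omega$ and $\pi$ are constant, and it is chain-homotopic to the identity. The homotopy comes from the Cartan formula along the translation flow: writing $X_v$ for the generator of $s\mapsto\phi_{sv}$, one has $\phi_v^{\,*}-\id=\int_0^1\phi_{sv}^{\,*}\mathcal{L}_{X_v}\,ds=dA_v+A_vd$ with $A_v=\int_0^1\phi_{sv}^{\,*}\iota_{X_v}\,ds$, whence
\[ \mathcal{S}_\eps-\id=dB_\eps+B_\eps d,\qquad B_\eps=\int A_v\,\phi_\eps(v)\,dv. \]
In particular $\mathcal{S}_\eps(T)-T=dB_\eps(T)$ is a coboundary in the space of De Rham currents whenever $T$ is closed.

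To globalize while staying inside the symplectomorphism group, I would replace genuine translations by globally defined symplectomorphisms that merely \emph{agree} with a translation on a prescribed subregion. Given a Darboux chart $U$ and a smaller region $V\subset\subset U$, a suitable compactly supported Hamiltonian produces, for $v$ in the support of $\phi_\eps$, a symplectomorphism $\phi_v$ of all of $M$ equal to $x\mapsto x+v$ on $V$ and to the identity outside $U$; averaging their pullbacks gives an operator $\mathcal{S}_V$ that is globally symplectic, satisfies the same homotopy relation $\mathcal{S}_V-\id=dB+Bd$, and coincides with convolution on $V$, so that $\mathcal{S}_V(T)$ is smooth on $V$. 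Following de Rham's scheme I would then pick a locally finite cover of $M$ by Darboux charts with subregions $V_i$ covering $M$ and set $\mathcal{S}=\cdots\circ\mathcal{S}_{V_2}\circ\mathcal{S}_{V_1}$; on each compact set only finitely many factors are nontrivial, each factor enlarges the locus on which the current is already smooth without ever shrinking it, and the support is kept inside $W$ by choosing all auxiliary Hamiltonians supported near $W$. The composite inherits from every factor the commutation with $L,\Lambda,d,d^\Lambda,\star$ and a global homotopy $\mathcal{S}-\id=dB+Bd$, which yields at once all four assertions: smoothness of $\mathcal{S}(T)$, the coboundary statement $\mathcal{S}(T)-T=dB(T)$ for closed $T$, and, via $d^\Lambda\mathcal{S}(T)=\mathcal{S}(d^\Lambda T)$ and $\Lambda\mathcal{S}(T)=\mathcal{S}(\Lambda T)$, the preservation of symplectic Harmonicity and of primitivity.

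I expect the globalization to be the main obstacle. An honest scalar cutoff localizing the convolution to a chart would destroy commutation with $L$ and $\Lambda$ and hence ruin the preservation of primitivity and Harmonicity, so the delicate point is to confine the smoothing entirely within the group of symplectomorphisms by using compactly supported Hamiltonian flows, and then to verify, exactly as in de Rham's regularization, that the successive compositions genuinely drive the current to a \emph{globally} smooth form while never decreasing the already-smooth locus.
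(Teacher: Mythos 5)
Your proposal is correct and follows essentially the same route as the proof the paper relies on: the theorem is quoted from Bahramgiri's thesis \cite{Ba06}, whose construction is exactly this symplectic refinement of de Rham's regularization --- local convolution by translations in Darboux charts, globalized through compactly supported Hamiltonian symplectomorphisms, so that the smoothing operator commutes with $d$, $L$, $\Lambda$, $\star$ (hence $d^{\Lambda}$) and satisfies the chain homotopy $\mathcal{S}-\id=dB+Bd$. You have correctly identified both the key commutation mechanism and the genuinely delicate point (keeping the regularizing diffeomorphisms inside the symplectomorphism group rather than using scalar cutoffs), which is precisely what distinguishes Bahramgiri's operator from the classical de Rham smoothing.
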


\subsection{Review of the theory of compactly supported currents}\label{review-of-currents}

\medskip \noindent

\vskip .5cm

In this section, we present a quick review of the standard theory of compactly supported currents in the context of locally convex topological vector spaces \cite{R73}. We follow closely the exposition given in the classic textbook \cite{DeRham84}. Throughout this section, we assume that $M$ is an $m$ dimensional differential manifold. We denote by $\mathcal{E}(M)$, or simply $\mathcal{E}$, the space of $C^{\infty}$
forms on $M$. Moreover, we denote by $\mathcal{E}^k(M)$, or simply $\mathcal{E}^k$, the space of differential $k$-forms.

  For any non-negative integer $i\geq 0$,  and any compact set $K$ which lies in an open coordinate neighborhood $U$,
  we define a semi-norm $\vert\vert \cdot\vert\vert_{K}^i$ on $\mathcal{E}$ as follows: using the coordinate system
  $\{ x_1, x_2, \cdots, x_m\}$ on $U$, if the restriction of a $k$-form $\phi$ to $U$ has an expression as
\[\displaystyle\sum_{i_1<i_2<\cdots<i_k} f_{i_1i_2\cdots i_k}dx_{i_1}\wedge d_{i_2}\wedge \cdots \wedge dx_{i_k} ,\]
 then
\begin{equation}\label{semi-norm} \vert\vert \phi\vert\vert_{K}^i= \text{sup} \{ \vert D^{j} f_{i_1i_2\cdots i_k}\vert,\, 0\leq \vert j\vert \leq i, x\in K\},\end{equation}
where $j=(j_1, j_2, \cdots, j_m)$ is a multi-index, $\vert j\vert =j_1+j_2+\cdots +j_m$, $D^j=D_1^{j_1}\cdots D_m^{j_m}$,
and $D_l=\dfrac{\partial}{\partial x_l}$.
The family of all such semi-norms induces a translation invariant Hausdorff topology on $\mathcal{E}$, and turns $\mathcal{E}$ into a locally convex topological vector space.



   We say that a continuous functional $T \in \mathcal{E'}$ is zero on an open set $V\subset M$ if $T(\phi)=0$ for any $\phi\in\mathcal{E}$ supported inside $V$. It is an elementary fact that there exists a maximal open set in $M$ such that $T$ is zero. The complement of this maximal open set is called the support of $T$, and is denoted by $\text{supp}\,T$. Moreover, it is easy to show that for any $T\in \mathcal{E}'$, $\text{supp}\,T$ is a compact subset of $M$, c.f. \cite[Sec.10]{DeRham84}.

   \begin{definition} Any continuous functional in $\mathcal{E}'(M)$ is called a compactly supported current on $M$.
   \end{definition}

 We say that a compactly supported current $T$ in $\mathcal{E}'$ is $k$ dimensional if $T(\phi)=0$ for any
$\phi \in \mathcal{E}(M)$ whose degree is not $k$. For a $k$ dimensional compactly supported current $T$, its degree is defined to be $m-k$.
We denote by $\mathcal{E'}^k$ the space of all compactly supported currents in $\mathcal{E}'$ of degree $k$.

  Given a $q$ dimensional current $T$ in $\mathcal{E}'$, its boundary  is by definition a $q-1$ dimensional current given by

\begin{equation}  \label{boundary-operator} \partial T(\phi)= T(d\phi), \,\,\,\,\,\,\,\,\forall\,\phi \in \mathcal{E}^{q-1},\end{equation}
and its exterior differential is defined by
\begin{equation}  \label{differential-of-current} dT=(-1)^{m-q+1} \partial T .  \end{equation}

Note that the exterior differential $d: \mathcal{E}\rightarrow \mathcal{E}$ is a continuous linear mapping. Therefore if $T\in \mathcal{E}'$, then both $\partial T$ and $dT$ are well defined compactly supported currents in $\mathcal{E}'$.
Since $d ^2=\partial^2=0$, we have a differential complex
\begin{equation}\label{distributional-derham-complex}0\rightarrow \mathcal{E'}^0\xrightarrow{d}\mathcal{E'}^1\xrightarrow{d}\cdots \xrightarrow{d}\mathcal{E'}^{m}\rightarrow 0.\end{equation}
The $i$-th compactly supported distributional De Rham cohomology $H_c^{i,-\infty}(M)$ is defined to be the $i$-th cohomology of the differential complex (\ref{distributional-derham-complex}).


Let $f$ be a smooth map from a manifold $X$ into another manifold $Y$. Then the pullback map $f^*: \mathcal{E}(Y) \rightarrow \mathcal{E}(X)$ is a continuous linear mapping. Thus it induces a pushforward map from $\mathcal{E}'(X)$ to $\mathcal{E}'(Y)$ in the following way.
\[f_*: \mathcal{E}'(X)\rightarrow \mathcal{E}'(Y),\,\,\, f_*(T)(\phi)=T(f^*\phi), \,\forall\, \phi\in \mathcal{E}(Y).\]
It is straightforward to check that this pushforward map is a continuous map; moreover, it  commutes with the boundary map, and sign commutes with the differential $d$. More precisely, let $\text{dim}\, X= m$ and $\text{dim}\, (Y)=n$, then we have that
\[ \partial f_*(T)=f_*(\partial T),\,\,\,\,\, df_*(T)=(-1)^{m+n}f_*dT,\,\,\,\forall\, T\in \mathcal{E}'(X).\]

We are particularly interested in the following situation. Let $U$ be an open set of a manifold $M$. Then the inclusion map
$i: U\hookrightarrow M$ induces a pushforward map
\begin{equation} \label{topological-embedding} i_*: \mathcal{E}'(U)\rightarrow \mathcal{E}'(M).\end{equation}
It is easy to show that Map (\ref{topological-embedding}) is a homeomorphism onto its image. So we would not distinguish a
current $T\in \mathcal{E}'(U)$ from its image $i_*(T) \in \mathcal{E}'(M)$ under the pushforward map.


Now suppose that $M$ is an $m$ dimensional manifold covered by two open sets $U$ and $V$.  Then
 sequences of inclusions
\[ M\longleftarrow U \sqcup V \longleftarrow U\cap V\]
give rise to sequences of pushforward maps
\begin{equation*}
\begin{aligned}
     \mathcal{E'}^i(M)\xleftarrow{\text{sum}} \mathcal{E'}^i(U)\oplus \mathcal{E'}^i(V)
                      \xleftarrow{\text{signed pushforward}} & \mathcal{E'}^i(U\cap V)\\
    (-i_*(T), i_*(T)) \xleftarrow[\rule{2.39cm}{0cm}]{} & T
\end{aligned}
\end{equation*}
This gives us a Mayer-Vietoris sequence
\begin{equation}\label{Mayer-Vietois-sq-01}
0\longleftarrow \mathcal{E'}^i(M)\longleftarrow \mathcal{E'}^i(U)\oplus \mathcal{E'}^i(V) \longleftarrow \mathcal{E'}^i(U\cap V)\longleftarrow 0.
\end{equation}

\begin{proposition}\label{mayer-vietoris}
The Mayer-Vietoris sequence (\ref{Mayer-Vietois-sq-01}) of compactly supported currents is exact.

\end{proposition}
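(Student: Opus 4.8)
The plan is to prove exactness at each of the three nodes separately, dispatching the two outer ones quickly and concentrating effort on the middle. Throughout, the single mechanism I rely on is extension by zero: a smooth form on an open set $W$ whose support is closed in $M$ and contained in $W$ extends by zero to a global smooth form, and this is exactly what makes the restriction/pushforward adjunction $\iota_*(S)(\phi)=S(\phi|_W)$ usable for concrete computations. I will combine this with two structural facts already available: every continuous functional on $\mathcal{E}(W)$ is automatically a compactly supported current, so to produce an element of $\mathcal{E}'(W)$ it suffices to exhibit a continuous linear functional; and pushforward along an open inclusion preserves support, $\supp(\iota_*S)=\supp(S)$ regarded as a compact subset of $M$ (this I would verify once by a direct bump-function argument).

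For injectivity at $\mathcal{E'}^i(U\cap V)$, suppose $T$ maps to $0$, so in particular $j_*T=0$ for $j:U\cap V\hookrightarrow U$. Given $\psi\in\mathcal{E}(U\cap V)$ I choose a bump function $\chi$ compactly supported in $U\cap V$ with $\chi\equiv 1$ near $\supp T$; then $\chi\psi$ extends by zero to $U$ and, since $\chi\equiv 1$ near $\supp T$, one gets $T(\psi)=T(\chi\psi)=(j_*T)(\widetilde{\chi\psi})=0$, so $T=0$. For surjectivity of the sum map onto $\mathcal{E'}^i(M)$, I take a partition of unity $\rho_U+\rho_V\equiv 1$ subordinate to $\{U,V\}$, so each $\supp(\rho_\bullet)$ is closed in $M$ and contained in the corresponding open set. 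Given $R\in\mathcal{E'}^i(M)$ I define $S(\psi):=R(\widetilde{\rho_U\psi})$ on $\mathcal{E}(U)$ and $T$ analogously on $V$; the extensions by zero are legitimate precisely because $\supp(\rho_U)\subseteq U$ is closed in $M$. These are continuous, hence genuine compactly supported currents, and a one-line check gives $\iota_{U*}S=\rho_U R$ and $\iota_{V*}T=\rho_V R$, so their sum is $(\rho_U+\rho_V)R=R$.

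The heart of the argument is exactness in the middle. That the image is contained in the kernel is immediate from functoriality: both $U\cap V\hookrightarrow U\hookrightarrow M$ and $U\cap V\hookrightarrow V\hookrightarrow M$ equal the single inclusion $\ell:U\cap V\hookrightarrow M$, so the two signed pushforwards cancel. For the reverse inclusion, suppose $\iota_{U*}S+\iota_{V*}R=0$. The decisive first step is a support localization: since $\iota_{U*}S=-\iota_{V*}R$ and pushforward preserves support, $\supp S=\supp(\iota_{U*}S)=\supp(\iota_{V*}R)=\supp R$, and being simultaneously the support of a current on $U$ and of a current on $V$ this compact set must lie in $U\cap V$. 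I then define $W\in\mathcal{E'}^i(U\cap V)$ by restricting $S$: choosing $\chi$ compactly supported in $U\cap V$ with $\chi\equiv 1$ near $\supp S$, I set $W(\eta):=-S(\widetilde{\chi\eta})$, independence of $\chi$ following from $\supp S\subseteq\{\chi\equiv 1\}$. The identity $-j_*W=S$ is then another instance of the same principle, while $k_*W=R$, for $k:U\cap V\hookrightarrow V$, is where the hypothesis re-enters: applying $S(\phi|_U)=-R(\phi|_V)$ to the global extension by zero of $\chi\,(\psi'|_{U\cap V})$ for $\psi'\in\mathcal{E}(V)$ converts the value of $S$ into the value of $R$.

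I expect the main obstacle to be the bookkeeping in the middle rather than any single hard estimate. The delicate points are making $W$ manifestly independent of the cutoff $\chi$, and carefully threading the three distinct extension-by-zero operations — to $U$, to $V$, and to $M$ — so that the single scalar identity $S(\phi|_U)=-R(\phi|_V)$ can be invoked to match $k_*W$ with $R$. The continuity checks needed to land in $\mathcal{E}'$ at each stage are routine, but the support localization placing $\supp S$ and $\supp R$ inside $U\cap V$ is the conceptual crux on which the entire construction rests.
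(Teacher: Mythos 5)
Your proof is correct, and it is essentially the paper's approach written out in full: the paper disposes of this proposition in one line by citing the argument of \cite[Prop.~2.7]{BT82} for compactly supported forms, and your partition-of-unity construction for surjectivity together with the support-localization of $\operatorname{supp} S=\operatorname{supp} R$ into $U\cap V$ for middle exactness is precisely how that argument dualizes to compactly supported currents. The details you supply (extension by zero of $\rho_U\psi$, the cutoff $\chi\equiv 1$ near the support, and the identity $\iota_{U*}S=\rho_U R$) are exactly the routine verifications the paper leaves to the reader.
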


\begin{proof} The argument given in \cite[Prop. 2.7]{BT82} for the exactness of the Mayer-Vietoris sequence of forms with compactly support extends
to the present situation.
\end{proof}

Therefore we have a long exact sequence

\begin{equation}\label{exact-sq-1} \cdots \leftarrow H_c^{-\infty,i}(U\cup V)\leftarrow  H_c^{-\infty,i}(U)\oplus H_c^{-\infty,i}(V)\leftarrow H_c^{-\infty,i}(U\cap V) \leftarrow H_c^{-\infty,i-1}(U\cup V)\leftarrow \cdots
\end{equation}

Now we assume that $M$ is an oriented compact manifold. Since $M$ is oriented and compact, any differential form $\alpha \in \mathcal{E}(M)$ defines a compactly supported current in $\mathcal{E}'(M)$ as follows.
\[ \mathcal{E}(M)\rightarrow \R,\,\,\, \,\,\,\phi\mapsto \int_M\alpha \wedge \phi.   \]

Therefore there is a natural chain homomorphism
\begin{equation}\label{chain-map-1} \mathcal{E}(M) \rightarrow \mathcal{E'}(M).\end{equation}

The following  result are standard and we refer to \cite{DeRham84} for detailed proofs.

\begin{theorem}\label{natural-maps-forms-and-currents} The homomorphism (\ref{chain-map-1}) induces an isomorphism.
\[ H_{\text{DR}}^*(M) \cong  H_c^{-\infty,*}(M),\]
where $H_{\text{DR}}^*(M)$ denotes the usual De Rham cohomology of the manifold $M$.
\end{theorem}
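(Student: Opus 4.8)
The plan is to show that the natural map (\ref{chain-map-1}), $\alpha \mapsto T_\alpha$ with $T_\alpha(\phi)=\int_M \alpha\wedge\phi$, is a morphism of cochain complexes inducing an isomorphism on cohomology, following the classical strategy for the de Rham theorem. First I would record that it is indeed a chain map: for a $k$-form $\alpha$, a direct application of Stokes' theorem (the manifold being closed) together with de Rham's sign convention (\ref{differential-of-current}) yields $dT_\alpha=T_{d\alpha}$, so the map descends to a homomorphism $\Phi\colon H_{\text{DR}}^*(M)\to H_c^{-\infty,*}(M)$. To run an inductive argument I would prove the slightly more general statement that for every oriented manifold $U$ admitting a finite good cover the integration pairing induces an isomorphism $H_c^*(U)\cong H_c^{-\infty,*}(U)$ between compactly supported de Rham cohomology and the cohomology of compactly supported currents; the theorem is then the special case $U=M$, where compactness gives $\mathcal{E}_c(M)=\mathcal{E}(M)$ and $H_c^*(M)=H_{\text{DR}}^*(M)$.

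The inductive engine is the Mayer--Vietoris comparison, exactly as in \cite{BT82}. For a cover $M=U\cup V$ one has the Mayer--Vietoris short exact sequence of complexes of compactly supported forms and, by Proposition \ref{mayer-vietoris}, its counterpart (\ref{Mayer-Vietois-sq-01}) for compactly supported currents. The integration map commutes (up to the relevant signs) with the restriction, extension-by-zero, and signed-pushforward maps that build these sequences, so $\Phi$ is a morphism between the two short exact sequences, hence between the associated long exact sequences (\ref{exact-sq-1}); in particular it commutes with the connecting homomorphisms. Applying the five lemma to the resulting ladder, the statement for $U$, $V$ and $U\cap V$ implies it for $U\cup V$. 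Inducting on the number of sets in a finite good cover reduces everything to the base case of a single contractible chart.

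The base case, a chart diffeomorphic to $\R^m$, is where the analysis lives and is the step I expect to be the main obstacle. Here I would use mollification: the convolution $C_\varepsilon(T)=T\ast\rho_\varepsilon$ of a compactly supported current with an approximate identity $\rho_\varepsilon$ produces a smooth compactly supported form, it commutes with $d$, and a standard homotopy formula shows $C_\varepsilon$ to be chain homotopic to the identity. Hence every compactly supported current on $\R^m$ is cohomologous to a smooth compactly supported form, which identifies $H_c^{-\infty,*}(\R^m)$ with $H_c^*(\R^m)$ and reduces the base case to the classical Poincar\'e lemma for compact supports: the cohomology is $\R$ concentrated in top degree, with the isomorphism realized precisely by $\int_{\R^m}$. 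One checks directly that $\Phi$ is this integration map, hence an isomorphism.

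An equivalent and more global route, which is the one underlying de Rham's treatment cited in \cite{DeRham84}, bypasses the induction altogether: one constructs a regularization operator $R$ sending currents to smooth forms, together with a homotopy operator $A$, satisfying
\[ R-\id=dA+Ad, \]
with $R$ commuting with $d$ and restricting to a chain map homotopic to the identity on smooth forms. Surjectivity of $\Phi$ then follows because every closed current is cohomologous to the smooth closed form $R(T)$, and injectivity because a smooth closed form that is $d$ of a current becomes, after applying $R$, exactly $d$ of a smooth form. In either approach the genuine content is analytic---the regularization or mollification step, which produces a smooth representative of a current's cohomology class while controlling supports---whereas the homological bookkeeping (chain-map property, Mayer--Vietoris naturality, and the five lemma) is routine.
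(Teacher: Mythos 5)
Your proposal is correct, and it actually contains the paper's proof as its second route: the paper offers no argument of its own for this theorem, instead citing \cite{DeRham84}, and de Rham's proof there is precisely the regularization-operator argument you sketch --- operators $R$ and $A$ with $R-\id = dA + Ad$, $R$ commuting with $d$ and landing in smooth forms, from which injectivity and surjectivity of the induced map follow exactly as you say. Your primary route, the Mayer--Vietoris induction over a finite good cover with the five lemma and a mollification base case on $\R^m$, is genuinely different from the cited proof, but it is well aligned with the paper's own machinery: the author proves exactness of the Mayer--Vietoris sequence (\ref{Mayer-Vietois-sq-01}) for compactly supported currents and later runs this very Bott--Tu style argument to prove Theorem \ref{De-Rahm-2} for normal currents, where no smoothing operator preserving normality is available. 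The comparison is instructive: the regularization route is global, needs no good cover, and (crucially for this paper) gives support control --- $R(T)$ can be arranged to be supported near $\operatorname{supp} T$, which is the feature exploited in Theorem \ref{symplectic-smoothing-operator} --- whereas your inductive route trades that analytic construction for a single local mollification plus homological bookkeeping, at the cost of requiring a finite good cover and checking naturality of the integration pairing with respect to the signed pushforward maps. Both of your sign and convention checks are consistent with the paper's definitions: with $dT=(-1)^{m-q+1}\partial T$ one indeed gets $dT_\alpha = T_{d\alpha}$ on the nose, so the map is a chain map as claimed.
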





\section{Normal currents and the deformation theorem }\label{normal-currents}
\medskip \noindent
 \vskip 0.5cm
In the existing literature of geometric measure theory, the homology (or cohomology) theory of normal currents on a differential manifold $M$ is usually treated under the assumption that $M$ is a submanifold of some Euclidean space $\R^N$, c.f. \cite[ch. 5]{GSM98}. In this treatment, a compactly supported current $T$ is said to be normal in $M$ if it is a normal current in $\R^N$ in the sense of \cite[4.1.7]{F69}, and if its support is contained in $M$. This approach is sound due to Federer's flatness theorem, and leads directly to the homology theory of the manifold $M$.

However, to work with normal currents on a symplectic manifold, or on a manifold with some extra geometric structures, it will be more convenient to have an intrinsic description of normal currents on a differential manifold.  For this reason, we present in this section an elementary self-contained account of normal currents on a manifold without assuming that the manifold is embedded into an ambient Euclidean space. We then use normal currents to develop the cohomology theory
of a differential manifold. Since the Federer-Fleming deformation theorem plays a central role in our approach to symplectic Harmonic theory, we also include in this section a precise statement of the deformation theorem that we need.




 Let $(\cdot,\cdot)$ be an inner product on $\R^m$. For any $0\leq k\leq m$, the inner product induces a norm on $\wedge_k\R^m$ which we denote by $\vert\cdot \vert$. A $k$-vector
$w\in \wedge_k\R^m$ is called simple if
\[ w=v_1\wedge\cdots \wedge v_k\]
for some collection of vectors $\{v_1,\cdots,v_k\}\subset \R^m$. The {\bf comass} of a $k$-form $\varphi\in \wedge^k \R^m$ is defined as
\begin{equation}\label{comass}  \vert\vert \varphi\vert \vert:= \text{\,sup}\{ <\varphi, w>,\,\,w\in\wedge_k\R^m\,\text{is simple and}\, \vert w\vert\leq 1\}.
\end{equation}

Now let $M$ be a differential manifold. Choose a Riemannian metric $g$ on $M$. Then $\forall\, x\in M$,
the Riemannian metric induces an inner product on the tangent space $T_x M$. Thus for any differential form $\varphi\in \mathcal{E}(M)$,
 and for any $x\in M$, we have a pointwise co-mass $\vert\vert \varphi(x)\vert\vert$ which is defined as we described in Equation (\ref{comass}).

For a current $T \in \mathcal{E'}(M)$, its {\bf mass norm} is defined to be
\[ \vert\vert T\vert\vert=\text{sup}\,\{<T,\varphi>, \,\,\varphi\in \mathcal{E}(M),\, \vert\vert\varphi(x) \vert \vert\leq 1,\,\forall\, x\in M\}.\]


Let $K\subset M$ be a compact set, and let $\mathcal{E}_K'(M)$ be the set of De Rham currents on $M$ supported inside $K$. Then it is easy to see that different choices of Riemannian metrics on $M$ would yield equivalent mass norms on $\mathcal{E'}_K(M)$. As a result, if a compactly supported current $T\in\mathcal{E}'(M)$ has finite mass with respect to a given Riemmanian metric on $M$, it has finite mass with respect to any Riemannian metric on $M$. Thus the notion of a compactly supported current with {\bf finite mass} on $M$ is well defined without reference to any particular Riemannian metric.

\begin{definition}\label{normal-current} Let $V$ be an open subset of a differential manifold $M$ with a given Riemannian metric $g$.
A current $T$ is called a normal current in $V$ if it is compactly supported inside $V$ such that
\[{\bf N}(T):= \vert\vert T\vert\vert+\vert\vert\partial T\vert\vert<\infty.\]
For any compact subset $K$ in $U$, we define $N_K(V)$ to be the space of normal currents supported inside $K$, and define $N(V)$ to be
the vector space of normal currents supported inside $V$. We denote by $N^i(V)$ the space of normal currents supported inside $V$ which
are of degree $i$.

\end{definition}

\begin{remark} By the discussion in the paragraph preceding Definition \ref{normal-current}, the notion of normal currents on a differential manifold is independent of the choices of Riemannian metrics. \end{remark}


Note that if $M=U$ is an open subset of $\R^m$, and if the Riemannian metric $g$ is given by the standard Riemannian metric on $\R^m$,
the notion of normal currents given in Definition \ref{normal-current} is exactly the one given in \cite[4.1.7]{F69}. However, since
different choices of Riemannian metric on $U$ would result in equivalent mass norms on $\mathcal{E'}_K(U)$ for any compact subset $K\subset U$, our definition of normal currents is equivalent to the one given in \cite[4.1.7]{F69} even if $g$ is different from the standard metric on $\R^m$.  These observations lead to the following simple lemma.

\begin{lemma}\label{compatibility1} Let $M$ be an $m$ dimensional differential manifold, let $U$ and $V$ be open subsets of $\R^m$ and $M$ respectively, and let $ \varphi: U\rightarrow V$ be a diffeomorphism. Then the diffeomorphism $\varphi$ induces an isomorphism of the space of normal currents
\[ \varphi_{*}: N(U)\rightarrow N(V),\] where $N(U)$ is given in the sense of \cite[4.1.7]{F69}, and $N(V)$ is given as in Definition \ref{normal-current}.
\end{lemma}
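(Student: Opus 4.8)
The plan is to show that the pushforward $\varphi_*$ is a well-defined linear isomorphism that carries normal currents to normal currents in both directions, by reducing everything to the equivalence of mass norms under diffeomorphisms. First I would recall what must be verified: that $\varphi_*$ maps $N(U)$ into $N(V)$, that its inverse $(\varphi^{-1})_*$ maps $N(V)$ into $N(U)$, and that both are the mutually inverse pushforward maps already constructed in Section \ref{review-of-currents}. The algebraic fact that $\varphi_*$ and $(\varphi^{-1})_*$ are mutually inverse continuous linear maps on the full spaces $\mathcal{E}'(U)$ and $\mathcal{E}'(V)$, and that pushforward sign-commutes with $d$ and commutes with $\partial$, is already established in the excerpt, so the only genuine content is that the \emph{normality} condition ${\bf N}(T) = \vert\vert T\vert\vert + \vert\vert \partial T\vert\vert < \infty$ is preserved.

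The heart of the argument is a local comparison of mass norms. I would equip $U$ with the standard Euclidean metric (so that $N(U)$ in the sense of \cite[4.1.7]{F69} coincides with Definition \ref{normal-current}, as noted in the paragraph preceding the lemma) and equip $V$ with its given metric $g$. Take any $T \in N(U)$ with compact support $K \subset U$; then $\varphi(K)$ is a compact subset of $V$, and $\operatorname{supp}\varphi_*(T) = \varphi(K)$, so $\varphi_*(T)$ is compactly supported inside $V$. To bound the mass of $\varphi_*(T)$, pull the metric $g$ back to $U$ via $\varphi$, giving a Riemannian metric $\varphi^* g$ on $U$. By the discussion preceding Definition \ref{normal-current}, on the compact set $K$ the two metrics (standard and $\varphi^* g$) induce equivalent mass norms on $\mathcal{E}'_K(U)$; hence $\vert\vert T\vert\vert_{\text{std}} < \infty$ forces finiteness of the $\varphi^* g$-mass of $T$. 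The key identity is that the $g$-mass of $\varphi_*(T)$ in $V$ equals the $\varphi^* g$-mass of $T$ in $U$: for a unit-comass test form $\psi$ on $V$, the pullback $\varphi^*\psi$ has pointwise $\varphi^* g$-comass at most $1$ by the very definition of pulling back the metric, and $\langle \varphi_*(T), \psi\rangle = \langle T, \varphi^*\psi\rangle$. Taking suprema gives $\vert\vert \varphi_*(T)\vert\vert_g = \vert\vert T\vert\vert_{\varphi^* g} < \infty$. Applying the same reasoning to $\partial(\varphi_*(T)) = \varphi_*(\partial T)$, whose mass is likewise controlled, yields ${\bf N}(\varphi_*(T)) < \infty$, so $\varphi_*(T) \in N(V)$.

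Running the identical argument with $\varphi^{-1}$ in place of $\varphi$ shows $(\varphi^{-1})_*$ maps $N(V)$ into $N(U)$, and since these are the restrictions of mutually inverse maps on the ambient current spaces, $\varphi_*$ restricts to a linear isomorphism $N(U) \to N(V)$. I expect the main obstacle to be the metric-comparison step: one must check carefully that the equivalence of mass norms from different metrics holds uniformly over the relevant compact sets, and that pulling back a unit-comass form along a diffeomorphism keeps its comass bounded (exactly by the pulled-back metric), so that the supremum defining the mass transforms as an equality rather than merely an inequality in one direction. Everything else—compact support preservation, commutation with $\partial$, and the bijectivity—is either immediate or quoted from the earlier part of the paper.
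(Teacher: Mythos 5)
Your proposal is correct and follows essentially the same route as the paper, which offers no separate proof but presents the lemma as an immediate consequence of the preceding observations: that with the standard metric the two definitions of $N(U)$ coincide, and that mass norms on $\mathcal{E}'_K$ arising from different Riemannian metrics are equivalent on compact sets. Your write-up merely makes explicit the pullback-metric computation ($\vert\vert \varphi_{*}(T)\vert\vert_g = \vert\vert T\vert\vert_{\varphi^{*}g}$, and likewise for $\partial T$ via $\partial\varphi_{*}(T)=\varphi_{*}(\partial T)$) that the paper leaves implicit, which is a faithful elaboration rather than a different argument.
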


In an Euclidean space, polyhedral chains form an important subclass of normal currents. Since any differential manifold admits a polyhedral subdivision, the notion of polyhedral chain can easily be extended to a differential manifold. Indeed, polyhedral chains associated to a polyhedral subdivision has already been used extensively in \cite{DeRham84} to establish the Poincar\'e duality of a differential manifold.  Following closely the
 expositions given in \cite[Sec. 21]{DeRham84}, we give a quick
review of some basic facts on polyhedral chains that we need later in this paper.

 A $p$-simplex $\sigma$ in $\R^{m}$ is the convex hull of $p+1$ geometrically independent points
$\{v_0, v_1, \cdots, v_p\}$, i.e.,
\[ \sigma=\{x_0v_0+x_1v_1+\cdots + x_p v_p\in \R^{m},\,\,\, x_0+x_1+\cdots + x_p =1,\, x_i\geq 0,\, i=0,1,\cdots,p\}.\]
The $p$-simplex $\sigma$ is called an oriented $p$-simplex, and is denoted by $[v_0,v_1,\cdots,v_p]$, if it is equipped with an orientation induced by the multi-vector
\[(v_1-v_0)\wedge (v_2-v_0)\wedge\cdots \wedge (v_p-v_0).\]


\begin{definition}\label{polyhedral-chain-in-Rm} A compactly supported current $T$ in $\mathcal{E}'(\R^m)$ is called a $k$ dimensional real polyhedral chain in
$\R^m$ if there exist
real scalars $a_1,\cdots, a_l$ and oriented $k$-simplices $\sigma_1,\cdots, \sigma_l$ such that
\begin{equation}\label{non-over-lapping}  T(\phi)=\displaystyle \sum_{i=1}^l a_i\int_{\sigma_i}\phi,\,\,\,\forall\, \phi\in \mathcal{E}(\R^m).\end{equation}
\end{definition}

Let $M$ be an $m$ dimensional differential manifold. A set $\tau \subset M$ is called a $p$ dimensional {\bf cell}, if there is a $p$-simplex $\sigma$ in a subspace $\R^p$ of $\R^m$, and a diffeomorphism $\Phi$ of an open neighborhood of
$\sigma$ in $\R^m$ onto an open neighborhood of $\tau$ in $M$ which maps $\sigma$ onto $\tau$. The image by $\Phi$ of the interior points and frontier
points of $\sigma$ are called respectively interior points and frontier
points of the cell. Moreover, $\tau$ is called an oriented $p$ dimensional cell if the
$p$-simplex $\sigma$ is oriented. Clearly, each oriented cell defines a compactly supported current on $M$ in a canonical way.
From now on, we will not distinguish an oriented cell from the canonical current that it induces.

A {\bf polyhedra subdivision} of $M$ consists of a set $S$ of cells of $M$ that satisfies the following conditions:
 \begin{itemize} \item [1)] $S$ is locally finite, that is, every compact subset of $M$ meets only finitely many
cells of $S$;
\item [2)] Each point of $M$ is contained in the interior point of exactly one cell of $S$;

\item [3)] The set of frontier points of each cell of $S$ is the union of cells of $S$.

\end{itemize}

Fix a polyhedral subdivision $S$ on the differential manifold $M$. By abuse of language, we call an oriented cell $\tau$ of $M$ an oriented cell of $S$, if as a set, $\tau$ is a cell of $S$.  We say that $T$ is a $p$ dimensional polyhedral chain of the subdivision \footnote{It is called an odd chain of the subdivision in \cite[Sec.21]{DeRham84}. Since we are only going to work with oriented manifolds in this paper, we do not have to distinguish odd chains and even chains.}  if there exist oriented $p$ dimensional cells $\tau_1,\cdots, \tau_n$ of $S$, and scalars $a_1,a_2,\cdots, a_n$,
such that \[ T=\displaystyle \sum_{i=1}^na_i\tau_i.\]
The following result is an immediate consequence of \cite[Thm. 16]{DeRham84}.

\begin{theorem}\label{duality} Let $M$ be an oriented differential manifold with a given polyhedral subdivision $S$. Then for any compactly supported
closed differential form $\alpha$, there exists a polyhedral chain of the subdivision, say $T$, and a compactly supported current $\Gamma$, such that
$\alpha-T=d\Gamma$.
\end{theorem}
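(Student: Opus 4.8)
The plan is to view $\alpha$ as a compactly supported current and then represent its cohomology class by a polyhedral chain of $S$, the entire content being packaged in de Rham's structure theory for currents.

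First I would apply the chain map (\ref{chain-map-1}) to regard the closed $k$-form $\alpha$ as a compactly supported current of degree $k$; because $d\alpha=0$, this current is a cocycle in the complex (\ref{distributional-derham-complex}), so it determines a class $[\alpha]\in H_c^{-\infty,k}(M)$.

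The heart of the matter is de Rham's Theorem 16: for a fixed polyhedral subdivision $S$, the inclusion of the polyhedral chains of $S$ into the space of currents is a homology equivalence, so that every class of $H_c^{-\infty,*}(M)$ is carried by a closed polyhedral chain of $S$ of the same degree. Applying this to $[\alpha]$ produces an $(m-k)$-dimensional polyhedral chain $T$ of $S$ with $[T]=[\alpha]$ in $H_c^{-\infty,k}(M)$. Unwinding the definition of this cohomology, and using the sign convention (\ref{differential-of-current}) that relates $d$ to $\partial$ on currents, the equality $[\alpha]=[T]$ says exactly that $\alpha-T=d\Gamma$ for some compactly supported current $\Gamma$ of degree $k-1$, which is the assertion. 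Observe that $T$ is then automatically closed, since $dT=-d(d\Gamma)=0$.

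The only real obstacle lives inside de Rham's Theorem 16 itself, whose proof rests on the regularizing (smoothing) operators for currents together with the associated homotopy formula. The two points that need care are that a single fixed subdivision $S$ already carries a representative of every class, rather than requiring one to pass to finer and finer subdivisions, and that, since $\alpha$ and $T$ are both compactly supported, the homotopy operator can be arranged to return a $\Gamma$ that is again compactly supported. Granting de Rham's theorem, the deduction above is immediate, which is exactly why the statement is labeled an immediate consequence of \cite[Thm. 16]{DeRham84}.
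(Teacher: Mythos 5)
Your proposal is correct and follows the same route as the paper, which simply derives the statement as an immediate consequence of \cite[Thm.~16]{DeRham84} by regarding the closed compactly supported form $\alpha$ as a closed compactly supported current and representing its class by a chain of the fixed subdivision $S$. Your additional remarks on the sign convention relating $d$ and $\partial$ and on the compact support of $\Gamma$ are accurate but were left implicit in the paper.
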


It is important to note the invariance of normal currents under Lipschitzian maps. Let $U$ be an open set in $\R^n$, and $X\subset U$. A map $f: X\rightarrow \R^m$ is said to be an $L$-lipschitzian map from $X$ to $\R^m$ if there exists a constant $L\geq 0$ such that \[ \vert f(x)-f(y)\vert \leq L\vert x-y \vert,\,\,\,\,\forall\, x,y\in X.\] Every Lipschitzian map has a least Lipschitz constant, which is denoted by $Lip(f)$. A map $f:U\rightarrow \R^m$ is said to be locally Lipschitzian if for any compact subset $K\subset U$, $f\vert_K$ is a Lipschitzian map. In particular, any smooth map from $U$ to $\R^m$ is a locally Lipschitzian map. We refer to \cite[2.10.43, 4.1.14]{F69} for a proof of the following simple result.

\begin{lemma}\label{Lipschitz-invariance-1} Let $U$ and $V$ be open subsets of $\R^n$ and $\R^m$ respectively, let $K$ be a compact subset of $U$, and let $f:U\rightarrow V$ be a locally Lipschitzian map. Then for any $T\in N_{p,K}(U)$, we have that
\[ M(f_{*} T)\leq Lip(f\vert_K)^p M(T), \,\,\,M(f_{*} \partial T)\leq Lip(f\vert_K)^{p-1} M(\partial T) .\]
In particular, this implies that $f_{*} T \in N_{p, f(K)}(V)$. Here $Lip(f\vert_K)$ denotes the Lipschitz constant of the restriction of the map $f$ to the compact subset $K$.
\end{lemma}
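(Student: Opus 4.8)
The plan is to reduce the statement to two ingredients: a pointwise bound measuring how the $p$-th exterior power of a linear map distorts mass, and the representation of a finite-mass current by its mass measure together with an orienting $p$-vector field. First I would recall that, since $T\in N_{p,K}(U)$ has finite mass, the Riesz representation theorem for currents of finite mass furnishes a Radon measure $\mu_T$ concentrated on $K$ with $\mu_T(U)=M(T)$ and a $\mu_T$-measurable $p$-vector field $\vec{T}$, of unit norm $\mu_T$-almost everywhere with respect to the norm $\vert\cdot\vert$ dual to the comass, such that
\[ T(\varphi)=\int_K \langle \varphi(x),\vec{T}(x)\rangle\, d\mu_T(x),\qquad \varphi\in\mathcal{E}^p(U). \]
The algebraic input I would isolate is that for any linear map $L\colon\R^n\to\R^m$ the induced map $\wedge_p L$ satisfies $\vert\wedge_p L(\xi)\vert\leq \Vert L\Vert^p\,\vert\xi\vert$ on $p$-vectors: this is checked first for a simple $\xi=v_1\wedge\cdots\wedge v_p$, where $\wedge_p L(\xi)=Lv_1\wedge\cdots\wedge Lv_p$, and then extended by decomposing $\xi$ into simple summands. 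This is exactly the content of \cite[2.10.43]{F69}, and at a point where $f$ is differentiable one has $\Vert Df_x\Vert\leq Lip(f\vert_K)$ for $x\in K$.

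Next, in the case where $f$ is smooth, I would compute the pushforward directly. Since $(f_*T)(\varphi)=T(f^*\varphi)$ and $\langle f^*\varphi(x),\xi\rangle=\langle\varphi(f(x)),\wedge_p Df_x(\xi)\rangle$, the representation above gives
\[ (f_*T)(\varphi)=\int_K \langle\varphi(f(x)),\wedge_p Df_x(\vec{T}(x))\rangle\, d\mu_T(x). \]
Taking the supremum over all $\varphi$ whose pointwise comass is at most one, and using the exterior-power bound together with $\Vert Df_x\Vert\leq Lip(f\vert_K)$, yields
\[ M(f_*T)\leq \int_K Lip(f\vert_K)^p\,\vert\vec{T}(x)\vert\,d\mu_T(x)=Lip(f\vert_K)^p\,M(T), \]
which is the first estimate in the smooth case.

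For a merely Lipschitzian $f$ I would pass to the limit. Choosing smooth maps $f_j$ (for instance mollifications) converging uniformly to $f$ on $K$ with $Lip(f_j\vert_K)\to Lip(f\vert_K)$, one has $(f_j)_*T\to f_*T$ in the sense used to define the Lipschitz pushforward in \cite[4.1.14]{F69}; since the mass norm is lower semicontinuous under this convergence, the smooth bound for each $f_j$ passes to the limit and gives $M(f_*T)\leq Lip(f\vert_K)^p M(T)$. The boundary estimate then follows formally: because pushforward commutes with the boundary operator, $\partial(f_*T)=f_*(\partial T)$, and $\partial T$ is a $(p-1)$-dimensional current of finite mass since $T$ is normal, the bound just proved, applied to $\partial T$ with exponent $p-1$, gives $M(f_*\partial T)=M(\partial f_*T)\leq Lip(f\vert_K)^{p-1}M(\partial T)$. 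Finally $\supp(f_*T)\subseteq f(K)$, which is compact and contained in $V$, so ${\bf N}(f_*T)=M(f_*T)+M(\partial f_*T)<\infty$ shows $f_*T\in N_{p,f(K)}(V)$. The one genuinely delicate point, and the place where I expect to lean on Federer's machinery rather than a soft argument, is the very definition of $f_*T$ for a non-smooth Lipschitz $f$ together with the verification that the pointwise Jacobian bound survives at $\mu_T$-almost every point, since $\mu_T$ need not be absolutely continuous with respect to Lebesgue measure; this is precisely what the approximation-and-lower-semicontinuity scheme—equivalently, Rademacher's theorem combined with the flat-norm continuity of $f\mapsto f_*T$ established in \cite[4.1.14]{F69}—is designed to handle.
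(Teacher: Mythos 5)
Your overall scheme --- represent $T$ by a mass measure $\mu_T$ and a unit $p$-vector field $\vec{T}$, prove the estimate for smooth maps via the comass bound on $f^*\varphi$, then define $f_*T$ for merely Lipschitzian $f$ by smoothing and invoke lower semicontinuity of mass, with the boundary estimate coming from $\partial f_*=f_*\partial$ applied to $\partial T\in N_{p-1,K}(U)$ --- is exactly Federer's, and the paper offers no proof of its own: it simply cites \cite[2.10.43, 4.1.14]{F69}. But there is a genuine gap in how you obtain the stated constant. The pointwise claim that $\Vert Df_x\Vert\leq Lip(f\vert_K)$ for $x\in K$ is false: differentiability at $x$ tests increments in all directions of $U$, while $Lip(f\vert_K)$ controls only increments within $K$. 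Take $K$ a segment in the $x$-axis of $\R^2$ and $f(x,y)=(x,My)$; then $Lip(f\vert_K)=1$ but $\Vert Df_x\Vert=M$ at every point of $K$. The same defect infects your limiting step: mollifications $f_j$ of $f$ have derivative on $K$ bounded only by the Lipschitz constant of $f$ on a $1/j$-neighborhood of $K$, and these neighborhood constants need not tend to $Lip(f\vert_K)$ --- for $K=\{0,1\}\subset\R$ and $f$ with $f(0)=f(1)$ but oscillating in between, $Lip(f\vert_K)=0$ while the constant on every neighborhood of $K$ stays bounded away from $0$. So your argument proves only $M(f_*T)\leq \lim_{\delta\to 0^+}Lip(f\vert_{K_\delta})^p\,M(T)$, with $K_\delta$ the $\delta$-neighborhood of $K$, which is strictly weaker than the lemma.

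The missing ingredient is precisely the first half of the paper's citation: Kirszbraun's extension theorem \cite[2.10.43]{F69}. Extend $f\vert_K$ to a map $\bar{f}:\R^n\rightarrow\R^m$ with $Lip(\bar{f})=Lip(f\vert_K)$. Since $\bar{f}$ and $f$ agree on $K\supseteq\text{supp}\,T$, the affine homotopy $h(t,x)=(1-t)f(x)+t\bar{f}(x)$ is independent of $t$ on $\text{supp}\,T$, and the homotopy formula for normal currents yields $\bar{f}_*T=f_*T$ and $\bar{f}_*\partial T=f_*\partial T$ (this is also where normality, rather than mere finite mass, genuinely enters the definition of the Lipschitz pushforward). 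Mollifying $\bar{f}$ instead of $f$ produces smooth maps with global Lipschitz constant at most $Lip(f\vert_K)$, so the derivative bound holds everywhere, and your smooth-case computation plus lower semicontinuity of mass then delivers the sharp estimates as stated. Two smaller points: in the smooth computation the pairing with $\vec{T}(x)$ requires the mass norm on $p$-vectors (the norm dual to comass), and your decomposition into simple summands proves $\Vert\wedge_p L\,\xi\Vert\leq\Vert L\Vert^p\Vert\xi\Vert$ for the mass norm (which is an infimum over exactly such decompositions) but not, as written, for the Euclidean norm, so the norms must be kept straight; and while your weaker neighborhood-constant bound would in fact suffice for the only use the paper makes of this lemma (Proposition \ref{Lipschitz-invariance-2} needs only finiteness of ${\bf N}(f_*T)$), it does not prove the statement with the constant $Lip(f\vert_K)$ as claimed.
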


For any $T \in \mathcal{E}'(M)$, and $\alpha\in \mathcal{E}(M)$, set
\[ (\alpha \wedge T)(\phi)=T(\alpha \wedge \phi), \,\,\,\forall\,\phi\in \mathcal{E}(M).\]
The following simple lemma is quite useful in Section \ref{fundamental-result}.

\begin{lemma} \label{product-normal-chain} Let $K$ be a compact subset of $M$.
 If $T\in {\bf N}_K(M)$, then $\alpha \wedge T \in {\bf N}_K(M)$.

\end{lemma}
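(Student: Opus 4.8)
The goal is to verify the two defining properties of a normal current for $\alpha\wedge T$: that it is supported inside $K$, and that ${\bf N}(\alpha\wedge T)=\|\alpha\wedge T\|+\|\partial(\alpha\wedge T)\|<\infty$. The support claim is the easy half. If $\phi\in\mathcal{E}(M)$ is supported in an open set $V$ disjoint from $K$, then $\operatorname{supp}(\alpha\wedge\phi)\subseteq\operatorname{supp}\phi\subseteq V$, so $(\alpha\wedge T)(\phi)=T(\alpha\wedge\phi)=0$ because $T$ is supported in $K$. Hence $\operatorname{supp}(\alpha\wedge T)\subseteq\operatorname{supp}T\subseteq K$, and the same reasoning will apply to every current produced below.

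The first quantitative step is a mass bound of the form $\|\alpha\wedge T\|\le C\,\|T\|$ for a constant $C=C(\alpha,K)$. The plan is to work pointwise: at each $x$, the map $\beta\mapsto\alpha(x)\wedge\beta$ is a linear map $\wedge^{q}T_x^*M\to\wedge^{p+q}T_x^*M$ (with $p=\deg\alpha$), and since both spaces carry the pointwise comass norm, it has a finite operator norm $C(x)$, giving $\|\alpha(x)\wedge\phi(x)\|\le C(x)\,\|\phi(x)\|$. Because $\alpha$ and the metric are smooth, $C(x)$ depends continuously on $x$. To convert this into a global bound usable against $\|T\|$, I would exploit that $T$ is supported in the compact set $K$: choose a cutoff $\chi$ that is $1$ on a neighborhood of $K$ with compact support, so that $T(\alpha\wedge\phi)=T(\chi\alpha\wedge\phi)$ (the difference is supported off $K$). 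Then for any $\phi$ with $\|\phi(x)\|\le 1$ everywhere, $|T(\chi\alpha\wedge\phi)|\le\|T\|\sup_x\|\chi(x)\alpha(x)\wedge\phi(x)\|\le\|T\|\,\bigl(\sup_{x\in\operatorname{supp}\chi}|\chi(x)|\,C(x)\bigr)$, and the last supremum is finite since $\operatorname{supp}\chi$ is compact. Taking the supremum over such $\phi$ yields the desired bound $\|\alpha\wedge T\|\le C\|T\|<\infty$.

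For the boundary I would first record the Leibniz identity for currents. Unwinding the definitions $(\alpha\wedge T)(\phi)=T(\alpha\wedge\phi)$ and $\partial S(\phi)=S(d\phi)$, together with $d(\alpha\wedge\phi)=d\alpha\wedge\phi+(-1)^{p}\alpha\wedge d\phi$, gives
\[
\partial(\alpha\wedge T)=(-1)^{p}\bigl(\alpha\wedge\partial T-d\alpha\wedge T\bigr).
\]
Now $\partial T$ is again a compactly supported current with $\operatorname{supp}\partial T\subseteq K$ and finite mass (because $T$ is normal), and $d\alpha$ is a smooth form. Applying the mass estimate of the previous paragraph once with $\alpha$ and $\partial T$, and once with $d\alpha$ and $T$, we obtain $\|\alpha\wedge\partial T\|\le C_1\|\partial T\|<\infty$ and $\|d\alpha\wedge T\|\le C_2\|T\|<\infty$. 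Therefore $\|\partial(\alpha\wedge T)\|\le\|\alpha\wedge\partial T\|+\|d\alpha\wedge T\|<\infty$, and combining with the mass bound shows ${\bf N}(\alpha\wedge T)<\infty$. Since $\alpha\wedge T$ is also supported in $K$, it lies in ${\bf N}_K(M)$.

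I expect the only genuinely delicate point to be the pointwise comass estimate. The comass is not submultiplicative, so the argument must go through the operator norm of the linear map $\alpha(x)\wedge(\cdot)$ rather than any inequality of the form $\|\alpha\wedge\phi\|\le\|\alpha\|\,\|\phi\|$; and because $\alpha$ need not be compactly supported, the cutoff $\chi$ is what makes the passage from a pointwise, $x$-dependent constant to a single global constant legitimate, using crucially that $T$ lives on the compact set $K$. The Leibniz computation and the sign bookkeeping are routine.
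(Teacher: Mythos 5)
Your proof is correct and follows essentially the same route as the paper: a pointwise comass bound globalized using the compact support of $T$ (your cutoff $\chi$ plays the role of the paper's compact neighborhood $K_1$ with $K\subset\operatorname{int}K_1$), followed by the Leibniz identity $(-1)^{\deg\alpha}\,\partial(\alpha\wedge T)=\alpha\wedge\partial T-d\alpha\wedge T$ applied with the finite masses of $T$ and $\partial T$. The only cosmetic difference is that where you bound $\|\alpha(x)\wedge\phi(x)\|$ via the operator norm of the linear map $\beta\mapsto\alpha(x)\wedge\beta$, the paper invokes the inequality $\|(\alpha\wedge\phi)(x)\|\le\binom{p+q}{p}\,\|\alpha(x)\|\,\|\phi(x)\|$ from \cite[1.8.1]{F69}, so comass is in fact submultiplicative up to a binomial constant.
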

\begin{proof} We first claim that if $T\in \mathcal{E}'(M)$ has finite mass, then $\alpha\wedge T$ has finite mass. Without the loss of generality, we may assume that both $T$ and $\alpha$ are of homogeneous degrees. Suppose that $\text{supp}\, T\subset K$. Let $K_1$ be a compact subset of $M$ such that $K\subset \text{Inc}\, K_1$. Since $\text{supp}\, T\subset K$, we have that
\[ \vert \vert \alpha \wedge T\vert\vert =\text{sup}\,\{ T(\alpha \wedge \phi),\, \vert\vert\phi(x) \vert\vert \leq 1,\,\forall\, x\in K_1\}.\]
However, by \cite[1.8.1]{F69}, we have that
\[\vert\vert (\alpha\wedge \phi)(x)\vert\vert \leq \left(\begin{matrix} & p+q \\& p\end{matrix}\right) \cdot \vert \vert \alpha(x)\vert\vert \cdot \vert \vert\phi(x)\vert\vert,\, \forall \, x\in K_1,\] where $p$ and $q$ are the degrees of the form $\alpha$ and $\phi$ respectively.
It follows that \[\vert \vert \alpha \wedge T\vert\vert \leq \left(\begin{matrix} & p+q \\& p\end{matrix}\right) \cdot \vert\vert T \vert\vert \cdot \text{sup}_{x\in K_1}\,\vert \vert \alpha(x)\vert\vert.\]
This completes the proof of our claim. Next we observe that
\[(-1)^{\text{deg}\,\alpha} \partial (\alpha \wedge T)=\alpha \wedge \partial T - d\alpha \wedge T.\]
Therefore if both $T$ and $\partial T$ have finite mass, then $\partial (\alpha \wedge T)$ must have finite mass. This completes the proof of Lemma \ref{product-normal-chain}.


\end{proof}

By Lemma \ref{compatibility1}, on a coordinate neighborhood, our notion of normal current is equivalent to the one introduced in \cite[4.1.7]{F69}.
The next result asserts that on a differential manifold, any normal current is a finite sum of normal currents which each sits inside a coordinate neighborhood.

\begin{lemma} \label{partition} Let $V$ be an open subset of $M$, let $K\subset V$ be a compact set, and let $T \in N_{K}(V)$. Then there exist finite many coordinate neighborhoods $\{V_1,\cdots, V_r\}$ on $M$ which satisfy the following conditions.
\begin{itemize}

\item [1)] $K\subset \bigcup_i V_i\subset V$;

\item [2)] for any $1\leq i\leq r$, there exist a normal current $T_i\in N(V_i)$ supported inside $K$ such that
\[ T=\displaystyle\sum_{i=1}^r  T_i.\]

\end{itemize}

\end{lemma}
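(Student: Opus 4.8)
The plan is to reduce the statement to a smooth partition of unity argument, using Lemma \ref{product-normal-chain} to ensure that multiplying $T$ by a smooth cutoff keeps it inside the class of normal currents. First I would exploit the compactness of $K$ together with $K \subset V$: each point of $K$ lies in a coordinate chart contained in $V$, so finitely many such coordinate neighborhoods $V_1, \dots, V_r$ cover $K$, giving $K \subset \bigcup_{i=1}^r V_i \subset V$. This already secures condition (1), and reduces the lemma to producing a compatible splitting $T = \sum_i T_i$.

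Next I would take a smooth partition of unity subordinate to the finite cover $\{V_1, \dots, V_r\}$ of the compact set $K$: functions $\rho_i \in \mathcal{E}^0(M)$ with $\supp(\rho_i)$ a compact subset of $V_i$, satisfying $0 \le \rho_i \le 1$ and $\sum_{i=1}^r \rho_i \equiv 1$ on some open neighborhood $W$ of $K$. Since each $\rho_i$ is a $0$-form, the product $\rho_i \wedge T$, defined by $(\rho_i \wedge T)(\phi) = T(\rho_i \wedge \phi)$, makes sense, and I would set $T_i := \rho_i \wedge T$.

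The three desired properties then follow in order. That each $T_i$ is a normal current is exactly the content of Lemma \ref{product-normal-chain}, applied to the normal current $T$ supported in $K$ and the smooth form $\rho_i$. For the support, the elementary inclusion $\supp(\rho_i \wedge T) \subset \supp(\rho_i) \cap \supp(T)$ gives $\supp(T_i) \subset V_i \cap K$; hence $T_i$ is supported inside $K$, and inside the compact subset $\supp(\rho_i)$ of $V_i$, so that $T_i \in N(V_i)$. Finally, writing $f = \sum_{i=1}^r \rho_i$, so that $\sum_i T_i = f \wedge T$, I would note that $f - 1$ vanishes on $W \supset K \supset \supp(T)$; thus for every test form $\phi$ the form $(f-1)\wedge \phi$ is supported in $M \setminus W \subset M \setminus \supp(T)$, forcing $T((f-1)\wedge \phi) = 0$ and hence $(f \wedge T)(\phi) = T(\phi)$. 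Therefore $\sum_i T_i = T$.

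The argument is largely bookkeeping, and I anticipate no serious obstacle. The one step deserving care is the last verification that multiplication by a function equal to $1$ on a neighborhood of the support recovers the current; the essential structural input is Lemma \ref{product-normal-chain}, which is precisely what prevents the naive cutoff from escaping the space of normal currents, by keeping both the mass and the boundary mass finite.
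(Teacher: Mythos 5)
Your proof is correct and takes essentially the same route as the paper: the paper's own (very brief) argument also covers $K$ by finitely many coordinate neighborhoods inside $V$, cuts $T$ with a subordinate smooth partition of unity, and invokes Lemma \ref{product-normal-chain} to conclude that each piece $\rho_i\cdot T$ is a normal current supported in $V_i\cap K$. Your write-up merely fills in the details the paper leaves as ``an easy application of partition of unit,'' including the verification that $\sum_i \rho_i\wedge T = T$ because $\sum_i\rho_i\equiv 1$ on a neighborhood of $\supp T$.
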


\begin{proof} We observe that by Lemma \ref{product-normal-chain}, for any smooth function $\rho$ supported inside a coordinate neighborhood $U$, $\rho\cdot T$ is a normal current supported inside $U\cap K$.  So Lemma \ref{partition} follows from an easy application of partition of unit.




\end{proof}

Now let $M$ and $N$ be two differential manifolds. We say that a map $f: M\rightarrow N$ is locally Lipschitzian if for any $x\in M$, there is a coordinate chart $(U,\psi)$ of $M$ near $x$, and a coordinate chart $(V,\phi)$ of $N$ near $f(x)$, such that $\phi\circ f\circ \psi^{-1}: \psi(U)\rightarrow \varphi(V)$ is a Lipschitzian map. Combining Lemma \ref{compatibility1},  Lemma \ref{Lipschitz-invariance-1}, and Lemma \ref{partition}, we see immediately the following result.

\begin{proposition}\label{Lipschitz-invariance-2}  Let $M$ and $N$ be two differential manifolds, let $f: M\rightarrow N$ be a locally Lipschitzian map, and let $K\subset M$ be a compact set. Then $\forall\, T\in N_K(M)$, we have that $f_{*}(T)\in N_{f(K)}(N)$.
\end{proposition}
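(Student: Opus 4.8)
The plan is to reduce the assertion to its Euclidean analogue, Lemma \ref{Lipschitz-invariance-1}, by passing to local charts adapted to the local Lipschitz condition and then reassembling the pieces via the decomposition supplied by Lemma \ref{partition}. First I would exploit the compactness of $K$. For each $x\in K$, the definition of a locally Lipschitzian map between manifolds supplies coordinate charts $(U_x,\psi_x)$ on $M$ and $(W_x,\phi_x)$ on $N$ with $f(U_x)\subset W_x$ such that $g_x:=\phi_x\circ f\circ\psi_x^{-1}$ is Lipschitzian on $\psi_x(U_x)$. Since $K$ is compact, finitely many of the $U_x$, say $U_1,\dots,U_r$ with associated data $\psi_j,\phi_j,W_j,g_j$, already cover $K$. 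Running the partition-of-unity argument used to prove Lemma \ref{partition} subordinate to this particular cover, I obtain a decomposition $T=\sum_{j=1}^r T_j$ with $T_j=\rho_j\wedge T$ for bump functions $\rho_j$ supported in $U_j$; by Lemma \ref{product-normal-chain} each $T_j$ is a normal current supported in the compact set $U_j\cap K$.

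I would then analyze each summand $T_j$ separately, as a three-step transport through the charts. The map $\psi_j^{-1}$ is a diffeomorphism of an open subset of $\R^{\dim M}$ onto the open set $U_j\subset M$, so Lemma \ref{compatibility1} identifies $N(U_j)$ with the space of Euclidean normal currents on $\psi_j(U_j)$ and shows that $(\psi_j)_*T_j$ is a normal current in the sense of \cite[4.1.7]{F69}, supported in the compact set $\psi_j(U_j\cap K)$. Since $g_j$ is Lipschitzian, hence locally Lipschitzian, Lemma \ref{Lipschitz-invariance-1} applies and shows that $(g_j)_*(\psi_j)_*T_j$ is again a Euclidean normal current, now supported in $g_j(\psi_j(U_j\cap K))=\phi_j(f(U_j\cap K))$. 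Finally $\phi_j^{-1}$ is a diffeomorphism of an open subset of $\R^{\dim N}$ onto $W_j\subset N$, so a second application of Lemma \ref{compatibility1} turns this into a normal current on $N$ supported in $f(U_j\cap K)\subset f(K)$. Because the pushforward is covariantly functorial (being induced by the contravariant pullback through $f_*(T)(\phi)=T(f^*\phi)$) and $\phi_j^{-1}\circ g_j\circ\psi_j=f$ on $U_j$, this current is precisely $f_*T_j\in N_{f(K)}(N)$.

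It then remains only to add up the pieces: by linearity of the pushforward, $f_*T=\sum_{j=1}^r f_*T_j$ is a finite sum of normal currents, each supported in the common compact set $f(K)$, and such a sum is again normal since both the mass norm and the mass norm of the boundary are subadditive. Hence $f_*T\in N_{f(K)}(N)$, which is the desired conclusion.

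I expect the one genuine point requiring care to be the very first step: arranging the partition-of-unity decomposition of $T$ so that it is subordinate to a cover on which $f$ simultaneously satisfies the Euclidean Lipschitz hypothesis of Lemma \ref{Lipschitz-invariance-1}. Once the pairs of charts $(\psi_j,\phi_j)$ are fixed, everything else is formal, resting only on the naturality of the pushforward and on the two directions of Lemma \ref{compatibility1} (pushing down from the $M$-charts into $\R^{\dim M}$, and pushing back up from $\R^{\dim N}$ into the $N$-charts).
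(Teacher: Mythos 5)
Your proposal is correct and follows essentially the same route as the paper, which proves the proposition in one line by ``combining Lemma \ref{compatibility1}, Lemma \ref{Lipschitz-invariance-1}, and Lemma \ref{partition}''; your writeup simply supplies the details of exactly that combination (an adapted finite chart cover of $K$, the partition-of-unity decomposition, chart-by-chart transfer to the Euclidean statement, and reassembly by subadditivity of mass). The only cosmetic slip is writing that $T_j$ is supported in ``the compact set $U_j\cap K$'' --- that intersection need not be compact, but $\operatorname{supp} T_j\subset\operatorname{supp}\rho_j\cap K$ is, so nothing breaks.
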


Let $N$ be a differential manifold. We say that a $p$ dimensional compactly supported current $T$ of $N$ is a Lipschitz $p$-chain if there is a polyhedral $p$-chain
$Q$ of $\R^m$, and a locally Lipschitzian map $f: \R^m\rightarrow N$, such that $T=f_{*}(Q)$. We refer interested readers to \cite[ch.5]{GSM98} for a detailed treatment of Lipschitz chains in a manifold. For our purpose, we only need a simple property concerning Lipschitz chains. Roughly speaking, we show that given a Lipschitz $p$-chain $T$ of $N$,  if $p<\text{dim}\, N$, then the support of $T$ must be a proper subset of $N$.

Indeed, we will prove a more precise version of this property.  First we recall that for any $0\leq s<\infty$,
the $s$-dimensional Hausdorff measure $\mathcal{H}^s$ on $\R^m$ is well defined, c.f. \cite[2.10.2]{F69}. We will need the following simple result on
Hausdorff measure, and refer to \cite[2.10.2, 2.10.11]{F69} for detailed explanations.

\begin{lemma} \label{Hausdorff-measure-1} Let $f$ be a Lipschitzian map from $\R^n$ to $\R^m$, let $A$ be a Borel subset of $\R^n$, and let $0\leq s<\infty$.
Then we have that\[ \mathcal{H}^s(f(A))\leq (\text{Lip }f)^s\mathcal{H}^s(A).\]
\end{lemma}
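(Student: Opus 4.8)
The plan is to argue directly from the definition of the $s$-dimensional Hausdorff measure; the entire content of the lemma is the elementary fact that an $L$-Lipschitzian map enlarges diameters by a factor of at most $L$, and $\mathcal{H}^s$ is built out of diameters. Recall that for each $\delta>0$ one forms the size-$\delta$ approximating premeasure
\[ \mathcal{H}^s_\delta(E)=\inf\Big\{\sum_i (\text{diam}\,S_i)^s:\ E\subset\bigcup_i S_i,\ \text{diam}\,S_i\le\delta\Big\}, \]
(up to the usual normalizing constant, which appears on both sides of the desired inequality and cancels), and that $\mathcal{H}^s(E)=\lim_{\delta\to 0}\mathcal{H}^s_\delta(E)$, the limit being monotone increasing as $\delta$ decreases. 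Write $L:=\text{Lip}\,f$. The case $L=0$ is immediate, since then $f$ is constant and $f(A)$ is a single point, so I assume $L>0$.

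The heart of the argument is a single covering estimate. Let $\{S_i\}$ be an arbitrary countable cover of $A$ with $\text{diam}\,S_i\le\delta$ for all $i$. Then $\{f(S_i)\}$ covers $f(A)$, and the Lipschitz hypothesis gives $\text{diam}\,f(S_i)\le L\,\text{diam}\,S_i\le L\delta$, so $\{f(S_i)\}$ is an admissible family in the computation of $\mathcal{H}^s_{L\delta}(f(A))$. Consequently
\[ \mathcal{H}^s_{L\delta}(f(A))\le\sum_i(\text{diam}\,f(S_i))^s\le L^s\sum_i(\text{diam}\,S_i)^s. \]
Taking the infimum over all such covers $\{S_i\}$ of $A$ on the right-hand side yields $\mathcal{H}^s_{L\delta}(f(A))\le L^s\,\mathcal{H}^s_\delta(A)$.

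Finally I would let $\delta\to 0$; since $L>0$, the mesh $L\delta$ tends to $0$ simultaneously, and passing to the limit gives
\[ \mathcal{H}^s(f(A))=\lim_{\delta\to 0}\mathcal{H}^s_{L\delta}(f(A))\le L^s\lim_{\delta\to 0}\mathcal{H}^s_\delta(A)=L^s\,\mathcal{H}^s(A), \]
which is the assertion. I expect no genuine obstacle in this proof: the only point demanding a little care is that a mesh-$\delta$ cover of $A$ produces a mesh-$(L\delta)$ cover of $f(A)$, so one compares $\mathcal{H}^s_{L\delta}(f(A))$ against $\mathcal{H}^s_\delta(A)$ at shifted scales rather than at a common scale, which is harmless because both scales vanish together in the limit. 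The Borel hypothesis on $A$ is not actually used here, since the covering argument is valid for arbitrary subsets, and is presumably retained only for compatibility with the measure-theoretic framework in which the estimate is subsequently applied to images of lower-dimensional polyhedral chains under Lipschitzian maps.
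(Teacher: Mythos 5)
Your proof is correct. Note, however, that the paper itself does not prove this lemma at all: it is stated as a known fact with a pointer to Federer \cite[2.10.2, 2.10.11]{F69}, so there is no in-paper argument to compare against. What you have written is precisely the standard covering argument that underlies that citation: a size-$\delta$ cover $\{S_i\}$ of $A$ pushes forward to a size-$L\delta$ cover $\{f(S_i)\}$ of $f(A)$ with $(\mathrm{diam}\, f(S_i))^s \le L^s(\mathrm{diam}\, S_i)^s$, giving $\mathcal{H}^s_{L\delta}(f(A))\le L^s\mathcal{H}^s_\delta(A)$, and the two mesh scales vanish together as $\delta\to 0$. Your handling of the degenerate case $L=0$ is a sensible precaution (it also sidesteps the $0^0$ issue when $s=0$), and your closing observation is accurate: the Borel hypothesis on $A$ plays no role in this inequality, which holds for arbitrary subsets since Hausdorff measure is an outer measure defined by covers; the hypothesis is carried along only to match the measurability framework in which the lemma is later applied (Proposition \ref{Lipschitz-chain}, bounding the Hausdorff dimension of supports of Lipschitz chains). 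The only cosmetic caveat is the normalizing constant in the definition of $\mathcal{H}^s$, which you correctly note cancels from both sides. In short: a complete, self-contained elementary proof where the paper offers only a reference.
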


Next, the Hausdorff dimension of a set $A\subset \R^m$ is defined to be \[\mathcal{H}_{\text{dim}}(A)=\text{inf}\{0\leq s<\infty,\, \mathcal{H}^s(A)=0\},\]
where $\mathcal{H}^s(A)$ is the $s$-dimensional Hausdorff measure of $A$. Clearly, the notion of Hausdorff dimension has the following extension to a manifold.

\begin{definition} \label{Hausdorff} Let $M$ be an $m$ dimensional differential manifold, and let $A\subset M$. For any $0\leq s<m$, we say that the $s$-dimensional Hausdorff measure of $A$ is zero if for any coordinate chart $(U,\varphi)$ of $M$, the $s$-dimensional Hausdorff measure of $\varphi(A\cap U)\subset \R^m$ is zero. We define the Hausdorff dimension of $A$ to be the infimum of the set of non-negative real numbers $s$ such that the $s$-dimensional Hausdorff measure of $A$ is zero.
\end{definition}

The following property of Lipschitz chains follows immediately from Lemma \ref{Hausdorff-measure-1}.

\begin{proposition}\label{Lipschitz-chain} Let $M$ be an $m$ dimensional differential manifold, and let $T$ be a Lipschitz $p$-chain, $0\leq p\leq m$. Then the Hausdorff dimension of the support of $T$ is at most $p$. In particular, if $p<m$, then the support of $T$ must be a proper subset of $M$, since the Hausdorff dimension of $M$ is clearly $m$.
\end{proposition}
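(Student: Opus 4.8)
The plan is to bound the support of $T$ by the $f$-image of the support of the underlying polyhedral chain, and then to control the Hausdorff dimension of that image by means of the Lipschitz estimate in Lemma \ref{Hausdorff-measure-1}. Write $T=f_{*}(Q)$, where $Q$ is a polyhedral $p$-chain in some $\R^{k}$ and $f:\R^{k}\to M$ is locally Lipschitzian.

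First I would record the elementary containment $\text{supp}\,T\subseteq f(\text{supp}\,Q)$, which is a standard property of the pushforward of a compactly supported current under a (locally) Lipschitzian map, cf. \cite[4.1.14]{F69}; concretely, if $\phi\in\mathcal{E}(M)$ is supported away from $f(\text{supp}\,Q)$, then the corresponding test object is supported away from $\text{supp}\,Q$, so that $T(\phi)=0$. Since $Q$ is a finite combination of oriented $p$-simplices, its support $\text{supp}\,Q$ is a compact finite union of $p$-simplices; in particular $f(\text{supp}\,Q)$ is compact, and $\mathcal{H}^{s}(\text{supp}\,Q)=0$ for every $s>p$, so the Hausdorff dimension of $\text{supp}\,Q$ is at most $p$.

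Next I would transfer this bound through $f$. Because $\text{supp}\,Q$ is compact and $f$ is locally Lipschitzian, I can cover $\text{supp}\,Q$ by finitely many open sets $W_{1},\dots,W_{r}\subset\R^{k}$ together with coordinate charts $(V_{j},\phi_{j})$ of $M$ such that $f(W_{j})\subset V_{j}$ and each $\phi_{j}\circ f|_{W_{j}}$ is Lipschitzian. Putting $A_{j}:=\text{supp}\,Q\cap\overline{W_{j}}$, each $A_{j}$ is compact with $\mathcal{H}^{s}(A_{j})=0$ for $s>p$, whence Lemma \ref{Hausdorff-measure-1} gives $\mathcal{H}^{s}\bigl(\phi_{j}(f(A_{j}))\bigr)\le \bigl(\operatorname{Lip}(\phi_{j}\circ f|_{A_{j}})\bigr)^{s}\,\mathcal{H}^{s}(A_{j})=0$. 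As $f(\text{supp}\,Q)=\bigcup_{j}f(A_{j})$ and each $\phi_{j}(f(A_{j}))$ is $\mathcal{H}^{s}$-null, this shows, in the sense of Definition \ref{Hausdorff}, that $f(\text{supp}\,Q)$ has vanishing $s$-dimensional Hausdorff measure for all $s>p$; the independence from the choice of chart is automatic, since transition functions are smooth, hence locally Lipschitzian, and Lipschitzian maps preserve $\mathcal{H}^{s}$-null sets by Lemma \ref{Hausdorff-measure-1} again. Combined with the containment of the previous paragraph, this yields $\mathcal{H}_{\text{dim}}(\text{supp}\,T)\le p$.

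The final clause is then immediate: if $p<m$, then $\mathcal{H}_{\text{dim}}(\text{supp}\,T)\le p<m=\mathcal{H}_{\text{dim}}(M)$, so $\text{supp}\,T$ cannot be all of $M$. I expect the only genuinely delicate point to be the passage from the Euclidean statement of Lemma \ref{Hausdorff-measure-1} to the manifold setting: since $f$ maps into $M$ rather than into a Euclidean space, one must localize through coordinate charts and rely on the chart-independence built into Definition \ref{Hausdorff}, and it is precisely here that the compactness of $\text{supp}\,Q$ and the local Lipschitz property of $f$ are both used.
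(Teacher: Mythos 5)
Your proof is correct and follows exactly the route the paper intends: the paper offers no written argument beyond asserting that the proposition ``follows immediately from Lemma \ref{Hausdorff-measure-1},'' and your writeup is precisely the expansion of that assertion (containment $\text{supp}\,T\subseteq f(\text{supp}\,Q)$, the bound $\mathcal{H}^s(\text{supp}\,Q)=0$ for $s>p$, and the transfer through charts via the Lipschitz estimate, consistent with Definition \ref{Hausdorff}). The localization and chart-independence details you supply are routine but correctly identified as the only points requiring care.
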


Since any polyhedral chain in $\R^m$ must be a normal current, c.f. \cite[4.1]{F69}, by Proposition \ref{Lipschitz-invariance-2} we see that any Lipschitz chain in a differential manifold must be a normal current. This clearly implies the following result.

\begin{proposition}\label{polyhedral-chain-mfld}  Let $M$ be an $m$ dimensional differential manifold with a given polyhedral subdivision. Then
every polyhedral chain $T$ of the subdivision must be a normal current in $M$. \end{proposition}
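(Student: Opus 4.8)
The plan is to reduce the statement to the case of a single cell by using that normal currents supported in a fixed compact set form a vector space, and then to exhibit each oriented cell as the pushforward of an oriented simplex under a diffeomorphism, which makes it a normal current by the observation recorded immediately before this proposition. First I would note the vector-space property: if $T_1,T_2\in N_K(M)$ and $a_1,a_2$ are scalars, then $\text{supp}(a_1T_1+a_2T_2)\subset \text{supp}(T_1)\cup\text{supp}(T_2)\subset K$, and since $\vert\vert\cdot\vert\vert$ and $\vert\vert\partial(\cdot)\vert\vert$ are subadditive and homogeneous, one has ${\bf N}(a_1T_1+a_2T_2)\leq |a_1|{\bf N}(T_1)+|a_2|{\bf N}(T_2)<\infty$, so $a_1T_1+a_2T_2\in N_K(M)$. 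Writing the polyhedral chain of the subdivision as $T=\sum_{i=1}^n a_i\tau_i$, it therefore suffices to prove that each oriented $p$-dimensional cell $\tau_i$ is a normal current in $M$; setting $K=\bigcup_i\tau_i$ (a finite union of compact sets, hence compact) then gives $T\in N_K(M)$.

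Next I would treat a single oriented cell $\tau=\tau_i$. By the definition of a cell there is an oriented $p$-simplex $\sigma$ in a subspace $\R^p$ of $\R^m$, an open neighborhood $U$ of $\sigma$ in $\R^m$, and a diffeomorphism $\Phi:U\to\Phi(U)\subset M$ carrying $\sigma$ onto $\tau$. Since integration against $\Phi^*\phi$ over $\sigma$ equals integration against $\phi$ over $\tau$ by change of variables, we have $\tau=\Phi_*(\sigma)$ as currents. The oriented simplex $\sigma$ is a polyhedral $p$-chain in $\R^m$ (a single term with coefficient $1$), hence a normal current by \cite[4.1]{F69}; as its support is exactly the compact set $\sigma\subset U$, we get $\sigma\in N_\sigma(U)$. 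A diffeomorphism is smooth, hence locally Lipschitzian, so Proposition \ref{Lipschitz-invariance-2}, applied to the map $\Phi:U\to M$ and the compact set $\sigma$, yields $\tau=\Phi_*(\sigma)\in N_{\Phi(\sigma)}(M)=N_\tau(M)$. Thus each cell is a normal current, and combined with the first paragraph this finishes the proof.

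The main obstacle here is not any analytic estimate but a point of bookkeeping that the earlier results are already arranged to absorb: the diffeomorphism defining a cell is only given on a neighborhood $U$ of $\sigma$, not on all of $\R^m$, so one should invoke Proposition \ref{Lipschitz-invariance-2} for a locally Lipschitzian map between the manifolds $U$ and $M$ rather than trying to realize $\tau$ literally as a Lipschitz chain $f_*(Q)$ with $f$ defined on the whole of $\R^m$. Everything else, namely the normality of a single simplex in $\R^m$ and the closure of normal currents under finite linear combinations, is immediate from the definitions and cited facts already in place.
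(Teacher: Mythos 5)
Your proof is correct and follows essentially the same route as the paper, which derives the result from the normality of polyhedral chains in $\R^m$ (Federer [4.1]) together with Lipschitz invariance of normal currents via Proposition \ref{Lipschitz-invariance-2}. The only difference is that you spell out the linearity step and apply Proposition \ref{Lipschitz-invariance-2} to the map $\Phi\colon U\to M$ on the neighborhood $U$ of the simplex---a bookkeeping point the paper absorbs into its remark that Lipschitz chains are normal and ``this clearly implies'' the proposition.
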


\begin{definition}\label{cohomology-normal-currents} Consider the differential complex of normal currents

\begin{equation}\label{derham-complex-normal-currents}0\rightarrow N^0(M)\xrightarrow{d}N^1(M)\xrightarrow{d}\cdots \xrightarrow{d}N^{m}(M)\rightarrow 0.\end{equation}
We define the $i$-th cohomology of normal currents on $M$, denoted by $H^i_{\text{nor}}(M,\R)$, to be the $i$-th cohomology of the differential complex
(\ref{derham-complex-normal-currents}).

\end{definition}

The following Poincar\'e lemma is an immediate consequence of \cite[4.1.10]{F69} and Lemma \ref{compatibility1}.

\begin{lemma} Let $V$ be an $m$ dimensional  differential manifold which is homeomorphic to an open ball in $\R^m$. Then we have that

\[ H_{\text{nor}}^{i}(V,\R)= \begin{cases} 0,\,\,\,\,\text{if}\,\, 0\leq i<m\\
\R,\,\,\,\,\text{if}\,\, i=m.\end{cases}\]
\end{lemma}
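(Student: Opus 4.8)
The plan is to reduce to the standard ball in $\R^m$ and then import Federer's cone construction. We may assume $V$ is presented as a coordinate ball, so fix a diffeomorphism $\varphi\colon U\to V$ onto an open ball $U\subset\R^m$ equipped with its standard metric. By Lemma \ref{compatibility1} the pushforward $\varphi_*$ identifies $N(U)$ with $N(V)$ degree by degree, and since $\dim U=\dim V=m$ the sign in the identity $d\varphi_*=(-1)^{m+m}\varphi_* d=\varphi_* d$ is trivial, so $\varphi_*$ is an isomorphism of the complexes (\ref{derham-complex-normal-currents}) attached to $U$ and to $V$. Hence $H^i_{\text{nor}}(V,\R)\cong H^i_{\text{nor}}(U,\R)$ for all $i$, and it suffices to compute the cohomology of normal currents on the Euclidean ball $U$, where, again by Lemma \ref{compatibility1}, our normal currents are exactly those of \cite[4.1.7]{F69}.

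Next I would pass from the $d$-complex to Federer's boundary operator. A normal current of degree $i$ has dimension $m-i$, and on the summand $N^i(U)$ the relation $dT=(-1)^{m-q+1}\partial T$ reads $d=c_i\,\partial$ for a fixed sign $c_i=\pm1$; multiplying by a nonzero scalar changes neither kernels nor images, so $H^i_{\text{nor}}(U,\R)$ equals the homology of the complex of normal currents on $U$ under $\partial$ in dimension $m-i$. Thus the ranges $0\le i<m$ and $i=m$ correspond precisely to the positive-dimensional and the zero-dimensional normal homology of the ball.

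The substance of the statement is then the cone construction of \cite[4.1.10]{F69}. Taking the center $a$ of $U$, one forms for each compactly supported normal current $T$ the cone $C_a(T)$ with vertex $a$; it is supported in the compact convex hull of $\{a\}\cup\supp T$, which lies in $U$ because $U$ is convex, and it satisfies
\[ \partial\,C_a(T)=T-C_a(\partial T)\quad(\dim T\ge 1),\qquad \partial\,C_a(T)=T-T(1)\,\delta_a\quad(\dim T=0).\]
When $\dim T\ge1$ any cycle $\partial T=0$ is therefore a boundary, $T=\partial\,C_a(T)$, so this homology vanishes and $H^i_{\text{nor}}(U,\R)=0$ for $0\le i<m$ (in top dimension $m$ the cone is forced to vanish, and the same formula gives $T=C_a(\partial T)=0$ for a cycle). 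When $\dim T=0$ the second formula shows every current is homologous to $T(1)\,\delta_a$, while the augmentation $T\mapsto T(1)$ kills boundaries; hence $T\mapsto T(1)$ induces the isomorphism $H^m_{\text{nor}}(U,\R)\cong\R$.

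The one non-formal ingredient is that $C_a(T)$ is again a normal current, i.e. that $\vert\vert C_a(T)\vert\vert$ and $\vert\vert\partial\,C_a(T)\vert\vert$ remain finite; this is exactly the mass estimate for cones in \cite[4.1.10]{F69}, and it is the step I expect to be the real obstacle were one to seek a self-contained proof. Granting it, everything else — the transport by $\varphi_*$, the sign bookkeeping relating $d$ and $\partial$, and the degree-to-dimension reindexing — is routine, so the lemma follows at once from \cite[4.1.10]{F69} and Lemma \ref{compatibility1}.
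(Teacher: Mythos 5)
Your proposal is correct and follows exactly the route the paper takes: the paper proves this lemma simply by declaring it an immediate consequence of \cite[4.1.10]{F69} and Lemma \ref{compatibility1}, which is precisely your reduction via $\varphi_*$ to the Euclidean ball followed by Federer's cone construction. Your write-up merely makes explicit the details (the $d$ versus $\partial$ sign bookkeeping, the degree-to-dimension reindexing, the cone formulas in positive and zero dimension, and the mass estimates guaranteeing normality of the cone) that the paper leaves to the cited reference.
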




Observe that if $M$ is an $m$ dimensional manifold covered by two open sets $U$ and $V$, then
 sequences of inclusions
\[ M\longleftarrow U \sqcup V \longleftarrow U\cap V\]
give rise to sequences of pushforward maps
\begin{equation*}
\begin{aligned}
     N^i(M)\xleftarrow{\text{sum}} N^i(U)\oplus N^i(V)
                      \xleftarrow{\text{signed pushforward}} & N^i(U\cap V)\\
    (-i_*(T), i_*(T)) \xleftarrow[\rule{2.39cm}{0cm}]{} & T
\end{aligned}
\end{equation*}
This provides us a Mayer-Vietoris sequence
\begin{equation}\label{Mayer-Vietois-sq-02}
0\longleftarrow N^i(M)\longleftarrow N^i(U)\oplus N^i(V) \longleftarrow N^i(U\cap V)\longleftarrow 0.
\end{equation}

\begin{proposition}\label{mayer-vietoris}
The Mayer-Vietoris sequence (\ref{Mayer-Vietois-sq-02}) of compactly supported currents is exact.

\end{proposition}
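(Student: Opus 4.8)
The plan is to run the standard covariant Mayer--Vietoris argument, exactly as for compactly supported forms in \cite[Prop. 2.7]{BT82} and as already invoked for the sequence (\ref{Mayer-Vietois-sq-01}), while checking at each step that every current produced is again \emph{normal}. The only ingredient beyond the compactly supported case is that multiplication by a smooth function sends normal currents to normal currents; this is precisely Lemma \ref{product-normal-chain}, and it is what makes the partition of unity construction legitimate in the present setting.

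First I would prove surjectivity of the sum map $N^i(U)\oplus N^i(V)\to N^i(M)$. Given $W\in N^i(M)$, choose a smooth partition of unity $\{\rho_U,\rho_V\}$ subordinate to $\{U,V\}$, with $\rho_U+\rho_V\equiv 1$ on a neighborhood of $\supp W$, $\supp\rho_U\subset U$ and $\supp\rho_V\subset V$. Then $\rho_U\wedge W$ is supported inside the compact set $\supp\rho_U\cap\supp W\subset U$ and is normal by Lemma \ref{product-normal-chain}, and likewise $\rho_V\wedge W\in N^i(V)$; since $W=\rho_U\wedge W+\rho_V\wedge W$, the pair $(\rho_U\wedge W,\rho_V\wedge W)$ maps to $W$. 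Injectivity of the signed pushforward $T\mapsto(-i_*T,i_*T)$ is immediate, because the inclusion $U\cap V\hookrightarrow U$ induces an injective pushforward (a homeomorphism onto its image, cf. (\ref{topological-embedding})), so $i_*T=0$ forces $T=0$; and $i_*(-i_*T)+i_*(i_*T)=0$ yields the inclusion $\operatorname{im}\subseteq\ker$ at the middle term.

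The one place demanding genuine care, and the step I expect to be the main obstacle, is exactness in the middle. Suppose $(S,T)\in N^i(U)\oplus N^i(V)$ satisfies $i_*S+i_*T=0$ in $N^i(M)$, so that the current $P:=i_*S=-i_*T$ is supported in $\supp S\cap\supp T\subset U\cap V$. Since pushforward along an open inclusion preserves supports, both $S$ and $T$ are in fact supported in the compact subset $\supp P\subset U\cap V$. The subtle point is that $P$, a priori known to be normal only as a current on $U$ (and on $V$), must be seen to be normal as a current on $U\cap V$; this holds because the mass norm is intrinsic in the sense of the discussion preceding Definition \ref{normal-current}, so that for a current whose support is the compact set $\supp P\subset U\cap V$ it is insensitive to the choice of ambient open set. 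Hence $P$ comes from a well-defined $R\in N^i(U\cap V)$, and tracing through the identifications $S=i_*R$ and $T=-i_*R$ shows that $(S,T)$ is the signed pushforward of $-R$, establishing $\ker\subseteq\operatorname{im}$ and completing the proof.
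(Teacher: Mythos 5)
Your proof is correct and follows essentially the same route as the paper: the decisive step is surjectivity of the sum map $N^i(U)\oplus N^i(V)\to N^i(M)$, which you establish exactly as the paper does, via a partition of unity together with Lemma \ref{product-normal-chain}. The paper declares this the only point needing verification, whereas you additionally spell out exactness at the middle term --- in particular the observation that a normal current on $U$ compactly supported inside $U\cap V$ is normal as a current on $U\cap V$, since the mass norm is insensitive to the choice of ambient open set containing the compact support --- which is a sound and worthwhile filling-in of what the paper treats as routine.
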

\begin{proof} The only thing needs to be checked is that the map
\begin{equation} \label{surjectivity-02} N^i(M)\xleftarrow{\text{sum}} N^i(U)\oplus N^i(V) \end{equation}
is surjective. Let $T\in N^i(M)$, and let $\{\rho_U,\rho_V\}$ be a partition of unit surbordinate to the open cover
$\{U,V\}$. Then by Lemma \ref{product-normal-chain}, we have that $\rho_U\cdot T\in N^i(U)$, and $\rho_V\cdot T\in N^i(V)$.
It follows that Map (\ref{surjectivity-02}) must be surjective.

\end{proof}

Then standard facts in homological algebra gives us the following long
exact sequence
\begin{equation}\label{exact-sq-2} \cdots \leftarrow H_{\text{nor}}^{i}(U\cup V)\leftarrow  H_{\text{nor}}^i(U)\oplus H_{\text{nor}}^i(V)\leftarrow H_c^{-\infty,i}(U\cap V) \leftarrow H_{\text{nor}}^{
i-1}(U\cup V)\leftarrow \cdots
\end{equation}

Note that the chain map given by the inclusion
\[ N(M)\hookrightarrow \mathcal{E}'(M)\]
induces a natural homomorphism of cohomologies
\begin{equation}\label{inlusion-map} H^*_{\text{nor}}(M)\rightarrow H_c^{\infty,*}(M).\end{equation}

In view of the exact sequences (\ref{exact-sq-1}) and (\ref{exact-sq-2}), the standard Mayer-Vietoris
argument as explained in \cite[Sec.5]{BT82} provides us the following result.

\begin{theorem}\label{De-Rahm-2} For any compact differential manifold $M$, the natural homomorphism
(\ref{inlusion-map}) is an isomorphism.
\end{theorem}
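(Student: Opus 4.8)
The plan is to run the standard Mayer--Vietoris induction over a finite good cover, exactly as in \cite[Sec. 5]{BT82}, comparing the two cohomology theories degree by degree via the five lemma. The inclusion $N(M)\hookrightarrow \mathcal{E}'(M)$ is a chain map, and since it manifestly commutes with the extension-by-zero pushforwards $i_*$ and with the differential $d$, it is compatible with the two Mayer--Vietoris sequences: for any decomposition $M=U\cup V$ it produces a commutative ladder whose rows are the long exact sequences (\ref{exact-sq-2}) and (\ref{exact-sq-1}) and whose vertical arrows are all induced by (\ref{inlusion-map}). Once this ladder is in place, the remainder of the argument is a bookkeeping exercise, provided the map is already known to be an isomorphism on the building blocks of the cover.

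The base case is the assertion for an open set $V$ diffeomorphic to an open ball in $\R^m$. On the normal-current side, the Poincar\'e lemma proved above (a consequence of \cite[4.1.10]{F69} and Lemma \ref{compatibility1}) gives $H^i_{\text{nor}}(V,\R)=0$ for $0\le i<m$ and $H^m_{\text{nor}}(V,\R)=\R$. On the full-current side, de Rham's regularization theorem \cite{DeRham84} identifies $H_c^{-\infty,i}(V)$ with the compactly supported de Rham cohomology of $V$, which is $\R$ in degree $m$ and $0$ otherwise. Both theories are thus concentrated in top degree, and it remains to check that (\ref{inlusion-map}) is nonzero there. A generator of $H^m_{\text{nor}}(V,\R)$ is represented by a $0$-dimensional normal current, for instance a Dirac current $\delta_p$ at an interior point $p$. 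Evaluating a degree-$m$ current on a test function $\phi$ that is identically $1$ on a large compact subset of $V$ defines a linear functional that annihilates boundaries, since $\partial S(\phi)=S(d\phi)=0$ for such $\phi$, and it sends $\delta_p$ to $1$. Hence $[\delta_p]$ remains a generator after inclusion, and (\ref{inlusion-map}) is an isomorphism on $V$.

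With the base case settled, I would fix a finite good cover $\{U_1,\dots,U_r\}$ of the compact manifold $M$, that is, one in which every nonempty finite intersection is diffeomorphic to $\R^m$; such a cover exists for any compact manifold. The induction is the usual double induction of \cite[Sec. 5]{BT82}: writing $U=U_1\cup\cdots\cup U_{r-1}$ and $V=U_r$, the intersection $U\cap V$ is covered by the $r-1$ sets $U_i\cap U_r$, each again diffeomorphic to $\R^m$. By the inductive hypothesis (\ref{inlusion-map}) is an isomorphism for $U$, for $V$, and for $U\cap V$; feeding these into the commutative ladder of long exact sequences and applying the five lemma yields the isomorphism for $U\cup V$. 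After $r$ steps this gives the result for $M$.

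I expect the only genuine content to lie in the base case, and specifically in the identification of $H_c^{-\infty,*}$ on a ball together with the verification that the normal-current generator maps to a generator; the compatibility of the inclusion with the Mayer--Vietoris maps and the five-lemma chase are purely formal. The one point that must not be overlooked is that a \emph{good} cover is required rather than an arbitrary finite cover, so that every intersection entering the induction is again a contractible coordinate ball to which both Poincar\'e lemmas apply.
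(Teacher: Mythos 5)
Your proposal is correct and follows essentially the same route as the paper, whose entire proof is the one-line invocation of ``the standard Mayer--Vietoris argument as explained in \cite[Sec.5]{BT82}'' applied to the long exact sequences (\ref{exact-sq-1}) and (\ref{exact-sq-2}). Your write-up merely makes explicit the details the paper leaves implicit: the base case via the two Poincar\'e lemmas, the check that the Dirac current $\delta_p$ maps to a generator, and the double induction over a finite good cover.
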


We will need the rescaled version of the Federer-Fleming deformation theorem. We conclude this section by giving the precise statement of the deformation theorem we need.

\begin{theorem}( \cite[4.2.9]{F69}, \cite[Sec. 5.1]{GSM98})\label{F-F-deformation} Let $\epsilon>0$, and let $T\in N_p(\R^N)$. Then there is a decomposition of $T$ as follows,
\[T=P+\partial R+S,\] where $P$ is a $p$ dimensional polyhedral chain in $\R^N$, and $R$ and $S$ are normal currents in $\R^N$ of dimension $p+1$ and $p$ respectively. Moreover, we have that
\[\begin{split} &\text{supp}\, P \cup \text{supp} R \subset \{ x, \,\text{disc}(x,\text{supp}\,T)\leq 2N\epsilon\},
\\&\text{supp}\, S\subset \{ x, \,\text{disc}(x,\text{supp}\,\partial T)\leq 2N\epsilon\}.\end{split}\]

\end{theorem}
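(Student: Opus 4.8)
The plan is to reduce to unit scale by a homothety and then to push $T$ onto the $p$-skeleton of the standard integer cubical grid. For the reduction, let $\mu_\epsilon\colon\R^N\to\R^N$ be the dilation $x\mapsto\epsilon x$. Since $\mu_\epsilon$ is linear, hence bi-Lipschitz, and commutes with $\partial$, Lemma \ref{Lipschitz-invariance-1} shows that $(\mu_{1/\epsilon})_* T$ is again a normal current; establishing the decomposition for the unit grid and then applying $(\mu_\epsilon)_*$ recovers the $\epsilon$-rescaled statement, with every length scale — in particular the displacement bound — multiplied by $\epsilon$. So it suffices to treat $\epsilon=1$, i.e. the cubical complex with vertices in $\Z^N$.

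The core of the argument is a sequence of radial projections collapsing $T$ onto the $p$-skeleton. Working from the top dimension downward, for each $k=N,N-1,\dots,p+1$ and each closed $k$-cube $Q$ of the grid one selects an interior center $a_Q$ and uses central projection $\psi_{a_Q}$ from $a_Q$ onto $\partial Q$; patching these over all $k$-cubes yields a map $f_k$ that is Lipschitz off the centers and retracts the $k$-skeleton onto the $(k-1)$-skeleton. The composite $f=f_{p+1}\circ\cdots\circ f_N$ is then a Lipschitz retraction onto the $p$-skeleton, so $P:=f_* T$ is supported on the $p$-skeleton and is therefore a polyhedral $p$-chain. The affine homotopy $h$ joining the identity to $f$ gives, through the homotopy formula for currents,
\[
f_* T - T = \partial\, h_*\big([0,1]\times T\big) + h_*\big([0,1]\times \partial T\big).
\]
Setting $R:=-\,h_*([0,1]\times T)$ and $S:=-\,h_*([0,1]\times\partial T)$ produces exactly $T=P+\partial R+S$, with $R$ of dimension $p+1$ and $S$ of dimension $p$, as required.

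The \emph{main obstacle} is that the central projections $\psi_{a_Q}$ have Jacobians that blow up near the centers, so the centers must be chosen carefully to keep $P$, $R$ and $S$ of finite mass. This is Federer's averaging argument: integrating the mass of $\psi_{a*}(\cdot)$ as $a$ ranges over a fixed subcube bounds it by a dimensional constant times the mass of the current on $Q$, and likewise for boundaries. A mean-value/pigeonhole selection then produces, at each stage and for each cube, a center $a_Q$ for which both $\|\psi_{a_Q*}(\cdot)\|$ and $\|\partial\psi_{a_Q*}(\cdot)\|$ remain controlled; Lemma \ref{Lipschitz-invariance-1} guarantees that normality is preserved at every step once the Lipschitz constant near the chosen center is finite. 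Propagating these mass bounds through all $N-p$ stages is the only genuinely delicate point.

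Finally, the support estimates follow by tracking displacements. Each $\psi_{a_Q}$ moves a point only within the cube $Q$ containing it, so $f$ — together with the homotopy track $R$ built from $T$ — displaces points only within the union of grid cubes meeting $\supp\,T$; Federer's estimate bounds the total displacement by $2N$ at unit scale, hence by $2N\epsilon$ after rescaling, giving $\supp\,P\cup\supp\,R$ within distance $2N\epsilon$ of $\supp\,T$. Since $S$ arises from $\partial T$ by the same homotopy, its support lies within distance $2N\epsilon$ of $\supp\,\partial T$. Applying $(\mu_\epsilon)_*$ to the unit-scale decomposition then yields the stated rescaled theorem.
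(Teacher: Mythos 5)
The paper offers no proof of this theorem at all: it is imported, in rescaled form, straight from \cite[4.2.9]{F69} and \cite[Sec.\ 5.1]{GSM98}, so your sketch can only be measured against the classical cubical-deformation argument. Your outline does follow that argument correctly in most respects: the reduction to the unit grid by the dilation $\mu_\epsilon$, the successive radial projections from cube centers of dimensions $N$ down to $p+1$, the averaging/pigeonhole choice of centers to control the masses of the pushforwards and of their boundaries, the homotopy formula producing $R$ and $S$, and the displacement bookkeeping (each projection moves points only within their own cube, which is what yields the $2N\epsilon$ support estimates after rescaling) are all the standard ingredients.

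There is, however, one genuine gap: the sentence asserting that $P:=f_{*}T$ ``is supported on the $p$-skeleton and is therefore a polyhedral $p$-chain.'' Support in the $p$-skeleton does not make a current polyhedral --- a single $p$-cell weighted by a nonconstant density is supported there but is not a finite real combination of oriented simplices. Polyhedrality comes from the constancy theorem (\cite[4.1.31]{F69}), which applies to $P$ restricted to an open $p$-cell only when $\partial P$ puts no mass in that open cell, i.e.\ one needs $\partial P$ supported in the $(p-1)$-skeleton. With your definition, $\partial P=f_{*}\partial T$ is a $(p-1)$-current supported merely in the $p$-skeleton, so constancy fails exactly on the cells met by $f_{*}\partial T$. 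The standard repair is one more homotopy track: let $g$ be the (averaged-center) radial-projection retraction of the $p$-skeleton onto the $(p-1)$-skeleton with homotopy $k$, and set $P:=f_{*}T+k_{*}\bigl([0,1]\times f_{*}\partial T\bigr)$; then $\partial P=g_{*}f_{*}\partial T$ lies in the $(p-1)$-skeleton, the constancy theorem gives polyhedrality cell by cell, and the discarded piece $-k_{*}\bigl([0,1]\times f_{*}\partial T\bigr)$ is $p$-dimensional and supported in cubes meeting $\supp\partial T$, so it is absorbed into $S$ without harming the stated support estimates. Two smaller points: the centers must be chosen good \emph{simultaneously} for $T$ and $\partial T$ at each stage (your averaging remark essentially covers this), and the appeal to Lemma \ref{Lipschitz-invariance-1} is not literally available, since the radial projections are not Lipschitz near their centers --- finiteness of mass there rests on Federer's averaged integral estimate itself, not on the Lipschitz pushforward lemma.
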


\section{ $sl_2$ module structure on distributional De Rham complex}
\label{sl2-on-distribution-de-rham}

\medskip \noindent
\vskip 0.5cm

In this section, we discuss a canonical $sl_2$-module structure on the space of compactly supported currents on a symplectic manifold.

\begin{definition}\label{sl2-module}
Let $(M,\omega)$ be a $2n$ dimensional symplectic manifold,  $\pi=\omega^{-1}$ the canonical poisson bi-vector, and $\Pi^i: \mathcal{E}\rightarrow \mathcal{E}^i$  the projection operator. Define the Lefschetz map $L$, the dual Lefschetz map $\Lambda$, and the
degree counting map $H$ on $\mathcal{E}'$ as follows.
\begin{equation*} \begin{split}  & (L T)(\alpha)=T(\omega\wedge \alpha),\,\, \Lambda T(\alpha)=T(\iota_{\pi}\alpha),\,\,(H T)(\alpha)=T(-\sum_i(n-i)\Pi^i\alpha),
\,\\&\forall\,T\in \mathcal{E}',\,\,\forall\,\alpha \in\mathcal{E}.\end{split} \end{equation*}
In this context,  a compactly supported current $T$ of degree $i$ is said to be primitive if $L^{n-i+1}T=0$, where $0\leq i\leq n$.
\end{definition}

\begin{lemma}\label{commutator-on-currents-1} Consider the maps given in Definition \ref{sl2-module}.
We have the following commutator relations.

\[ [\Lambda ,L]=H, \,\,[H,\Lambda]=2\Lambda,\,\,\,[H,L]=-2L. \]
Therefore they define a natural $sl_2$ module structure on $\mathcal{E}'$.
\end{lemma}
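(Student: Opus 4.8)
The plan is to deduce all three identities from the corresponding relations on forms, Equation (\ref{sl2-module-on-forms}), by observing that the operators of Definition \ref{sl2-module} are, up to one explicit sign, the transposes of the operators $L$, $\Lambda$, $H$ on $\mathcal{E}$ defined in (\ref{three-canonical-maps}). Write $L_0$, $\Lambda_0$, $H_0$ for the latter, with $H_0 = \sum_i (n-i)\Pi^i$. Unwinding Definition \ref{sl2-module}, one has, for every $T \in \mathcal{E}'$ and $\alpha \in \mathcal{E}$,
\[ (LT)(\alpha) = T(L_0\alpha), \qquad (\Lambda T)(\alpha) = T(\Lambda_0\alpha), \qquad (HT)(\alpha) = -\,T(H_0\alpha); \]
thus $L$ and $\Lambda$ are the plain transposes of $L_0$ and $\Lambda_0$, whereas $H$ is \emph{minus} the transpose of $H_0$. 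Because $L_0$, $\Lambda_0$, $H_0$ are continuous endomorphisms of $\mathcal{E}$, each of $L$, $\Lambda$, $H$ is a well-defined continuous endomorphism of $\mathcal{E}'$, so it remains only to check the three brackets.

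Two operators on $\mathcal{E}'$ coincide as soon as they agree after evaluation against an arbitrary test form $\alpha \in \mathcal{E}$, and evaluating a composite against $\alpha$ simply pushes the operators onto $\alpha$ while \emph{reversing} their order, since transposition is an anti-homomorphism. For the first relation this gives
\[ ([\Lambda, L]T)(\alpha) = T(L_0\Lambda_0\alpha) - T(\Lambda_0 L_0\alpha) = -\,T([\Lambda_0, L_0]\alpha) = -\,T(H_0\alpha) = (HT)(\alpha), \]
using $[\Lambda_0, L_0] = H_0$ from (\ref{sl2-module-on-forms}) and then the definition of $H$ on currents; hence $[\Lambda, L] = H$. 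The remaining two brackets are handled identically, the only new feature being that the extra sign carried by $H$ now enters; for instance
\[ ([H, \Lambda]T)(\alpha) = -\,T(\Lambda_0 H_0\alpha) + T(H_0\Lambda_0\alpha) = T([H_0, \Lambda_0]\alpha) = 2\,T(\Lambda_0\alpha) = 2(\Lambda T)(\alpha), \]
and the same computation with $L_0$ in place of $\Lambda_0$ yields $([H, L]T)(\alpha) = T([H_0, L_0]\alpha) = -2(LT)(\alpha)$.

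I do not expect any genuine obstacle here; the content is entirely formal once the transpose description is in place. The one point that must be handled with care is the sign bookkeeping: the transpose reverses the order of composition (which by itself would turn $[\Lambda_0, L_0] = H_0$ into its negative), and the definition of $H$ on currents carries a compensating minus sign. In the bracket $[\Lambda, L]$ only the order reversal occurs and is cancelled by the sign in the definition of $H$, whereas in $[H, \Lambda]$ and $[H, L]$ the two sign effects occur simultaneously and cancel against each other, so that in all three cases the relation of (\ref{sl2-module-on-forms}) is reproduced verbatim on $\mathcal{E}'$, and the $sl_2$ module structure follows.
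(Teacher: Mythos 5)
Your proof is correct, and it is exactly the computation the paper leaves implicit: the paper's proof simply declares the relations an immediate consequence of Definition \ref{sl2-module} and Equation (\ref{sl2-module-on-forms}), which amounts to the transpose argument you spell out, with the order reversal under transposition and the minus sign built into $H$ cancelling precisely as you describe. Your sign bookkeeping checks out in all three brackets, so this is the same approach, just written out in full.
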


\begin{proof} It is an immediate consequence of Definition \ref{sl2-module} and the usual commutator relations
on forms given in Equation \ref{sl2-module-on-forms}.
\end{proof}

Observe that although $\mathcal{E'}$ is an infinite dimensional $sl_2$-module, it has the property that $H$ has only finitely many eigenvalues.
The following result is a direct consequence of \cite[Corollary, 2.5, 2.6]{Yan96}.
\begin{proposition} \label{lefschetz-decomposition-for-compact-currents} Let $(M,\omega)$ be a $2n$ dimensional symplectic manifold. We have the following results.
\begin{itemize}
\item [1)] \[ L^k: \mathcal{E'}^{n-k}\rightarrow \mathcal{E'}^{n+k},\,\,T \mapsto \omega^{n-k}\wedge T\]
is an isomorphism for any $0\leq k\leq n$.
\item [2)]  $\forall\, 0\leq k \leq n$, $T \in \mathcal{E'}^k$ is primitive if and only if $\Lambda T=0$.

\item [3)]   Any  $T\in \mathcal{E'}^k$ admits a unique Lefschetz decomposition as follows.
\begin{equation} \label{lefeshetz-decomposition-for-currents} T=\displaystyle\sum_{r\geq \text{max}(\,\frac{k-n}{2}, 0)} \dfrac{L^r}{r!} T_{k-2r},\end{equation}
where $T_{k-2r}$ is a primitive compactly supported current in $\mathcal{E'}^{k-2r}$.

\end{itemize}

\end{proposition}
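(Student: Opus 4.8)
The plan is to treat all three assertions as consequences of the abstract representation theory of $sl_2$-modules on which the degree operator acts semisimply with only finitely many eigenvalues, in exact parallel with the way the corresponding statements for forms (Lemma \ref{yan's-result}) are deduced from \cite{Yan96}. By Lemma \ref{commutator-on-currents-1} the operators $L$, $\Lambda$, $H$ already endow $\mathcal{E}'$ with an $sl_2$-module structure, so the only thing that must be set up is that $\mathcal{E}'$ falls into the class of modules analyzed in \cite{Ma95} and \cite{Yan96}; the three conclusions are then the current analogues of parts 1), 2), 3) of Lemma \ref{yan's-result} and follow verbatim from \cite[Cor. 2.5, 2.6]{Yan96}.

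The key preliminary step is to identify the $H$-eigenspace decomposition. First I would observe that every $T\in\mathcal{E}'$ splits as a finite sum of homogeneous components: setting $T_k(\alpha):=T(\Pi^{2n-k}\alpha)$ for $0\le k\le 2n$ gives $T_k\in\mathcal{E'}^k$ (continuous because each projection $\Pi^j:\mathcal{E}\to\mathcal{E}^j$ is continuous), and $T=\sum_{k=0}^{2n}T_k$ since a form on a $2n$-manifold has no components of degree exceeding $2n$. Thus $\mathcal{E}'=\bigoplus_{k=0}^{2n}\mathcal{E'}^k$. A direct pairing computation from Definition \ref{sl2-module} shows that for $T\in\mathcal{E'}^k$ one has $(HT)(\alpha)=(n-k)\,T(\alpha)$, so $H$ acts on $\mathcal{E'}^k$ as the scalar $n-k$. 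Consequently $H$ is semisimple with the finite set of integer eigenvalues $\{n-k:0\le k\le 2n\}=\{-n,\ldots,n\}$, which is precisely the hypothesis needed to run Yan's structure theory.

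With this in hand the three assertions are formal. For 1), the map $L^k$ sends the weight-$k$ eigenspace $\mathcal{E'}^{n-k}$ to the weight-$(-k)$ eigenspace $\mathcal{E'}^{n+k}$, and the isomorphism is the standard fact that $L^k$ is a bijection between opposite weight spaces of a semisimple $sl_2$-module. For 2), the commutators of Lemma \ref{commutator-on-currents-1} make $\Lambda$ the raising operator, so for $T\in\mathcal{E'}^k$ with $k\le n$ the weight is $\lambda=n-k\ge 0$, and primitivity, $L^{n-k+1}T=L^{\lambda+1}T=0$, is equivalent to $T$ being a highest-weight vector, i.e. $\Lambda T=0$. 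For 3), the unique expansion $T=\sum_r\tfrac{L^r}{r!}T_{k-2r}$ with $T_{k-2r}$ primitive is the Lefschetz decomposition of $T$ into images of highest-weight vectors, again a standard consequence of semisimplicity. The only substantive point in the whole argument is the semisimplicity-with-finite-spectrum verification of the previous paragraph; once that is established no analysis of currents beyond the continuity of the projections $\Pi^j$ enters, and in particular the infinite-dimensionality of $\mathcal{E}'$ causes no difficulty, since every weight-space manipulation involves only the finitely many homogeneous components of a single current.
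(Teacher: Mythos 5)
Your proposal is correct and follows essentially the same route as the paper, which simply observes that $\mathcal{E}'$ is an $sl_2$-module on which $H$ has only finitely many eigenvalues and then cites \cite[Cor.~2.5, 2.6]{Yan96}. The only difference is that you spell out the verification the paper leaves implicit---the splitting $\mathcal{E}'=\bigoplus_{k=0}^{2n}\mathcal{E}'^{k}$ via the continuous projections and the computation that $H$ acts on $\mathcal{E}'^{k}$ by the scalar $n-k$---which is a welcome addition rather than a deviation.
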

\begin{definition}\label{sym-hodge-star}
For any $0\leq i \leq 2n$, define the symplectic Hodge star operator on compactly supported currents by
\[(\star T)( \alpha)=T(\star \alpha),\,\,T\in\mathcal{E'}^{i}, \alpha\in \mathcal{E}^{i}.\]

And define the symplectic Hodge adjoint operator $d^{\Lambda}$ by
\[ (d^{\Lambda} T)=(d\Lambda -\Lambda d)T.\]
A compactly supported current $T$ is called (symplectic) Harmonic if and only if $dT=d^{\Lambda}T=0$.
\end{definition}






Now, using the commutator relations on forms given
 in the equation (\ref{commutator-on-forms}), it is straightforward to check that we have the following commutator relations
on currents.

\begin{lemma} \label{commutator-on-currents-2}
\[[d,L]=0,\,\,\,[d^{\Lambda},\Lambda]=0,\,\,\,[d,\Lambda]=d^{\Lambda},\,\,\,[d^{\Lambda},L]=d,\,\,\,[d,d^{\Lambda}]=0.\]
\end{lemma}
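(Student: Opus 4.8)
The plan is to derive every identity in the lemma from the corresponding relation on forms in (\ref{commutator-on-forms}) by duality, exploiting the fact that each of the operators $L$, $\Lambda$, $d$, $d^{\Lambda}$ on $\mathcal{E}'$ is, up to an explicit degree-dependent sign, the transpose of the same-named operator on $\mathcal{E}$. From Definition \ref{sl2-module} one has $(LT)(\alpha)=T(\omega\wedge\alpha)$ and $(\Lambda T)(\alpha)=T(\iota_{\pi}\alpha)$ with no extra sign, while (\ref{boundary-operator}) and (\ref{differential-of-current}) give, for a current $T$ of degree $i$, the signed transpose formula $(dT)(\alpha)=(-1)^{i+1}T(d\alpha)$. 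Since two currents coincide as soon as they agree on every test form, it suffices to evaluate each commutator on an arbitrary $\alpha\in\mathcal{E}$ and match it, sign for sign, against (\ref{commutator-on-forms}). Note first that the identity $[d,\Lambda]=d^{\Lambda}$ needs no argument at all: it is literally the definition of $d^{\Lambda}$ on currents in Definition \ref{sym-hodge-star}.

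First I would record the model computation for $[d,L]=0$. For $T$ of degree $i$ and any $\alpha$, using $d\omega=0$ so that $d(\omega\wedge\alpha)=\omega\wedge d\alpha$ (which is $[d,L]=0$ on forms), one finds that both $(dLT)(\alpha)$ and $(LdT)(\alpha)$ equal $(-1)^{i+1}T(\omega\wedge d\alpha)$, whence $[d,L]T=0$. The decisive point is that the two summands carry the \emph{same} sign $(-1)^{i+1}$: the differential contributes a factor depending only on the parity of the degree of the current it acts on, and $L$ shifts degree by the even number $2$, hence preserves that parity. The same parity bookkeeping, applied to the definition $d^{\Lambda}=d\Lambda-\Lambda d$, yields after a short calculation the signed transpose formula $(d^{\Lambda}T)(\alpha)=(-1)^{i}T(d^{\Lambda}\alpha)$ for $T$ of degree $i$, which identifies the current operator $d^{\Lambda}$ with the transpose of the form-level $d^{\Lambda}$.

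With these transpose formulas in hand the remaining identities transfer mechanically. Evaluating $[d^{\Lambda},\Lambda]$ on $\alpha$ collapses, after the signs cancel, to $(-1)^{i}T\big((\Lambda d^{\Lambda}-d^{\Lambda}\Lambda)\alpha\big)$, which vanishes by the form relation $[d^{\Lambda},\Lambda]=0$; likewise $[d^{\Lambda},L]$ collapses to $(-1)^{i+1}T(d\alpha)=(dT)(\alpha)$, giving $[d^{\Lambda},L]=d$. The one relation that is not a transfer is $[d,d^{\Lambda}]=0$: since $d$ and $d^{\Lambda}$ each shift degree by one, the bracket here must be read as the graded anticommutator, and $dd^{\Lambda}+d^{\Lambda}d=d(d\Lambda-\Lambda d)+(d\Lambda-\Lambda d)d=0$ follows at once from $d^{2}=0$ on currents. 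As an alternative route for $[d^{\Lambda},L]=d$, once $[d,L]=0$ and the $sl_2$-relations of Lemma \ref{commutator-on-currents-1} are in place, it drops out of the Jacobi identity $[[d,\Lambda],L]=[d,[\Lambda,L]]-[\Lambda,[d,L]]=[d,H]$ together with the eigenvalue computation $[d,H]=d$, valid because $H$ acts on a degree-$i$ current as multiplication by $n-i$.

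The hard part will be sign discipline rather than any conceptual difficulty. Because the current differential in (\ref{differential-of-current}) carries the degree-dependent factor $(-1)^{m-q+1}$, one must verify in each commutator that the two summands acquire matching signs before the relevant identity in (\ref{commutator-on-forms}) can be factored out; this matching is exactly what the evenness of the degree shifts of $L$ and $\Lambda$, together with the complementary sign $(-1)^{i}$ attached to $d^{\Lambda}$, guarantee. It is also worth recording explicitly that the naive commutator $dd^{\Lambda}-d^{\Lambda}d$ equals $-2\,d\Lambda d$ and is \emph{not} identically zero, so that the relation $[d,d^{\Lambda}]=0$ genuinely refers to the anticommutator.
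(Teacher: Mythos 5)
Your proposal is correct and takes essentially the same route as the paper, whose entire proof is the remark that the form-level relations in Equation (\ref{commutator-on-forms}) transfer to currents by duality. Your explicit transpose formulas $(dT)(\alpha)=(-1)^{i+1}T(d\alpha)$ and $(d^{\Lambda}T)(\alpha)=(-1)^{i}T(d^{\Lambda}\alpha)$ for a degree-$i$ current, the parity observation for the even-degree shifts of $L$ and $\Lambda$, and the reading of $[d,d^{\Lambda}]$ as the graded anticommutator simply carry out, sign for sign, the check the paper declares ``straightforward.''
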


The following result is an easy consequence of \cite[Thm. 3.12]{We80} and the commutator relations given in Lemma \ref{commutator-on-currents-2}.

\begin{lemma}\label{non-commutative-polynomial} Consider the Lefschetz decomposition of a compactly supported current $T$ of degree $k$
as given in the equation (\ref{lefeshetz-decomposition-for-currents}). Then
\begin{itemize}\item [1)] there  are non-commutative polynomials $\Phi_{k,r}(L,\Lambda)$ such that
\[ T_{k-2r}=\Phi_{k,r}(L,\Lambda)T;\]

\item [2)] each $T_{k-2r}$ is $d$-closed and primitive if $T$ is Harmonic.
\end{itemize}
\end{lemma}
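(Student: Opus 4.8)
The plan is to separate the two assertions: part (1) is a purely algebraic statement about the $sl_2$-module $\mathcal{E}'$, while part (2) reduces to a formal commutator computation built on Lemma \ref{commutator-on-currents-2}.

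For part (1), I would first record that by Lemma \ref{commutator-on-currents-1} the operators $(L,\Lambda,H)$ make $\mathcal{E}'$ an honest $sl_2$-module, and that $H$ acts on the degree-$d$ component $\mathcal{E}'^{d}$ by the scalar $n-d$ --- precisely the eigenvalue with which the corresponding operator acts on degree-$d$ differential forms. Hence $\mathcal{E}'$ is an $sl_2$-module of exactly the same type as $\Omega(M)$: the spectrum of $H$ is finite, and Proposition \ref{lefschetz-decomposition-for-compact-currents} furnishes the same unique Lefschetz splitting. The construction of the extracting polynomials $\Phi_{k,r}(L,\Lambda)$ in \cite[Thm. 3.12]{We80} uses nothing beyond this weight structure; the resulting polynomials depend only on $k$, $r$ and $n$, so the very same $\Phi_{k,r}$ yield $T_{k-2r}=\Phi_{k,r}(L,\Lambda)T$ on currents.

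For part (2), the primitivity of each $T_{k-2r}$ is already built into the decomposition (\ref{lefeshetz-decomposition-for-currents}) and holds with no hypothesis on $T$, so the real content is $d$-closedness. Assuming $T$ is Harmonic, so that $dT=d^{\Lambda}T=0$, I would express $T_{k-2r}=\Phi_{k,r}(L,\Lambda)T$ as a sum of monomials in $L$ and $\Lambda$ and push $d$ to the right through each monomial by means of the relations $dL=Ld$ and $d\Lambda=\Lambda d+d^{\Lambda}$ from Lemma \ref{commutator-on-currents-2}, simultaneously pushing $d^{\Lambda}$ rightward via $d^{\Lambda}\Lambda=\Lambda d^{\Lambda}$ and $d^{\Lambda}L=Ld^{\Lambda}+d$. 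A short induction on the length of the monomial then shows that $dM$ and $d^{\Lambda}M$, for any monomial $M$ in $L$ and $\Lambda$, are finite sums of terms of the form $M'd$ and $M'd^{\Lambda}$ with $M'$ again a monomial: commuting $d$ past $L$ (resp.\ $d^{\Lambda}$ past $\Lambda$) introduces no defect, while commuting $d$ past $\Lambda$ (resp.\ $d^{\Lambda}$ past $L$) either preserves the trailing operator or switches $d\leftrightarrow d^{\Lambda}$ on a strictly shorter monomial. Evaluating on $T$ and invoking $dT=d^{\Lambda}T=0$ annihilates every term, whence $dT_{k-2r}=0$.

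The computation is entirely formal, so I anticipate no essential obstacle; the one point deserving care is the assertion in part (1) that the polynomials from the form setting apply unchanged. This is exactly why I would make explicit the matching of the $H$-eigenvalue $n-d$ on degree-$d$ currents with that on degree-$d$ forms, together with the finiteness of the spectrum of $H$, for these are precisely the features that let the representation-theoretic construction of \cite[Thm. 3.12]{We80} transfer verbatim from $\Omega(M)$ to $\mathcal{E}'$.
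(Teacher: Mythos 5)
Your proposal is correct and takes essentially the same route as the paper, which derives part (1) from \cite[Thm.~3.12]{We80} applied to the $sl_2$-module structure of Lemma \ref{commutator-on-currents-1} and part (2) from the commutator relations of Lemma \ref{commutator-on-currents-2}, leaving the details as an ``easy consequence.'' Your explicit checks---that $H$ acts by the eigenvalue $n-d$ on degree-$d$ currents so that the extracting polynomials transfer verbatim from forms, and the induction pushing $d$ and $d^{\Lambda}$ rightward through monomials in $L$ and $\Lambda$---are exactly the omitted verifications, carried out correctly.
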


\section{A fundamental property of symplectic Harmonic forms}\label{fundamental-result}

We are now in a position to establish the main results of this paper.

\begin{theorem}\label{main-result-0} Let $(M,\omega)$ be a $2n$ dimensional compact symplectic manifold with a given polyhedral subdivision, and let $[\alpha]\in H^p(M,\R)$ be a primitive De Rham cohomology class of degree $p>0$. Suppose that in the space of compactly supported currents, $[\alpha]$ is cohomologous to a $2n-p$ dimensional polyhedral chain $Q$ of the subdivision. Then for any open neighborhood $W$ of $\text{supp} \,Q$, there exists a $2n-p$ dimensional compactly supported primitive current $T$ which is cohomologous to $[\alpha]$ in the space of compactly
supported currents, and which is supported inside the union of $W$ and the support of a $p-1$ dimensional Lipschitz chain.
In particular, $T$ can be chosen not to be supported on the entire manifold.
\end{theorem}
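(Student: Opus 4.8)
The plan is to produce, inside the class $[\alpha]$ and within the space of compactly supported currents, a \emph{closed primitive} current $T$ whose support is controlled; once $T$ is primitive and closed it is automatically symplectic Harmonic, since $d^{\Lambda}T=(d\Lambda-\Lambda d)T=0$ when $\Lambda T=0$ and $dT=0$. The starting observation is that the three $sl_2$-operators of Definition \ref{sl2-module} are support non-increasing: $LT=\omega\wedge T$ and $\Lambda T=\iota_{\pi}T$ are contraction/wedging against globally defined smooth tensors, so $\supp(LT)\cup\supp(\Lambda T)\subseteq\supp T$, and hence $\supp\big(\Phi(L,\Lambda)T\big)\subseteq\supp T$ for every non-commutative polynomial $\Phi$. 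Since $[\alpha]$ is cohomologous to $Q$ and $\alpha$ is closed, $Q$ is a cycle of dimension $2n-p$ with $\supp Q\subseteq W$, and I may freely apply the Lefschetz decomposition of Proposition \ref{lefschetz-decomposition-for-compact-currents} and the projection formulas of Lemma \ref{non-commutative-polynomial} to it.

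First I would extract the primitive component $Q_{(0)}=\Phi_{p,0}(L,\Lambda)Q$. By the previous paragraph $Q_{(0)}$ is primitive and $\supp Q_{(0)}\subseteq\supp Q\subseteq W$. The difficulty is that $Q_{(0)}$ need not be closed: by Lemma \ref{non-commutative-polynomial} the components of the Lefschetz decomposition are closed only when the current is Harmonic, and $Q$ is merely a cycle. I would next run Yan's purely algebraic primitivization argument, which transfers to currents because $\mathcal{E}'$ is an $sl_2$-module of the same type with the same differential commutators (Lemma \ref{commutator-on-currents-2}): it shows that the primitive class $[\alpha]$ admits a closed primitive representative obtained from $Q$ by the support-preserving operators above \emph{together with} a coboundary correction $d\Gamma$ enforcing closedness. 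The essential geometric content of that correction is encoded in the primitivity relation $L^{n-p+1}Q=d\Gamma$, where $\Gamma$ is a compactly supported current of dimension $p-1$; this is where the coboundary---and with it the only possible growth of support---enters.

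The support of the correction is then controlled by the Federer--Fleming deformation theorem. Working in the charts supplied by Lemma \ref{partition} and transporting currents by the Lipschitz-invariance Proposition \ref{Lipschitz-invariance-2}, I would apply the rescaled deformation theorem (Theorem \ref{F-F-deformation}) at a small scale $\epsilon$ to $\Gamma$, writing $\Gamma=P+\partial R+S$ with $P$ a polyhedral $(p-1)$-chain whose support lies in a prescribed neighborhood of $\supp\Gamma$. Replacing $\Gamma$ by $P$ modulo the boundary terms changes the coboundary correction only by a coboundary, so the resulting current $T$ is still a closed primitive current cohomologous to $\alpha$, now supported inside $W\cup\Sigma$, where $\Sigma=\supp P$ is the support of a Lipschitz $(p-1)$-chain (polyhedral chains are Lipschitz chains, pushed to $M$ by a locally Lipschitz map).

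Finally, to conclude that $T$ need not be supported everywhere, note that $\supp Q$ is a $(2n-p)$-dimensional polyhedron and, by Proposition \ref{Lipschitz-chain}, $\Sigma$ has Hausdorff dimension at most $p-1$; since $p>0$ both sets have measure zero, so choosing $W$ to be a proper open neighborhood of the closed nowhere-dense set $\supp Q$ gives $W\cup\Sigma\neq M$. I expect the main obstacle to be exactly the bookkeeping of the third step: the deformation theorem is an ambient-Euclidean statement producing error terms $\partial R$ and $S$, and one must verify that after charting, transporting, and reassembling, these error terms disturb neither the primitivity of $T$ nor its cohomology class, while simultaneously keeping the new support inside a neighborhood of $\supp Q$ together with a genuinely $(p-1)$-dimensional Lipschitz chain. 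The algebraic primitivization is classical; the real work is marrying it to the deformation theorem without losing control of the support.
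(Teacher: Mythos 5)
Your outline reproduces the paper's skeleton---the primitivity relation $L^{n-p+1}Q=d\Gamma$, a Federer--Fleming deformation of $\Gamma$, and the support-non-increasing $sl_2$-operators $\Phi(L,\Lambda)$ to control the Lefschetz inversion---but it has two concrete gaps, both located exactly at the step you yourself defer as ``the real work.'' First, Theorem \ref{F-F-deformation} applies only to \emph{normal} currents, and nothing in your argument makes $\Gamma$ normal: exactness of $\omega^{n-p+1}\wedge\alpha$ only lets you solve $\omega^{n-p+1}\wedge Q=d\Gamma$ with $\Gamma$ an arbitrary compactly supported current in $\mathcal{E}'(M)$, to which the deformation theorem simply does not apply (you cite Lemma \ref{partition} and Proposition \ref{Lipschitz-invariance-2}, which presuppose normality, without ever securing it). This is precisely what the paper's normal-current cohomology is built for: $\omega^{n-p+1}\wedge Q$ is itself normal by Lemma \ref{product-normal-chain} and Proposition \ref{polyhedral-chain-mfld}, and the isomorphism $H^{*}_{\text{nor}}(M)\cong H_c^{-\infty,*}(M)$ of Theorem \ref{De-Rahm-2} upgrades ``exact in $\mathcal{E}'(M)$'' to ``boundary of a normal current,'' so that $\Gamma$ may be chosen in $N(M)$. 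Without invoking Theorem \ref{De-Rahm-2} or a substitute, your third step cannot start.

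Second, your phrase ``replacing $\Gamma$ by $P$ modulo the boundary terms'' conflates $\partial R$, which does die under $d$, with $S$, which does not: from $\Gamma=P+\partial R+S$ one only gets $\omega^{n-p+1}\wedge Q=d\bigl(\pi_{*}(P)+\pi_{*}(S)\bigr)$, and $S$ must be carried into the Lefschetz inversion together with $P$. That this does not ruin the support control is exactly the second estimate in Theorem \ref{F-F-deformation}, which you never record: $\supp S$ lies within $2N\epsilon$ of $\supp\partial\Gamma=\supp\bigl(\omega^{n-p+1}\wedge Q\bigr)\subseteq\supp Q$, so for $\epsilon$ small relative to the Lipschitz constant of the retraction one gets $\supp\pi_{*}(S)\subseteq W$; the current $B$ with $\omega^{n-p+1}\wedge B=\pi_{*}(P)+\pi_{*}(S)$ then satisfies $\supp B\subseteq\supp\pi_{*}(P)\cup W$ by your own $\Phi(L,\Lambda)$ observation, and $T=Q-dB$ is the desired primitive cycle. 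Worse, your chart-by-chart plan via Lemma \ref{partition} would destroy this control: cutting $\Gamma$ with a partition of unity $\{\rho_i\}$ replaces $\partial\Gamma$ by $\rho_i\,\partial\Gamma\pm d\rho_i\wedge\Gamma$, whose support spreads over all of $\supp\Gamma$ (which is uncontrolled), so the local $S$-terms are no longer confined near $\supp Q$. The paper avoids this by embedding $M$ globally into $\R^N$, applying the deformation theorem \emph{once} to $\mathfrak{j}_{*}(\Gamma)$, and retracting by the Lipschitz map $\pi$ from a tubular neighborhood; that global mechanism, not a partition of unity, is what you need. (Your final nowhere-density argument, and the observation that every degree-one current is automatically primitive so that $p=1$ is trivial, are fine.)
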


\begin{proof}
We first note that if $p=1$, then by dimension consideration, the set of interior points of $Q$ is a coisotropic submanifold.
So in this case, we can simply choose $T=Q$.

Therefore without the loss of generality we may assume that $2\leq p\leq n$. We divide the rest of the proof into two steps. We first lay out our approach in the framework of the cohomology of normal currents. We then apply
the Federer-Fleming deformation theorem to complete the proof.

{\bf Step 1: Algebraic setup.}

By assumption,
$\omega^{n-p+1}\wedge \alpha$ is cohomologous to zero. It follows that $\omega^{n-p+1}\wedge Q$, as a compactly supported current, is also cohomologous to zero.

By Lemma \ref{product-normal-chain} and Proposition \ref{polyhedral-chain-mfld}, $\omega^{n-p+1}\wedge Q$ is a normal current. It then follows from Theorem \ref{De-Rahm-2} that
there exists a $p-1$ dimensional normal current $\Gamma$ such that
\begin{equation}\label{being-a-coboundary} \omega^{n-p+1}\wedge Q= d\Gamma.\end{equation}

Since $d\Gamma=\pm \partial \Gamma$, we clearly have that \begin{equation}\label{support-0} \text{supp}\, \partial \Gamma= \text{supp}\,\left( \omega^{n-p+1}\wedge Q\right) \subset \text{supp}\, Q.\end{equation}
In order to apply the deformation theorem, we have the differential manifold $M$ smoothly embedded into some Euclidean space $\R^N$. Let
$\mathfrak{j}: M\rightarrow \R^N$ be the embedding, let $U$ and $V$ be two tubular neighborhoods of $\mathfrak{j}(M)$ in $\R^N$ such that $\overline{U}\subset V$ is a compact set, and let $\pi: V\rightarrow M$ be the retraction map which retracts $V$ onto $M$.

{\bf Step 2: Applying the deformation theorem.}

By Proposition \ref{Lipschitz-invariance-2}, $\mathfrak{j}_{*}(\Gamma)$ is a normal current in $U$. Applying Theorem \ref{F-F-deformation}, for any
 given constant $\epsilon>0$, we decompose
$\mathfrak{j}_{*}(\Gamma)$ as
\begin{equation}\label{decomposte-normal-current} \mathfrak{j}_{*}(\Gamma)=P+\partial R+ S,\end{equation}  where
 $P$ is a polyhedral chain of dimension $p-1$ in $\R^N$,  and $R$ and $S$ are normal currents in $\R^N$ of dimension
  $p$ and $p-1$ respectively; moreover,
 we have that
\begin{equation}\label{support-1}\begin{split}&\text{supp}\, P \cup \text{supp} R \subset \{ x, \,\text{disc}(x,\text{supp}\,\mathfrak{j}_*(\Gamma))\leq 2N\epsilon\},\\
&\text{supp}\, S\subset \{ x, \,\text{disc}(x,\text{supp}\,\partial \mathfrak{j}_*(\Gamma))\leq 2N\epsilon\}.\end{split}\end{equation}
Here for any subset $A\subset \R^N$, we denote by $\text{disc}(x,A)$ the distance function induced by the standard Euclidean norm on $\R^N$.

  Let $L$ be the Lipschitz constant of the composition map \[ \overline{U}\xrightarrow{\pi\vert_{\overline{U}}} \mathfrak{j}(M)\xhookrightarrow{\text{inclusion}} \R^N.\]
  Observe that $\text{supp}\, \partial\mathfrak{j}_*(\Gamma)\subset \mathfrak{j}_*(M)$. Thus the restriction of $\pi$ to $\text{supp}\,\partial\mathfrak{j}_*(\Gamma)$ is just the identity map. As a result, for any $x\in \text{supp}\,S$, we have that
  \begin{equation}\label{image-under-lipschitz-map}\text{disc}(\pi(x),\text{supp}\partial \mathfrak{j}_*( \Gamma)) \leq L \cdot \text{disc}(x,\text{supp}\partial \mathfrak{j}_*( \Gamma))
  \end{equation}

  Clearly, in view of (\ref{support-1}) and (\ref{image-under-lipschitz-map}), we may assume that $\epsilon$ is sufficiently small such that $P, R$ and $S$ are all supported inside $U$,  and such that
   \begin{equation}\label{support-2}  \text{supp} \, \pi_*(S)\subset W.\end{equation}

 Note that $\pi\circ \mathfrak{j}=\text{id}$. Therefore $\Gamma= \pi_{*}\circ\mathfrak{j}_{*}(\Gamma)=\pi_{*}(P)+\pi_{*}\partial (R)+
\pi_{*}(S)=\pi_{*}(P)+\partial\pi_{*} (R)+\pi_{*}(S)$. Since $d=\pm \partial$, it follows from (\ref{being-a-coboundary}) that
\begin{equation}\label{deformed} \omega^{n-p+1}\wedge Q=d\left(\pi_{*}(P)+\pi_{*}(S)\right).\end{equation}

Observe that $\pi_*(P)$ is a $p-1$ dimensional Lipschitz chain. By Proposition \ref{lefschetz-decomposition-for-compact-currents} there exists a $2n-p+1$ dimensional compact supported currents
$B$ such that
\begin{equation}\label{isomorphism-on-currents} \omega^{n-p+1}\wedge B=\pi_{*}(P)+\pi_{*}(S).\end{equation}

 Combining (\ref{deformed}) and (\ref{isomorphism-on-currents}), we have that
\[ \omega^{n-p+1}\wedge Q=\omega^{n-p+1}\wedge dB.\]

This shows that $\omega^{n-p+1}\wedge(Q-dB)=0$, i.e., $T:=Q-dB$ is a closed primitive current. Clearly, $Q$ is supported inside $W$. Since
$\text{supp}\,(d B)\subset \text{supp}\, B$, to finish the proof of Theorem \ref{main-result-0}, it suffices to show that
\begin{equation}\label{inclusion-relation} \text{supp}\, B\subset \text{supp}\left(\pi_{*}(P)+\pi_{*}(S)\right) \subset \text{supp}\pi_{*}(P)\cup W.\end{equation} To see this, applying Proposition \ref{lefschetz-decomposition-for-compact-currents}, we Lefschetz decompose
$B$ as follows.
\begin{equation}\label{decompose-B} B=\displaystyle\sum_{r\geq \text{max}(\,\frac{p-n-1}{2}, 0)} \dfrac{L^r}{r!} B_{p-2r-1},\end{equation}
where each $B_{p-2r-1}$ is a compactly supported primitive current of degree $p-2r-1$. Therefore we have that
\[ \pi_*(P)+\pi_*(S)=\displaystyle\sum_{r\geq \text{max}(\,\frac{p-n-1}{2}, 0)} \dfrac{L^{n-p+r-1}}{r!} B_{p-2r-1}.\]
Then by Lemma \ref{non-commutative-polynomial}, for each $r$, there exists a non-commutative polynomial $\Phi_r(L,\Lambda)$ such that
\begin{equation}\label{components-of-B} B_{p-2r-1}=\Phi_r(L,\Lambda)\left( \pi_*(P)+\pi_*(S)\right).\end{equation}
Thus we must have that
\[ \text{supp} B_{p-2r}\subset \text{supp}\left(\pi_{*}(P)+\pi_{*}(S)\right),\, \forall\, r.\]
This immediately implies (\ref{inclusion-relation}) and completes the proof.

\end{proof}

\begin{remark}\begin{itemize}
\item[a)] By Theorem \ref{duality}, every De Rham cohomology class is Poincar\'e dual to a polyhedral chain of the subdivision.

\item [b)] As an easy consequence of (\ref{components-of-B}), the current $B$ constructed in the proof of Theorem \ref{main-result-0} must have finite mass. However, it is not clear whether or not $B$ is a normal current in general.
We observe that $B$ is a $2n-p+1$ dimensional current supported inside $\text{supp\,}\pi_*(P+S)$. So if the Hausdorff dimension of $\text{supp\,}\pi_*(P+S)$ is strictly less than $2n-p+1$, then $B$ can never be a non-zero normal current. This is due to the basic fact that any flat currents, and thus any normal currents, can not be supported on lower dimensional sets. For more details, we refer to \cite[4.1.20]{F69}.
\end{itemize}
\end{remark}

We are going to prove a more precise version of Theorem \ref{main-result1}, which has Theorem \ref{main-result1} as an immediate consequence.

\begin{theorem}\label{main-result2}Let $(M,\omega)$ be a $2n$ dimensional compact  symplectic manifold with a given polyhedral subdivision. Then for any $1\leq p\leq n$, a primitive cohomology class of degree $p$ is represented by a symplectic Harmonic form $\alpha$ not supported on the entire manifold. More precisely, $\alpha$ is supported on a small open neighborhood of the union of the support of a $p-1$ dimensional Lipschitz chain, and the support of a $2n-p$ dimensional polyhedral chain associated to the given polyhedral subdivision.
\end{theorem}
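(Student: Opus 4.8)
The plan is to combine Theorem \ref{main-result-0} with the symplectic smoothing operator of Theorem \ref{symplectic-smoothing-operator}, observing that the two results fit together precisely. First I would invoke Theorem \ref{duality}: since $M$ is compact and oriented with a polyhedral subdivision, any primitive cohomology class $[\alpha] \in H^p(M,\R)$ of degree $p$ is Poincar\'e dual to a $2n-p$ dimensional polyhedral chain $Q$ of the subdivision, and in the space of compactly supported currents $[\alpha]$ is cohomologous to $Q$ (using the isomorphism $H^*_{\text{DR}}(M) \cong H^{-\infty,*}_c(M)$ of Theorem \ref{natural-maps-forms-and-currents}). This puts us exactly in the hypothesis of Theorem \ref{main-result-0}.

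Next I would apply Theorem \ref{main-result-0} to produce, for any open neighborhood $W$ of $\text{supp}\, Q$, a $2n-p$ dimensional compactly supported \emph{primitive} current $T$ cohomologous to $[\alpha]$ and supported inside the union of $W$ and the support of a $p-1$ dimensional Lipschitz chain. The crucial point is that $T$ is a closed primitive current not supported on the entire manifold: by Proposition \ref{Lipschitz-chain} the $p-1$ dimensional Lipschitz chain has support of Hausdorff dimension at most $p-1 < 2n$, hence a proper subset of $M$, and $W$ may be chosen to be a proper open neighborhood of the lower-dimensional set $\text{supp}\, Q$. Thus $\text{supp}\, T$ is a proper subset of $M$.

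Finally I would smooth $T$ using Theorem \ref{symplectic-smoothing-operator}. Since $T$ is a closed primitive current supported inside some open set $W'$ that is a proper subset of $M$, the symplectic smoothing operator $\mathcal{S}$ yields a smooth \emph{primitive closed} differential form $\alpha := \mathcal{S}(T)$ supported inside $W'$, with $\mathcal{S}(T) - T$ a coboundary, so $\alpha$ represents $[\alpha]$. A closed primitive form is automatically symplectic Harmonic (as noted after Question 1 in the introduction, this follows from $d\alpha = 0$ and $\Lambda \alpha = 0$, whence $d^\Lambda \alpha = (d\Lambda - \Lambda d)\alpha = 0$). Because $\mathcal{S}$ preserves supports, $\alpha$ is supported inside a small open neighborhood of the union of the support of the $p-1$ dimensional Lipschitz chain and the support of the $2n-p$ dimensional polyhedral chain $Q$, which is exactly the refined support statement claimed.

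The main obstacle I anticipate is verifying that the smoothing operator genuinely preserves the support control: Theorem \ref{symplectic-smoothing-operator} guarantees that $\mathcal{S}(T)$ is supported inside the same open set $W'$ as $T$, but one must ensure the open neighborhood can be taken small enough to remain a proper subset of $M$ while still containing the supports of both chains. This requires choosing $W$ in the application of Theorem \ref{main-result-0} to be a sufficiently small open neighborhood of $\text{supp}\, Q$, and then noting that $\text{supp}\, T \subset W \cup \text{supp}(\text{Lipschitz chain})$ is a proper closed subset; since $M$ is compact we may enclose this in a proper open set on which we smooth. The remaining steps are essentially bookkeeping, as each relies on a previously established result.
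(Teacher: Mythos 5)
Your proposal is correct and follows essentially the same route as the paper: the paper's own proof is a one-line citation of Theorem \ref{main-result-0} and Theorem \ref{symplectic-smoothing-operator}, with Theorem \ref{duality} supplying the polyhedral chain $Q$ (as noted in the remark following Theorem \ref{main-result-0}), and your write-up simply makes these steps explicit, including the correct observation that $d\alpha=0$ and $\Lambda\alpha=0$ force $d^{\Lambda}\alpha=(d\Lambda-\Lambda d)\alpha=0$. The support bookkeeping you flag as the main obstacle is handled exactly as you describe, so no gap remains.
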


\begin{proof}

It follows directly from Theorem \ref{main-result-0} and Theorem \ref{symplectic-smoothing-operator}.\mbox{\qedhere}
\end{proof}

\section{Harmonic representatives of Thom classes}\label{open-question}

\medskip \noindent
\vskip 0.5cm

In this section, we establish Theorem \ref{Guillemin's-question} and provide a satisfactory answer to the question asked by Victor Guillemin.

\begin{lemma}\label{cup-product} Let $(M,\omega)$ be a $2n$ dimensional compact symplectic manifold,  $N$ a compact oriented isotropic submanifold of $M$, and $[\tau_N]$ the Thom class of $N$. Then
$[\omega \wedge \tau_N]=0$.
\end{lemma}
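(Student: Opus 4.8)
The plan is to prove the vanishing of $[\omega\wedge\tau_N]\in H^{k+2}(M,\R)$, where $k=\codim N$, by pairing it against an arbitrary class of complementary degree and invoking Poincar\'e duality. The two ingredients I would use are: (i) the defining property of the Thom class of a compact oriented submanifold, namely that for the inclusion $j\colon N\hookrightarrow M$ and any closed form $\beta$ of degree $\dim N=2n-k$ one has $\int_M \tau_N\wedge\beta=\int_N j^*\beta$; and (ii) the characterization of isotropy, $j^*\omega=0$.

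First I would dispose of the degenerate case: if $k+2>2n$, equivalently $\dim N\leq 1$, then $\omega\wedge\tau_N$ has degree exceeding $2n$ and so vanishes already as a form, and there is nothing to prove. I may therefore assume $2n-k-2\geq 0$. Since $M$ is compact and oriented, Poincar\'e duality makes the pairing $H^{k+2}(M,\R)\times H^{2n-k-2}(M,\R)\to\R$, given by $([a],[b])\mapsto\int_M a\wedge b$, nondegenerate; hence it suffices to show that $\int_M \omega\wedge\tau_N\wedge\gamma=0$ for every closed form $\gamma$ of degree $2n-k-2$.

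Next, for such a $\gamma$, the form $\omega\wedge\gamma$ is closed (as $\omega$ is closed) and has degree $2n-k=\dim N$, exactly the degree required to apply property (i). Since $\omega$ has even degree it commutes past $\tau_N$ with no sign, so I would compute
\[
\int_M \omega\wedge\tau_N\wedge\gamma=\int_M \tau_N\wedge(\omega\wedge\gamma)=\int_N j^*(\omega\wedge\gamma)=\int_N (j^*\omega)\wedge(j^*\gamma).
\]
By the isotropy condition (ii), $j^*\omega=0$, so the right-hand side is zero. As $\gamma$ was an arbitrary closed form of complementary degree, nondegeneracy of the Poincar\'e pairing forces $[\omega\wedge\tau_N]=0$.

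The statement is essentially an immediate consequence of Poincar\'e duality together with the vanishing $j^*\omega=0$, so there is no serious analytic obstacle. The only points requiring mild care are the degree bookkeeping that lets property (i) be applied with the \emph{closed} test form $\beta=\omega\wedge\gamma$ (which is why one pairs against a cohomology class $[\gamma]$ rather than an arbitrary form), and the separate treatment of the low-dimensional case $\dim N\leq 1$ where the class lives above the top degree.
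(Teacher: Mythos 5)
Your proposal is correct and is essentially the paper's own argument: pair $\omega\wedge\tau_N$ against an arbitrary closed form of degree $\dim N-2$, use the Thom class property to reduce the integral to $\int_N j^*\omega\wedge j^*\gamma$, invoke isotropy $j^*\omega=0$, and conclude by nondegeneracy of the Poincar\'e pairing. Your explicit handling of the degenerate case $\dim N\leq 1$ is in fact cleaner than the paper's corresponding (apparently garbled) reduction ``we may assume $\mathrm{codim}\,N\leq 2$''.
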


\begin{proof}  Without the loss of generality, we may assume that $\text{codim }\, N\leq 2$.
Since $N$ is an isotropic submanifold of $M$,  $\omega\mid_N=0$.
Now for any close form $\beta$ of degree $ \text{dim}\,N -2$, we have

\[ \int_M \beta \wedge \omega \wedge \tau_N=\int_N \beta \wedge \omega =0.\]

It then follows from the Poincar\'{e} duality that $[\omega \wedge \tau_N]=0$.

\end{proof}





\begin{lemma}\label{Thom-class-of-general-submanifolds} Let $(M, \omega)$ be a $2n$-dimensional connected compact symplectic manifold, and let $\alpha$
be a symplectic Harmonic form of degree $k$. If $k$ is odd, or if $k=2p$ is even and
$L^{n-p}[\alpha]=0$, then we have the following Lefschetz decomposition of $\alpha$.
\begin{equation}\label{lefschetz-decompose-Thom-class} \alpha=\displaystyle \sum_{r\geq \text{max}(\frac{k-n}{2}, 0)} L^r \alpha_{k-2r}\end{equation}
where each $\alpha_{k-2r}$ is a closed primitive differential form of degree $k-2r>0$.
\end{lemma}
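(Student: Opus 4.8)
The plan is to read off the desired decomposition directly from the Lefschetz decomposition of $\alpha$ furnished by Lemma \ref{yan's-result}, and then to eliminate the degree-zero component in the even case using the hypothesis $L^{n-p}[\alpha]=0$. By Lemma \ref{yan's-result} the form $\alpha$ of degree $k$ has a unique decomposition $\alpha=\sum_{r\geq\max(\frac{k-n}{2},0)}\frac{L^r}{r!}\beta_{k-2r}$ with $\beta_{k-2r}$ primitive of degree $k-2r$, and since $\alpha$ is symplectic Harmonic, Lemma \ref{yan's-result-2} guarantees in addition that each $\beta_{k-2r}$ is $d$-closed. Setting $\alpha_{k-2r}=\frac{1}{r!}\beta_{k-2r}$ already produces the stated formula, so the only remaining issue is to ensure that every surviving component has \emph{positive} degree $k-2r>0$.

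When $k$ is odd this is immediate: each $k-2r$ is odd, hence at least $1$, so no degree-zero component can occur and the conclusion follows at once. When $k=2p$ is even the single dangerous term is the degree-zero piece $\beta_0$, which appears at $r=p$, and the crux is to show that $L^{n-p}[\alpha]=0$ forces $\beta_0=0$. First I would observe that $\beta_0$ is a $d$-closed $0$-form on the connected manifold $M$, hence constant. Next I would compute $L^{n-p}\alpha$ term by term: the contribution of $\beta_{2p-2r}$ is $\frac{1}{r!}L^{n-p+r}\beta_{2p-2r}$, and since $\beta_{2p-2r}$ is primitive of degree $d=2p-2r$, part 2 of Lemma \ref{yan's-result} gives $L^{n-d+1}\beta_{2p-2r}=0$. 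A short index comparison shows $n-p+r\geq n-d+1$ exactly when $r\leq p-1$, so every term with $r<p$ is annihilated and only the $r=p$ term survives, leaving $L^{n-p}\alpha=\frac{1}{p!}\beta_0\,\omega^n$.

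Finally, since $M$ is compact, connected, and canonically oriented by $\omega^n$, the top cohomology is detected by integration; thus $L^{n-p}[\alpha]=0$ yields $\int_M L^{n-p}\alpha=0$, which, because $\beta_0$ is constant, reads $\frac{\beta_0}{p!}\int_M\omega^n=0$. As $\int_M\omega^n\neq 0$, this forces $\beta_0=0$. Consequently, in the even case the degree-zero component drops out and every remaining $\alpha_{k-2r}$ has degree $k-2r\geq 2>0$, which completes the argument. I expect the one step demanding care to be the vanishing $L^{n-p+r}\beta_{2p-2r}=0$ for $r<p$, that is, the bookkeeping confirming that the primitivity bound $L^{n-d+1}\beta_d=0$ annihilates precisely the higher-degree pieces and isolates $\beta_0$; once this index computation is in hand, the remainder is a routine integration.
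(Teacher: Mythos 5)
Your proof is correct and follows essentially the same route as the paper: Lefschetz-decompose $\alpha$ via Lemmas \ref{yan's-result} and \ref{yan's-result-2}, dispose of the odd case by parity, and in the even case apply $L^{n-p}$ so that primitivity kills all terms with $r<p$, leaving $L^{n-p}\alpha$ a constant multiple of $\omega^n$ that must vanish. The only difference is cosmetic: the paper concludes $\alpha_0=0$ directly from the $d$-exactness of $\alpha_0\,\omega^n$, while you spell out the same point by integrating against the symplectic volume.
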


\begin{proof} Applying Lemma \ref{yan's-result-2}, we Lefschetz decompose $\alpha$ as follows.
\[ \alpha=\displaystyle \sum_{r\geq \text{max}(\frac{k-n}{2}, 0)} L^r \alpha_{k-2r},\]
where each $\alpha_{k-2r}$ is a closed primitive differential form of degree $k-2r$.
When $k$ is odd, Lemma \ref{Thom-class-of-general-submanifolds} follows automatically from dimension consideration.
Now assume that $k=2p$ is even and write
\begin{equation} \label{lefschetz-decompose-Thom-class-step2}
\alpha=\displaystyle\sum_{r=1}^p L^r \alpha_{2p-2r},\end{equation}
where $\alpha_{2p-2r}$ is a closed primitive differential form of degree $2p-2r$. It suffices to show that $\alpha_0=0$.
Indeed, since $\alpha_0$ is a closed form of degree zero, $\alpha_0$ must be a constant.

Apply $L^{n-p}$ to the both sides of Equation \ref{lefschetz-decompose-Thom-class-step2}. Since each differential form $\alpha_{2p-2r}$ is primitive, we get that
\[ L^{n-p}\alpha=\displaystyle\sum_{r=1}^p L^{n-p+r} \alpha_{2p-2r}=  \alpha_0\omega^n \]
must be $d$-exact. It follows that $\alpha_0=0$.
\end{proof}

The following result is an immediate consequence of Theorem \ref{main-result2} and Lemma \ref{Thom-class-of-general-submanifolds}.
\begin{theorem}\label{main-result4} Suppose that $(M,\omega)$ is a $2n$ dimensional connected compact symplectic manifold, and that $[\alpha] \in H^k(M,\R)$ is a De Rham cohomology class of degree $k$ which admits a symplectic Harmonic representative. Then $[\alpha]$ must admit a symplectic Harmonic representative not supported on the entire manifold, provided either one of the following two conditions are satisfied.
\begin{itemize}
\item[1)] $k$ is odd;
\item [2)] $k=2p$ is even, and $[\omega]^{n-p}\wedge [\alpha]=0$.
\end{itemize}

\end{theorem}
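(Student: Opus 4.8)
The plan is to reduce Theorem \ref{main-result4} directly to the two results it cites, Theorem \ref{main-result2} and Lemma \ref{Thom-class-of-general-submanifolds}, by showing that the Lefschetz decomposition forces $[\alpha]$ into a sum of primitive classes of positive degree, each of which already has a good Harmonic representative. First I would fix a symplectic Harmonic representative $\alpha$ of the given class $[\alpha]$, whose existence is part of the hypothesis. The key observation is that, under either condition (1) or (2), Lemma \ref{Thom-class-of-general-submanifolds} applies verbatim: when $k$ is odd there is nothing to check beyond the dimension count already carried out in that lemma, and when $k=2p$ is even the hypothesis $[\omega]^{n-p}\wedge[\alpha]=0$ is exactly the condition $L^{n-p}[\alpha]=0$ needed there. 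Hence in both cases $\alpha$ admits a Lefschetz decomposition
\[ \alpha=\sum_{r\geq \max(\frac{k-n}{2},0)} L^r\alpha_{k-2r}, \]
in which every primitive component $\alpha_{k-2r}$ is a \emph{closed} primitive form of \emph{strictly positive} degree $k-2r>0$.

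Next I would pass to cohomology. Since $d$ commutes with $L$ (Equation (\ref{commutator-on-forms})), taking cohomology classes gives $[\alpha]=\sum_r L^r[\alpha_{k-2r}]$, where each $[\alpha_{k-2r}]$ is a primitive cohomology class of positive degree. Now Theorem \ref{main-result2} applies to each such summand individually: the primitive class $[\alpha_{k-2r}]$ of positive degree is represented by a symplectic Harmonic form $\beta_{k-2r}$ that is not supported on the entire manifold, indeed supported in a small neighborhood of the union of the supports of a Lipschitz chain and a polyhedral chain. I would then form
\[ \gamma=\sum_{r\geq \max(\frac{k-n}{2},0)} L^r\beta_{k-2r}. \]
Because $L$ is multiplication by $\omega$, it preserves Harmonicity (more precisely, by Equation (\ref{commutator-on-forms}) $L$ commutes with both $d$ and $dd^{\Lambda}$, and wedging a closed primitive form with a power of $\omega$ yields a Harmonic form), so $\gamma$ is again symplectic Harmonic and represents $[\alpha]$.

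The main point to verify, and the only place any care is needed, is the support statement for $\gamma$. Multiplication by $\omega$ does not enlarge support, since $\text{supp}(\omega^r\wedge\beta_{k-2r})\subset\text{supp}\,\beta_{k-2r}$; therefore $\text{supp}\,\gamma\subset\bigcup_r\text{supp}\,\beta_{k-2r}$, which is contained in the finite union of the small neighborhoods supplied by Theorem \ref{main-result2}. As long as at least one summand is present — guaranteed by $k>0$, so the decomposition is nonempty with all components of positive degree — this union is a proper subset of the connected manifold $M$, and hence $\gamma$ is not supported everywhere. I expect the only genuine subtlety to be bookkeeping rather than mathematics: one must confirm that each neighborhood furnished by Theorem \ref{main-result2} can be taken proper (which it can, as each $\beta_{k-2r}$ is already supported on a set of Hausdorff dimension strictly below $2n$ by Proposition \ref{Lipschitz-chain}), so that their finite union remains a proper closed subset of $M$. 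This completes the reduction, and the theorem follows.
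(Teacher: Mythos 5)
Your proposal is correct and follows essentially the same route as the paper, which simply declares Theorem \ref{main-result4} an immediate consequence of Theorem \ref{main-result2} and Lemma \ref{Thom-class-of-general-submanifolds}: Lefschetz-decompose the Harmonic representative into closed primitive components of positive degree, replace each primitive class by its Harmonic representative from Theorem \ref{main-result2}, and reassemble with powers of $L$. Your extra care about the support bookkeeping (choosing the neighborhoods small enough, via the Hausdorff-dimension bound, so that the finitely many supports miss a common point of the connected manifold) is exactly the detail the paper leaves implicit, and you have verified it correctly.
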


Now we are ready to prove Theorem \ref{Guillemin's-question}.

\begin{proof} We first point out that if $\text{codim}\, N=2p$ is even, and if $\omega^{n-p}\wedge[\tau_N]\neq 0$, then by
\cite[Thm.1]{Ba06} any symplectic Harmonic representative of $[\tau_N]$ is nowhere vanishing on $M$. For a treatment available in the literature, we refer to \cite[Lemma 4.1]{TY09}.

 The other statements in Theorem \ref{Guillemin's-question} follows directly from Theorem \ref{main-result4} and Lemma \ref{cup-product}.
\end{proof}

\begin{remark}
In view of Mathieu's theorem \cite{Ma95}, the assumption that the Thom class of the submanifold admits a symplectic Harmonic representative is necessary. For instance, it is easy to use \cite[Prop. 2.3]{L04} to show
that there are examples of compact oriented isotropic submanifolds whose
Thom classes do not have any symplectic Harmonic representatives.

\end{remark}

\appendix
\section{ A note on primitive cohomology}

A $2n$ dimensional symplectic manifold $M$ is said to satisfy the Hard Lefschetz property if and only if for any $0\leq k\leq n$, the Lefschetz map
  \begin{equation}\label{Lefschetz-map} L^k: H^{n-k}(M)\rightarrow H^{n+k}(M)\, \,\,[\alpha]\mapsto [\omega^k\wedge \alpha]
  \end{equation} is an isomorphism. In this appendix, we show that for symplectic manifolds with the Hard Lefschetz property,  the definition of primitive cohomology introduced in \cite{TY09} is equivalent to the usual
  definition. We first recall the following definition of primitive cohomology group given in \cite{TY09}.

\begin{definition}\label{primitive-cohomology-yau} Let $(M,\omega)$ be a $2n$ dimensional symplectic manifold, and let $P^{'r}(M)$ be the space of primitive $r$-forms which are closed under the symplectic adjoint operator $d^{\Lambda}$. Define \[PH^r_d (M)= \dfrac{ \text{ker }\, d\cap P^{'r}(M)}{ d  P^{'r-1}(M)},\,\,\,1\leq r\leq n.\]\end{definition}

 For the reader's convenience, we also include here the usual notion of primitive cohomology on a symplectic manifold. For a projective K\"ahler manifold, it is exactly what is used by algebraic geometers, cf. \cite[p. 4]{V07}.

\begin{definition}\label{primitive1} Let $(M,\omega)$ be a $2n$ dimensional symplectic manifold. For any $0\leq r \leq n$, the $r$-th primitive cohomology group, $PH^{r}(M)$, is defined by
 \[   PH^r (M)= \text{ker}(L^{n-r+1}: H^r(M)\rightarrow H^{2n-r+2}(M)) .\]
\end{definition}

To show that Definition \ref{primitive-cohomology-yau} is equivalent to Definition \ref{primitive1}, we need the following symplectic $dd^{\Lambda}$-lemma, which was independently established by Merkulov \cite{Mer98} and Guillemin \cite{Gui01}
\begin{theorem}\label{symplectic-ddelta-GM}  Suppose that $M$ is a compact symplectic manifold with the Hard Lefschetz property. Then we have
\[ \text{ ker} d\cap \text{ im} d^{\Lambda} =\text{im}d \cap \text{ker}d^{\Lambda} =\text{im}dd^{\Lambda}.\]
\end{theorem}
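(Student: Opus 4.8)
The plan is to prove the three-way equality by first disposing of the formal inclusions, then using the symplectic star to cut the substantive work in half, and finally feeding in the Hard Lefschetz hypothesis to close the one remaining inclusion. First I would record the two trivial inclusions $\operatorname{im} dd^{\Lambda}\subseteq \ker d\cap\operatorname{im} d^{\Lambda}$ and $\operatorname{im} dd^{\Lambda}\subseteq\operatorname{im} d\cap\ker d^{\Lambda}$. These follow at once from $d^{2}=0$, $(d^{\Lambda})^{2}=0$, and the anticommutation relation $dd^{\Lambda}+d^{\Lambda}d=0$: if $x=dd^{\Lambda}y$ then $dx=d^{2}d^{\Lambda}y=0$ and $x=-d^{\Lambda}(dy)\in\operatorname{im} d^{\Lambda}$, while $d^{\Lambda}x=-d(d^{\Lambda})^{2}y=0$ and $x=d(d^{\Lambda}y)\in\operatorname{im} d$. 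Thus $\operatorname{im} dd^{\Lambda}$ lies in both middle terms, and the entire content of the theorem is the reverse inclusions.

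Next I would exploit the symplectic Hodge star to reduce the two reverse inclusions to a single one. On a $2n$-dimensional symplectic manifold $\star^{2}=\operatorname{id}$, and the defining formula $d^{\Lambda}=(-1)^{k+1}\star d\star$ shows that $\star$ conjugates $d$ into $d^{\Lambda}$ up to sign, so that $\star d^{\Lambda}=\pm d\star$ and $d^{\Lambda}\star=\pm\star d$. Consequently $\star$ carries $\ker d$ onto $\ker d^{\Lambda}$, carries $\operatorname{im} d^{\Lambda}$ onto $\operatorname{im} d$, and maps $\operatorname{im} dd^{\Lambda}$ into itself; being an involution, it therefore interchanges the two sets $\ker d\cap\operatorname{im} d^{\Lambda}$ and $\operatorname{im} d\cap\ker d^{\Lambda}$ while fixing $\operatorname{im} dd^{\Lambda}$. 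Hence it suffices to prove the single inclusion $\operatorname{im} d\cap\ker d^{\Lambda}\subseteq\operatorname{im} dd^{\Lambda}$. Any $x$ in this set is automatically symplectic Harmonic, being $d$-exact (hence $d$-closed) and $d^{\Lambda}$-closed, so the goal becomes: every $d$-exact symplectic Harmonic form is $dd^{\Lambda}$-exact.

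The heart of the proof is this last implication, and it is the only place the Hard Lefschetz property is used. My plan is to inject Hard Lefschetz through the Mathieu--Yan theorem (\cite{Ma95}, \cite{Yan96}), that under Hard Lefschetz every de Rham class admits a symplectic Harmonic representative, and to combine this with the $sl_{2}$-module structure of $\Omega(M)$ recorded in (\ref{sl2-module-on-forms}). Given a $d$-exact Harmonic $x$ of degree $k$, I would pass to its Lefschetz decomposition via Lemma \ref{yan's-result-2}, writing $x=\sum_{r}\frac{L^{r}}{r!}\beta_{k-2r}$ with each primitive component $\beta_{k-2r}$ closed and primitive. Since $L$ commutes with $d$ and since $dd^{\Lambda}$ commutes with both $L$ and $\Lambda$ by (\ref{commutator-on-forms}), the problem is transported to the primitive components, where it becomes a statement about primitive cohomology; Hard Lefschetz, equivalently the degeneration of the Lefschetz (weight) filtration spectral sequence of $(\Omega(M),d)$, then forces the required $dd^{\Lambda}$-exactness. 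This is the Fr\"olicher-type mechanism underlying the equivalence ``Hard Lefschetz $\Leftrightarrow dd^{\Lambda}$-lemma.''

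I expect this final step to be the main obstacle. Because the symplectic ``Laplacian'' $dd^{\Lambda}+d^{\Lambda}d$ vanishes identically, there is no elliptic operator and no orthogonal Hodge decomposition to rely on, so none of the Riemannian or K\"ahler Hodge-theoretic shortcuts apply; symplectic Harmonicity is a soft, non-elliptic condition. The whole burden is therefore to convert the global topological Hard Lefschetz hypothesis into the local algebraic $dd^{\Lambda}$-statement, and the delicate point is to control the de Rham class of a co-exact Harmonic form: one must show that a $d$-closed, $d^{\Lambda}$-exact form is not merely $d$-exact but in fact $dd^{\Lambda}$-exact. This upgrade is genuinely equivalent to Hard Lefschetz and cannot be weakened, which is precisely why it carries the full weight of the argument.
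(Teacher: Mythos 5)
A preliminary remark: the paper does not prove Theorem \ref{symplectic-ddelta-GM} at all --- it quotes it as a result established independently by Merkulov \cite{Mer98} and Guillemin \cite{Gui01} --- so your attempt has to stand entirely on its own. Your reductions do stand. The inclusions $\operatorname{im} dd^{\Lambda}\subseteq \ker d\cap\operatorname{im} d^{\Lambda}$ and $\operatorname{im} dd^{\Lambda}\subseteq \operatorname{im} d\cap\ker d^{\Lambda}$ follow correctly from $d^{2}=(d^{\Lambda})^{2}=0$ and $dd^{\Lambda}=-d^{\Lambda}d$, and since $\star^{2}=\mathrm{id}$ with $d\star\alpha_{k}=(-1)^{k+1}\star d^{\Lambda}\alpha_{k}$, the star does interchange $\ker d\cap\operatorname{im} d^{\Lambda}$ with $\operatorname{im} d\cap\ker d^{\Lambda}$ and preserves $\operatorname{im} dd^{\Lambda}$, so reducing to the single inclusion $\operatorname{im} d\cap\ker d^{\Lambda}\subseteq\operatorname{im} dd^{\Lambda}$ is legitimate.

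The heart of the proof, however, is missing, and where it should be the argument is circular. After Lefschetz-decomposing a $d$-exact Harmonic form $x=\sum_{r}\frac{L^{r}}{r!}\beta_{k-2r}$, two things remain to be shown: first, that each primitive component $\beta_{k-2r}$ is itself $d$-exact --- exactness of $x$ does not distribute over the components at the form level; you need $[x]=0$ together with the injectivity of $L^{r}$ on $H^{k-2r}(M)$, i.e.\ Hard Lefschetz applied at the cohomological level, and this is never stated; second, and crucially, that a $d$-exact closed primitive form is $dd^{\Lambda}$-exact. For this second step you offer only the appeal to ``the Fr\"olicher-type mechanism underlying the equivalence Hard Lefschetz $\Leftrightarrow dd^{\Lambda}$-lemma'' --- but that equivalence \emph{is} the theorem being proved, so invoking it assumes the conclusion; the degeneration statement you cite is precisely the nontrivial content (the symplectic analogue of the Deligne--Griffiths--Morgan--Sullivan formality mechanism), and nowhere do you construct a potential $\gamma$ with $x=dd^{\Lambda}\gamma$. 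The Mathieu--Yan theorem \cite{Ma95}, \cite{Yan96} is mentioned but never actually used. To close the argument you would need to make it do real work: for instance, given a $d$-exact primitive $\beta=d\mu$, use Mathieu's theorem and the Lefschetz decomposition of $\mu$ to reduce to primitive potentials, then run a descending induction on the Lefschetz filtration using $[d^{\Lambda},L]=d$ and $[d,\Lambda]=d^{\Lambda}$ from Equation (\ref{commutator-on-forms}) to manufacture $dd^{\Lambda}$-potentials degree by degree (essentially Guillemin's route \cite{Gui01}), or dualize via $\star$ as Merkulov does \cite{Mer98}. As written, your final two paragraphs correctly identify the main obstacle but then only name it; the theorem is not proved.
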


For any $0\leq k\leq n$, denote by $P^k(M)$ the space of primitive $k$-forms on $M$. The next simple result is an easy extension of the symplectic $dd^{\Lambda}$-lemma to primitive differential forms.

\begin{lemma}\label{primitive-ddelta}(\cite{TY10}) Let $\alpha \in P^k(M)$ with $0\leq k\leq n$. Suppose that there exists a $k$-form $\gamma$ such that $\alpha=dd^{\Lambda}\gamma$. Then there exists a primitive $k$-form $\beta$
such that $\alpha=dd^{\Lambda}\beta$.
\end{lemma}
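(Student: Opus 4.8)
The plan is to prove Lemma~\ref{primitive-ddelta} by first solving the equation $\alpha=dd^\Lambda\gamma$ with an \emph{arbitrary} $k$-form $\gamma$ (which is the hypothesis), and then correcting $\gamma$ to a primitive form using Lemma~\ref{yan's-result} together with the symplectic $dd^\Lambda$-lemma (Theorem~\ref{symplectic-ddelta-GM}). The key observation is that $\alpha=dd^\Lambda\gamma$ already exhibits $\alpha$ as an element of $\im\,dd^\Lambda$, so by Theorem~\ref{symplectic-ddelta-GM} we have $\alpha\in\ker d\cap\im d^\Lambda=\im d\cap\ker d^\Lambda$; in particular $\alpha$ is $d$-closed, $d^\Lambda$-closed, and $d$-exact. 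The task is therefore to upgrade the \emph{potential} $\gamma$ from a general form to a primitive one while preserving the relation $\alpha=dd^\Lambda\gamma$.

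First I would take the Lefschetz decomposition $\gamma=\sum_{r\ge 0}\frac{L^r}{r!}\gamma_{k-2r}$ of $\gamma$ as in Equation~(\ref{lefschetz-decompose-forms}), where each $\gamma_{k-2r}\in P^{k-2r}(M)$ is primitive. The strategy is to apply $dd^\Lambda$ to this decomposition and use the commutator relations in Equation~(\ref{commutator-on-forms}) --- specifically $[dd^\Lambda,L]=0$, which means $dd^\Lambda$ commutes with every power of $L$. Consequently,
\[
\alpha=dd^\Lambda\gamma=\sum_{r\ge 0}\frac{L^r}{r!}\,dd^\Lambda\gamma_{k-2r}.
\]
Now each $dd^\Lambda\gamma_{k-2r}$ is again primitive: indeed, since $dd^\Lambda$ also commutes with $\Lambda$ (the relation $[dd^\Lambda,\Lambda]=0$), and $\gamma_{k-2r}$ is primitive, i.e. $\Lambda\gamma_{k-2r}=0$ by Lemma~\ref{yan's-result}, we get $\Lambda(dd^\Lambda\gamma_{k-2r})=dd^\Lambda(\Lambda\gamma_{k-2r})=0$, so $dd^\Lambda\gamma_{k-2r}$ is primitive of degree $k-2r$. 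This means the displayed expression for $\alpha$ is itself a Lefschetz decomposition of $\alpha$ into primitive components.

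The final step invokes the uniqueness of the Lefschetz decomposition from Lemma~\ref{yan's-result}(3). Since $\alpha$ is assumed primitive of degree $k$, its Lefschetz decomposition has only the single top term $\alpha$ with no lower primitive components; equivalently, comparing the unique primitive pieces, all terms with $r\ge 1$ must vanish and $\alpha=dd^\Lambda\gamma_k$, where $\beta:=\gamma_k\in P^k(M)$ is the degree-$k$ primitive part of $\gamma$. This exhibits $\alpha=dd^\Lambda\beta$ with $\beta$ primitive, which is exactly the assertion.

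The main obstacle, and the step requiring the most care, is verifying that the primitive components $dd^\Lambda\gamma_{k-2r}$ genuinely sit in the correct degree so that uniqueness of the Lefschetz decomposition can be applied cleanly; one must confirm that $dd^\Lambda$ preserves form degree (it does, since $d$ raises degree by one and $d^\Lambda$ lowers it by one) and that primitivity is preserved under $dd^\Lambda$ via the commutator $[dd^\Lambda,\Lambda]=0$. Provided these commutation facts from Equation~(\ref{commutator-on-forms}) are applied correctly, the argument is essentially a bookkeeping exercise in the $sl_2$-representation theory, and no hard analysis is needed beyond the already-cited symplectic $dd^\Lambda$-lemma.
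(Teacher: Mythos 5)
Your argument is correct and is essentially identical to the paper's own proof: Lefschetz decompose $\gamma$, push $dd^{\Lambda}$ through the powers of $L$ via $[dd^{\Lambda},L]=0$, observe via $[dd^{\Lambda},\Lambda]=0$ that each piece stays primitive, and conclude by uniqueness of the Lefschetz decomposition that $\alpha=dd^{\Lambda}\gamma_k$. The only cosmetic point is that your opening appeal to Theorem~\ref{symplectic-ddelta-GM} is superfluous --- the hypothesis already hands you a potential $\gamma$, and the actual argument (like the paper's) never uses the $dd^{\Lambda}$-lemma.
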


\begin{proof} Lefschetz decompose $\gamma$ as follows
\[ \gamma= \beta_k+L\beta_{k-2}+L^2\beta_{k-4}+\cdots.\]
Here $\beta_{k-2i}$ is a primitive differential form of degree $k-2i$, $i=0,1,2,\cdots$. Since $[dd^{\Lambda},L]=0$, we get that
\begin{equation}\label{primitive-decom-and-ddelta} \alpha=dd^{\Lambda}\beta_k+Ldd^{\Lambda}\beta_{k-2}+L^2dd^{\Lambda}\beta_{k-4}+\cdots.\end{equation}
Since $[dd^{\Lambda}, \Lambda]=0$, the differential operator $dd^{\Lambda}$ maps primitive forms to primitive forms.
Therefore the right hand side of Equation \ref{primitive-decom-and-ddelta} is the Lefschetz decomposition of $\alpha$. Since $\alpha$ itself is a primitive form, it follows from the uniqueness of the Lefschetz decomposition that \[\alpha= dd^{\Lambda}\beta_k.\]
This completes the proof.

\end{proof}

\begin{proposition}\label{formality-1} Suppose that $M$ is a compact $2n$ dimensional symplectic manifold with the Hard Lefschetz property.
Then for any $0\leq r\leq n$,\[ PH^r_d(M) \cong PH^r(M).\]

\end{proposition}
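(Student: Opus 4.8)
The plan is to build the natural comparison map $\Phi\colon PH^r_d(M)\to PH^r(M)$ sending the class of a primitive, $d$-closed, $d^{\Lambda}$-closed form $\alpha$ to its ordinary de Rham class $[\alpha]$, and then to verify that it is well defined, surjective, and injective. First I would record that the quotient in Definition \ref{primitive-cohomology-yau} is sensible: if $\beta\in P^{'r-1}(M)$, i.e.\ $\beta$ is primitive with $d^{\Lambda}\beta=0$, then the relation $[d,\Lambda]=d^{\Lambda}$ from Equation (\ref{commutator-on-forms}) gives $\Lambda d\beta=d\Lambda\beta-d^{\Lambda}\beta=0$, so $d\beta$ is again primitive by Lemma \ref{yan's-result}(2), and it is $d^{\Lambda}$-closed since $d$ and $d^{\Lambda}$ anticommute; thus $dP^{'r-1}(M)\subset\ker d\cap P^{'r}(M)$. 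For $\Phi$ itself, any $\alpha\in\ker d\cap P^{'r}(M)$ is primitive and closed, so $\omega^{n-r+1}\wedge\alpha=0$ at the level of forms and hence $[\alpha]\in PH^r(M)$ in the sense of Definition \ref{primitive1}; since $\Phi$ obviously kills $dP^{'r-1}(M)$, it descends to the quotient.

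For surjectivity I would invoke \cite{Yan96} (see Lemma \ref{yan's-result}): every primitive cohomology class $c\in PH^r(M)$ has a symplectic Harmonic representative $\alpha$. Writing its Lefschetz decomposition $\alpha=\sum_{s}L^{s}\alpha_{r-2s}$ as in Lemma \ref{yan's-result-2}, each primitive component $\alpha_{r-2s}$ is $d$-closed, so passing to cohomology gives $c=\sum_s L^s[\alpha_{r-2s}]$ with every $[\alpha_{r-2s}]$ a primitive class. This is exactly the point where the Hard Lefschetz hypothesis is used: it makes the Lefschetz decomposition of $H^r(M)$ a direct sum, hence unique, so the primitivity of $c$ forces $[\alpha_{r-2s}]=0$ for $s\geq1$ and $c=[\alpha_r]$. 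The top component $\alpha_r$ is primitive and $d$-closed, whence $d^{\Lambda}\alpha_r=d\Lambda\alpha_r-\Lambda d\alpha_r=0$; thus $\alpha_r\in\ker d\cap P^{'r}(M)$ and its class maps to $c$.

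The main obstacle will be injectivity, where the two $dd^{\Lambda}$-lemmas carry the argument. Suppose $\alpha\in\ker d\cap P^{'r}(M)$ with $[\alpha]=0$, so $\alpha\in\im d$; being also $d^{\Lambda}$-closed, $\alpha\in\im d\cap\ker d^{\Lambda}$. The symplectic $dd^{\Lambda}$-lemma (Theorem \ref{symplectic-ddelta-GM}) then yields $\alpha=dd^{\Lambda}\eta$, and since $dd^{\Lambda}$ preserves degree, $\eta$ may be taken of degree $r$. As $\alpha$ is primitive, Lemma \ref{primitive-ddelta} improves $\eta$ to a primitive $r$-form $\beta'$ with $\alpha=dd^{\Lambda}\beta'$. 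Setting $\beta=d^{\Lambda}\beta'$ gives $\alpha=d\beta$, and it remains to verify $\beta\in P^{'r-1}(M)$: it is $d^{\Lambda}$-closed because $(d^{\Lambda})^2=0$, and it is primitive because the relation $[d^{\Lambda},\Lambda]=0$ gives $\Lambda\beta=\Lambda d^{\Lambda}\beta'=d^{\Lambda}\Lambda\beta'=0$. Hence $\alpha\in dP^{'r-1}(M)$ and its class in $PH^r_d(M)$ vanishes, which proves injectivity and establishes $PH^r_d(M)\cong PH^r(M)$. (For $r=0$ both sides reduce to $H^0(M)$, so the statement is trivial in that degree.)
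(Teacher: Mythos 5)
Your proof is correct, and half of it diverges from the paper's in an interesting way. Your injectivity argument is essentially the paper's own: both pass from $\alpha\in\operatorname{im}d\cap\ker d^{\Lambda}$ to $\alpha=dd^{\Lambda}\gamma$ via Theorem \ref{symplectic-ddelta-GM}, upgrade $\gamma$ to a primitive form by Lemma \ref{primitive-ddelta}, and check that $d^{\Lambda}\gamma$ lies in $P^{'r-1}(M)$ using $[d^{\Lambda},\Lambda]=0$ and $(d^{\Lambda})^2=0$. For surjectivity, however, you take a genuinely different route. The paper works entirely at the level of forms and does not use the Hard Lefschetz hypothesis at this step: given a primitive class $[\alpha]$, it writes $L^{n-r+1}\alpha=d\beta_{2n-r+1}$, uses the form-level isomorphism of Lemma \ref{yan's-result}(1) to solve $\beta_{2n-r+1}=L^{n-r+1}\eta$, and concludes that $\alpha-d\eta$ is a closed primitive (hence $d^{\Lambda}$-closed) representative; this shows surjectivity of the comparison map on any symplectic manifold, with Hard Lefschetz needed only for injectivity. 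You instead invoke Yan's theorem that every primitive class has a symplectic Harmonic representative, apply Lemma \ref{yan's-result-2} to get closed primitive Lefschetz components, and then use Hard Lefschetz a second time, through the uniqueness of the induced Lefschetz decomposition of $H^r(M)$, to kill the components with $s\geq 1$. This is correct, but note it rests on two inputs the paper's argument avoids: Yan's Harmonic-representative theorem, and the fact that Hard Lefschetz yields a direct-sum decomposition $H^r(M)=\bigoplus_{s}L^{s}PH^{r-2s}(M)$ with $L^{s}$ injective on $PH^{r-2s}(M)$ — a standard consequence of the isomorphisms (\ref{Lefschetz-map}), but one you assert without justification and should at least sketch or cite. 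Your preliminary verification that $dP^{'r-1}(M)\subset\ker d\cap P^{'r}(M)$, which the paper leaves implicit, is a welcome addition.
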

\begin{proof} First we observe that on any symplectic manifold $M$, for any $0\leq r\leq n$, there is a natural homomorphism
\begin{equation}\label{natural-homomorphism} PH^r_d(M) \rightarrow PH^r(M),\,[\alpha]_{PH_d}\mapsto [\alpha]_{PH}.\end{equation}
Assume that $M$ is compact and satisfies the Hard Lefschetz property. We need to prove that this homomorphism is an isomorphism. When $r=0$, this is trivially true. We may assume that $r>0$.
Suppose that $\alpha$ is a closed form in $ P^{r'}(M)$ such that $[\alpha]_{PH}=0$. Since by definition $\alpha$ is $d^{\Lambda}$-closed,
 $\alpha$ is both $d$-exact and $d^{\Lambda}$-closed. It follows from Theorem \ref{symplectic-ddelta-GM} that $\alpha= dd^{\Lambda}\gamma$ for some $r$-form $\gamma$.
Since $\alpha$ is primitive, by Lemma \ref{primitive-ddelta}, we can assume that $\gamma$ is a primitive $k$-form.
Since $[d^{\Lambda},\Lambda]=0$ and since $\gamma$ is a primitive form, $d^{\Lambda}\gamma$ is also a primitive form.
It follows that $[\alpha]_{PH_d}=0$. This proves that the homomorphism (\ref{natural-homomorphism}) is injective.

If $[\alpha]_{PH}\in PH^r(M)$, then by definition $L^{n-r+1}\alpha$ represents a trivial cohomology class in $H^{2n-r+2}(M)$. Thus $L^{n-r+1}\alpha=d\beta_{2n-r+1}$ for a $(2n-r+1)$-form $\beta_{2n-r+1}$. It then follows from Lemma \ref{yan's-result} that $\beta_{2n-r+1}=L^{n-r+1}\eta$ for a $(r-1)$-form $\eta$.
Thus $L^{n-r+1}(\alpha-d\eta)=0$. Note that $\alpha-d\eta$ is both $d$-closed and primitive. So it must be $d^{\Lambda}$-closed as well.
Therefore $\alpha-d\eta$ represents a cohomology class in $PH^r_d(M)$ whose image under the homomorphism (\ref{natural-homomorphism}) is
$[\alpha]_{PH}$. This proves that the homomorphism (\ref{natural-homomorphism}) is also surjective.

\end{proof}






\medskip \noindent
\vskip 1.5cm

{\bf Acknowledgements.}
\medskip \noindent
\vskip 0.5cm
This paper has its origin in a joint project with Victor Guillemin. I am very grateful to him for inviting me to visit him at MIT, for suggesting to me the question of symplectic Harmonic Thom forms, and for generously sharing with me many of his insights on the subject. I would like to thank L. Tseng for very useful conversations when I was visiting MIT, and for his interest in this work. I would like to thank Reyer Sjamaar, Zuoqing Wang and Xiangdong Xie for a number of helpful discussions on this work, and for their friendship and moral support. Last but not least, I want to thank my entire family for being there for me. The most substantial part of this work is completed in the first seven months after my son Brian was born, while my daughter Catherine is growing into a two year old toddler. My wife Yishu has been a constant source of support in this endeavor.

\medskip

\noindent
Yi Lin \\
Department of Mathematical Sciences \\
Georgia Southern University\\
203 Georgia Ave., Statesboro, GA, 30460 \\
{\em E\--mail}: yilin@georgiasouthern.edu

\noindent
\noindent

\end{document}